\newtheorem{theorem}{Theorem}[section]
\newtheorem{lemma}[theorem]{Lemma}
\newtheorem{proposition}[theorem]{Proposition}
\newtheorem{remar}[theorem]{Remark}
\theoremstyle{definition}
\newenvironment{remark}{\begin{remar}\rm}{\end{remar}}
\numberwithin{equation}{section}
\newenvironment{axiom}{\begin{list}{$\bullet$}{\setlength{\labelsep}{.3cm}%
\setlength{\leftmargin}{1.4cm}\setlength{\rightmargin}{0cm}%
\setlength{\labelwidth}{1cm}\setlength{\itemsep}{0pt}}}{\end{list}}
\newcommand{\ax}[1]{\item[{\bf #1}\hfill]\index{#1}}
\newcommand{\bfind}[1]{{\bf #1}}
\newcommand{\n}{\par\noindent}
\newcommand{\sn}{\par\smallskip\noindent}
\newcommand{\mn}{\par\medskip\noindent}
\newcommand{\pars}{\par\smallskip}
\newcommand{\parm}{\par\medskip}
\newcommand{\parb}{\par\bigskip}
\newcommand{\sep}{^{\rm sep}}
\newcommand{\chara}{\mbox{\rm char}\,}
\newcommand{\trdeg}{\mbox{\rm trdeg}\,}
\newcommand{\cal}{\mathcal}
\newcommand{\N}{\mathbb{N}}
\newcommand{\Z}{\mathbb{Z}}
\newcommand{\cT}{\mathcal{T}}
\begin{document}

\title[Henselian Rationality]{Elimination of Ramification II: Henselian Rationality}

\author{Franz-Viktor Kuhlmann}
\address{Institute of Mathematics, University of Szczecin, ul. Wielkopolska 15, 	  	  	
70-451 Szczecin, Poland}
\email{fvk@usz.edu.pl}
\thanks{I thank the referee for his very careful reading of the article and his many useful corrections and
suggestions. Further, I wish to thank Peter Roquette for his invaluable help and
support. I also thank F.~Delon, B.~Green, H.~Knaf, F.~Pop and A.~Prestel for
inspiring discussions and suggestions.\\
I am presently supported by a Polish Opus grant 2017/25/B/ST1/01815. During the earlier work on this paper,
I was partially supported by a Canadian NSERC grant and held
visiting professor positions at the University of Silesia in Katowice and the University of Szczecin,
Poland. I wish to thank the faculties of mathematics of these universities for their hospitality.
}

\subjclass[2000]{Primary 12J10, 13A18; Secondary 12L12, 14B05.}
\date{25. 1. 2019}

\begin{abstract}
We prove in arbitrary characteristic that an immediate valued algebraic function
field $F$ of transcendence degree 1 over a tame field $K$ is contained
in the henselization of $K(x)$ for a suitably chosen $x\in F$. This
eliminates ramification in such valued function fields. We give
generalizations of this result, relaxing the assumption on $K$. Our
theorems have important applications to local uniformization and to the
model theory of valued fields in positive and mixed characteristic.
\end{abstract}

\maketitle

%
%
\section{Introduction}
%
%
%
\subsection{The Main Theorem}
In this paper, we prove a structure theorem for a special sort of valued
function fields, which complements our ``Generalized Stability Theorem''
proved in \cite{[K8]} and has important applications to local uniformization and
the model theory of valued fields, which we will discuss below. By
``function field'' we will always mean ``algebraic function field''.

By $(K,v)$ we denote a field $K$ equipped with a (Krull) valuation $v$.
We write a valuation in the classical additive way, that is, the value
group, denoted by $vK$, is an additively written ordered abelian group,
and the ultrametric triangle law reads as $v(a+b)\geq\min \{va,vb\}$.
We denote the valuation ring by ${\cal O}_K$ and its maximal ideal by
${\cal M}_K\,$, the residue field by $Kv$, by $va$ the value of an element $a\in K$, and by $av$ its residue.
When we talk of a valued field extension $(L|K,v)$ we mean that $(L,v)$
is a valued field, $L|K$ a field extension, and $K$ is endowed with the restriction of $v$.
An extension $(L|K,v)$ of valued fields is called \bfind{immediate} if
the canonical embeddings $vK\hookrightarrow vL$ and $Kv\hookrightarrow Lv$ are onto. A valued field $(K,v)$
is \bfind{henselian} if it satisfies Hensel's Lemma, or equivalently, if the extension of $v$
to the algebraic closure $\tilde{K}$ of $K$ is unique. A \bfind{henselization} of $(K,v)$ is a minimal henselian
extension of $(K,v)$, in the sense that it admits a unique valuation preserving embedding over $K$ in
every other henselian extension of $(K,v)$. In particular, if $w$ is any extension of $v$
to $\tilde{K}$, then $(K,v)$ has a unique henselization in
$(\tilde{K},w)$. Henselizations of $(K,v)$ are unique up to valuation preserving
isomorphism over $K$; therefore, we will always speak of {\it the} henselization of $(K,v)$, and denote it by
$(K^h,v)$. Note that $(K^h|K,v)$ is always an immediate separable-algebraic extension.

Throughout the paper, when dealing with a valued function field $(F|K,v)$ we will assume that $v$ is extended to
$\tilde{F}$, and this extension will again be denoted by $v$. Then all subfields of
$(\tilde{F},v)$ have a unique henselization within $(\tilde{F},v)$.

\pars
An algebraic extension $(L|K,v)$ of a henselian field $(K,v)$ is called \bfind{tame} if every finite subextension
$E|K$ of $L|K$ satisfies the following conditions:
\begin{axiom}
\ax{(TE1)} The ramification index $(vE:vK)$ is not divisible by $\chara Kv$,
\ax{(TE2)} The residue field extension $Ev|Kv$ is separable,
\ax{(TE3)} The extension $(E|K,v)$ is \bfind{defectless}, i.e.,
$[E:K]=(vE:vK)[Ev:Kv]$.
\end{axiom}
\begin{remark}
This notion of ``tame extension'' does not coincide with the notion of
``tamely ramified extension'' as defined in \cite{[End]}, page 180.
The latter definition requires (TE1) and (TE2), but not (TE3). Our tame
extensions are the defectless tamely ramified extensions in the sense of
\cite{[End]}. In particular, in our terminology, proper immediate algebraic
extensions of henselian fields are not called tame, because they cause a
lot of problems for local uniformization and in the model theory of
valued fields.
\end{remark}

For later use we note that in the situation of the above definition, the Lemma of Ostrowski (cf.\
\cite[Th\'eor\`eme 2, p.~236]{[R]} or \cite[Corollary to Theorem~25, Section G, p.~78]{[Z--S]}) states that
the quotient $[E:K]/(vE:vK)[Ev:Kv]$ is a power of the residue characteristic $\chara Kv$ if this is positive,
and equal to $1$ otherwise.

\pars
A \bfind{tame field} is a henselian field for which all algebraic
extensions are tame. Likewise, a \bfind{separably tame valued field} is a
henselian field for which all separable-algebraic extensions are tame.
All henselian fields of residue characteristic $0$ and all algebraically
maximal Kaplansky fields are tame fields (but not every tame field is a Kaplansky field); see \cite{[K7]}
for details. A valued field is called \bfind{algebraically maximal} (or \bfind{separable-algebraically
maximal}) if it does not admit proper immediate algebraic (or separable-algebraic, respectively) extensions.
Since the henselization of a valued field is an immediate separable-algebraic extension, it follows that every
separable-algebraically maximal valued field is henselian.

Take a henselian field $(K,v)$ and extend $v$ to $\tilde{K}$. Denote by $K\sep$ the separable-algebraic
closure of $K$. The \bfind{absolute
ramification field of $(K,v)$} is the ramification field of the normal extension $(K\sep|K,v)$. It is the unique
maximal tame extension of $(K,v)$ by \cite[Theorem (22.7)]{[End]} (see also \cite[Proposition 4.1]{[K--P--R]}).
Hence a henselian field is tame if and only if its absolute ramification field is already algebraically closed;
in particular, every tame field is perfect. Likewise, a henselian field is separably tame
if and only if its absolute ramification field is separable-algebraically closed. Further, every tame field is
algebraically maximal and every separably tame field is separable-algebraically maximal because by (TE3),
$(vE:vK)=[Ev:Kv]=1$ implies $[E:K]=1$.

\pars
An extension $(F|K,v)$ of valued fields will be called \bfind{henselian
rational} if it admits a transcendence basis $\cT\subset F$ such that
$F$ lies in the henselization of the rational function field $K(\cT)$, or in other words, any henselization of
$(F,v)$ is also a henselization of $(K(\cT),v)$. The basic version of our main theorem is:
\begin{theorem}                \label{stt3}
Let $(K,v)$ be a tame field and $(F|K,v)$ an immediate function field. If
its transcendence degree is 1, then $(F|K,v)$ is henselian rational. In the general case of transcendence
degree $\geq 1$, given any immediate extension $(N,v)$ of $(F,v)$ which is a tame field, there is a finite immediate
extension $(F_1,v)$ of $(F,v)$ within $(N,v)$ such that $(F_1|K,v)$ is henselian rational.
\end{theorem}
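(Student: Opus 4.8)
The plan is to induct on the transcendence degree $n=\trdeg(F|K)$, treating the transcendence degree~$1$ assertion as the base case; this base case carries the essential content, while the general case is a bootstrap from it. Underlying both is the following reduction. If $x\in F$ is transcendental over $K$, then $K(x)$ is a subextension of the immediate extension $F|K$ and is therefore itself immediate over $K$; hence $vK(x)^h=vK$ and $K(x)^hv=Kv$, while also $vF^h=vK$ and $F^hv=Kv$. Thus the finite extension $F^h|K(x)^h$ has trivial value-group index and residue degree, so by (TE3) it is trivial precisely when it is defectless. In other words, for a fixed transcendental $x$ the inclusion $F\subseteq K(x)^h$ is equivalent to the vanishing of the defect of $F^h|K(x)^h$, and henselian rationality in transcendence degree~$1$ amounts to choosing $x$ so as to annihilate this defect.

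For the base case $n=1$ I note first that, $K$ being tame and hence algebraically maximal, every element of $F$ algebraic over $K$ already lies in $K$, so all of the immediacy of $F|K$ is of transcendental origin. Fix any transcendental $x_0\in F$. By Ostrowski's Lemma the defect $[F^h:K(x_0)^h]$ is a power of $p=\chara Kv$, and it equals~$1$ when the residue characteristic is $0$; this disposes of the residue characteristic $0$ case immediately. In residue characteristic $p>0$ the task is to improve the generator. Here I would use the representation of the immediate extension $F^h|K$ by pseudo--Cauchy sequences: the defect of $F^h|K(x_0)^h$ is carried by a pseudo--Cauchy sequence that fails to converge in $K(x_0)^h$, and, since $K$ is algebraically maximal, every pseudo--Cauchy sequence of algebraic type over $K$ converges in $K$. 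Consequently the missing convergence is of transcendental type and can be absorbed into the transcendental itself; replacing $x_0$ by a transcendental $x$ that realizes this limit more accurately lowers the defect, and one aims to reach defect~$1$, i.e.\ $F^h=K(x)^h$. Proving that a single admissible choice of $x$ drives the defect down to~$1$ -- and that the successive improvements terminate -- is the main obstacle of the theorem, and it is precisely here that the full strength of tameness of $K$ is consumed.

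For the inductive step assume $n\geq 2$ and that the theorem holds in transcendence degree $<n$ over an \emph{arbitrary} tame base (the base field is nowhere required to be finitely generated). Choose a transcendental $x_1\in F$ and let $L_1:=N\cap\widetilde{K(x_1)}$ be the relative algebraic closure of $K(x_1)$ in $N$. I claim $L_1$ is again a tame field. It is henselian, since its henselization lies in the henselian field $N$ and is algebraic over $L_1$, hence coincides with $L_1$; it is perfect, being the intersection of the perfect fields $N$ and $\widetilde{K(x_1)}$; and $L_1|K$ is immediate, so $L_1v=Kv$ is perfect and $vL_1=vK$ is $p$-divisible. Now let $L_1'|L_1$ be finite; it is separable because $L_1$ is perfect, so $L_1'$ is linearly disjoint from $N$ over $L_1$ and $N':=N\cdot L_1'$ is a finite extension of $N$ with $[N':N]=[L_1':L_1]$. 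As $N$ is a tame field, $N'|N$ is tame, hence defectless. Using that $N|L_1$ is immediate, the approximation property of immediate extensions shows that $N'|L_1'$ is again immediate; comparing value groups and residue fields along the towers $L_1\subseteq L_1'\subseteq N'$ and $L_1\subseteq N\subseteq N'$ then gives $(vN':vN)=(vL_1':vL_1)$ and $[N'v:Nv]=[L_1'v:L_1v]$, whence $[L_1':L_1]=[N':N]=(vL_1':vL_1)[L_1'v:L_1v]$, so $L_1'|L_1$ is defectless. Moreover $(vL_1':vL_1)=(vN':vN)$ is prime to $p$ because $N'|N$ is tame, and $L_1'v|L_1v$ is separable because $L_1v=Kv$ is perfect. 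Thus $L_1'|L_1$ satisfies (TE1)--(TE3), and $L_1$ is a tame field. The immediate base-change assertion for $N'|L_1'$ is the technical heart of this step.

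It remains to run the induction and descend. Over the tame base $L_1$ the field $F\cdot L_1$ is an immediate function field of transcendence degree $n-1$, and $N$ is an immediate tame extension of it; by the inductive hypothesis there is a finite immediate extension $F_1$ with $F\cdot L_1\subseteq F_1\subseteq N$ such that $F_1|L_1$ is henselian rational, say $F_1\subseteq L_1(t_2,\ldots,t_n)^h$ with $t_2,\ldots,t_n\in F_1$. Since $F_1$ is finitely generated over $K$ and henselization is determined over a finitely generated subfield, there is a finitely generated field $L_1^0$ with $K(x_1)\subseteq L_1^0\subseteq L_1$ and $F_1\subseteq L_1^0(t_2,\ldots,t_n)^h$. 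Now $L_1^0|K$ is an immediate function field of transcendence degree~$1$ over the tame field $K$, so the base case gives $L_1^0\subseteq K(t_1)^h$ for some $t_1\in L_1^0\subseteq F_1$. By transitivity of henselization, $K(t_1)^h(t_2,\ldots,t_n)^h=K(t_1,\ldots,t_n)^h$, so $F_1\subseteq K(t_1,\ldots,t_n)^h$ and $t_1,\ldots,t_n$ is a transcendence basis of $F_1|K$ witnessing henselian rationality. Finally $\trdeg(F_1|K)=1+(n-1)=n=\trdeg(F|K)$, so $F_1|F$ is algebraic, hence finite, and it is immediate as a subextension of $N|F$; thus $F_1$ is the required finite immediate extension of $F$ inside $N$.
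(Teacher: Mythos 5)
Your proposal has two genuine gaps, one of which you name yourself. The entire content of the theorem sits in the transcendence degree 1 case with $\chara Kv=p>0$, and your proposal does not prove it: you write that driving the defect down to $1$ ``is the main obstacle of the theorem'' and stop there. The pseudo-Cauchy heuristic you sketch in its place is not a viable route: the defect of $F^h|K(x_0)^h$ is a phenomenon over $K(x_0)^h$, which is not algebraically maximal, so Kaplansky's algebraic/transcendental dichotomy over the tame field $K$ gives no handle on it; there is no argument that replacing $x_0$ by a better pseudo-limit strictly lowers the defect, and no termination argument. What the paper actually does is entirely different: it reduces to rank 1 (Propositions~\ref{stt2} and~\ref{sttarb}, via decomposition of the place and a finite-rank subfield argument), descends henselian rationality through the tame extension $K\sep|K$ (Theorem~\ref{,pdp}, from \cite[Theorem 14.5]{[K--V]}) so that the base may be assumed separable-algebraically closed, reduces via ramification theory to Galois extensions of degree $p$ of $K(x)^h$ (Lemma~\ref{,mcc}), and then constructs a new generator $\vartheta$ with $E=K(\vartheta)^h$ by the Artin-Schreier/Kummer normal forms of Lemmas~\ref{nff} and~\ref{,nff} (Propositions~\ref{xh=fxheq} and~\ref{xh=fxhmix}), finally invoking \cite[Theorem 11.1]{[K--V]} to move the generator into $F$. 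None of this machinery, nor any substitute for it, appears in your proposal.

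The inductive step also fails as written, in two places. First, your proof that $L_1=N\cap\widetilde{K(x_1)}$ is tame rests on the unproved claim that $N.L_1'|L_1'$ is immediate for every finite $L_1'|L_1$ (``the approximation property of immediate extensions''); this does not follow from what you have: comparing the two towers $L_1\subseteq L_1'\subseteq N.L_1'$ and $L_1\subseteq N\subseteq N.L_1'$ only yields that the defect of $L_1'|L_1$ equals $[(N.L_1')v:L_1'v]$ up to a prime-to-$p$ index, and nothing forces that residue degree to be $1$. This statement is exactly \cite[Lemma~3.15]{[K7]}, which the paper cites rather than reproves; it is a real theorem, not a formality. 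Second, and fatally for the statement being proved, the field $F_1$ produced by your induction hypothesis contains all of $L_1$, which is in general an \emph{infinite} algebraic extension of $K(x_1)$; hence $F_1$ is not finitely generated over $K$, your sentence ``Since $F_1$ is finitely generated over $K$'' is false, and the concluding ``algebraic, hence finite'' is a non sequitur --- yet the theorem requires $F_1|F$ to be finite. The paper avoids this trap by the opposite order of operations: it applies the transcendence degree 1 result over the big field $L$ (the relative algebraic closure of $K(t_1,\ldots,t_n)$ in $N'$), then cuts down to a subfield $L_0\subset L$ finitely generated over $K$ of transcendence degree $n$, applies the induction hypothesis to $L_0|K$ to obtain a finite extension $L_1|L_0$ inside $L$, and sets $F_1=F.L_1$, which is then a finitely generated subextension of the algebraic extension $N'|F$, hence finite over $F$.
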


Actually, we will prove more general results:
\begin{theorem}                \label{stt3'}
Let $(K,v)$ be a separably tame field and $(F|K,v)$ an immediate function field, with $F|K$ a separable extension.
If its transcendence degree over $K$ is 1, then $(F|K,v)$ is henselian rational. In the
general case of transcendence degree $\geq 1$, given any immediate separable extension $(N,v)$ of $(F,v)$
which is a separably tame field, there is a finite immediate separable extension $(F_1,v)$ of $(F,v)$ within
$(N,v)$ such that $(F_1|K,v)$ is henselian rational.
\end{theorem}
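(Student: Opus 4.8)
The plan is to treat the transcendence-degree-$1$ statement as the analytic heart of the theorem and to bootstrap the general case from it by induction on $\trdeg F|K$, using the separably tame field $(N,v)$ as a fixed ``tame ceiling'' throughout. Since a separably tame field is in particular henselian, we have $K=K^h$, and the relative separable-algebraic closure of $K$ in $F$ is $K$ itself (an immediate separable-algebraic extension of a separably tame field is trivial). Thus all genuine content of $(F|K,v)$ is transcendental.

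For the base case $\trdeg F|K=1$, I would first choose a transcendental $x\in F$ with $K(x)|K$ immediate. Any element $x\in F\setminus K$ is transcendental over $K$, and being an element of a proper immediate extension it is the limit of a pseudo-Cauchy sequence in $K$ without a limit in $K$; such a sequence is necessarily of transcendental type, so $K(x)|K$ is immediate. With this choice $F|K(x)$ is a finite separable immediate extension, so by the Lemma of Ostrowski its degree equals its defect, a power of $p=\chara Kv$. Henselian rationality is then exactly the assertion that $x$ can be chosen so that this defect is trivial, i.e. $F\subseteq K(x)^h$.

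The main obstacle, and the place where separable tameness of $K$ is indispensable, is the elimination of this defect by an optimal choice of $x$. I would proceed by successive approximation: writing $F=K(x)(\eta)$ with minimal polynomial $g\in K(x)[Y]$, if $\eta\notin K(x)^h$ I would analyze the values $v(\eta-z)$ for $z$ ranging over $K(x)^h$ and over the conjugates of $\eta$, extract from them a pseudo-Cauchy sequence, and use it to replace $x$ by a transcendental element approximating a generator of $F$ more closely. Separable tameness supplies enough roots and approximants that, after a convergent sequence of such adjustments, $g$ acquires a Hensel-applicable shape; Hensel's Lemma then places $\eta$ into $K(x)^h$, giving $F\subseteq K(x)^h$ and hence that $(F|K,v)$ is henselian rational. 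Controlling this process, ensuring that a suitable invariant (the defect exponent, or an associated value) strictly decreases so that the procedure terminates or converges, is the technically demanding step.

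For the inductive step with $n=\trdeg F|K>1$, I would fix a transcendence basis $x_1,\dots,x_n$ of $F|K$ and let $K_{n-1}$ be the relative separable-algebraic closure of $K(x_1,\dots,x_{n-1})$ inside $N$. A preliminary lemma is needed here: since $N$ is separably tame and $K_{n-1}$ is relatively separable-algebraically closed in $N$, the field $K_{n-1}$ is again henselian and separably tame, so the base case applies to the immediate separable transcendence-degree-$1$ extension $(F\cdot K_{n-1}\,|\,K_{n-1},v)$ inside $N$, yielding a transcendental $t$ with $F\cdot K_{n-1}\subseteq K_{n-1}(t)^h$. Because $F$ is finitely generated, only a finite subextension $L$ of $K(x_1,\dots,x_{n-1})$ inside $K_{n-1}$ is actually used, and $L|K$ is an immediate separable function field of transcendence degree $n-1$ lying in the separably tame field $N$. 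Applying the induction hypothesis to $(L|K,v)$ produces a finite immediate separable extension $L_1\supseteq L$ in $N$ and a transcendence basis $\cT_0$ of size $n-1$ with $L_1\subseteq K(\cT_0)^h$. Taking $\cT=\cT_0\cup\{t\}$ and letting $F_1$ be the finite extension of $F$ inside $N$ generated by the finitely many elements witnessing $F\subseteq L_1(t)^h\subseteq K(\cT)^h$, I obtain the desired henselian rational $(F_1|K,v)$. The points requiring care here are the descent of separable tameness to $K_{n-1}$, the verification that every field in sight is immediate and separable over its base, and the bookkeeping that keeps $F_1$ finite over $F$ and inside $N$; the genuine difficulty, however, remains concentrated in the transcendence-degree-$1$ defect elimination described above.
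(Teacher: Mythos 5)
Your inductive step for transcendence degree $>1$ is essentially the paper's own argument (relative separable-algebraic closures inside $N$, the degree-one case applied over the enlarged base field, the induction hypothesis applied to a finitely generated subfield, and finite bookkeeping to produce $F_1$), and the preliminary facts you flag there (descent of separable tameness to relatively closed subfields, immediateness and separability of the intermediate extensions) are indeed available. The genuine gap is the transcendence-degree-$1$ case, which you correctly identify as the heart but then dispatch with an undeveloped ``successive approximation'' scheme. The sentence ``separable tameness supplies enough roots and approximants that, after a convergent sequence of such adjustments, $g$ acquires a Hensel-applicable shape'' \emph{is} the theorem to be proved, and your sketch supplies no mechanism for it: the obstruction is the defect of $F^h|K(x)^h$, and defect is precisely the phenomenon that cannot be removed by naively approximating a generator --- if it could, no hypothesis on $K$ beyond henselianity would be needed, yet the statement is false over general henselian base fields. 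There is no invariant shown to decrease, no termination or convergence argument, and no treatment of the mixed characteristic case, which requires tools entirely different from the equal characteristic case.

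Concretely, what the paper does and what is absent from your proposal: (i) a reduction to base fields of rank $1$ (Propositions~\ref{stt2} and~\ref{sttarb}), needed because the normal-form machinery requires $K[x]$ to be dense in $K(x)^h$ (Lemma~\ref{,1id}); (ii) a descent theorem through tame extensions (Theorem~\ref{,pdp}, resting on Theorem~14.5 of \cite{[K--V]}) which reduces to separable-algebraically closed base fields; (iii) ramification theory (Lemma~\ref{,mcc}) to reduce further to Galois extensions of degree $p$ of $K(x)^h$; (iv) the derivation of normal forms for Artin-Schreier generators in equal characteristic (Lemma~\ref{nff}) and for Kummer generators in mixed characteristic (Lemma~\ref{,nff}, with the $\delta$/$\Delta$ calculus and admissible pairs), exploiting the additivity of $X^p-X$ and the transcendental approximation type; and (v) the facts $K(z)^h=K(g(z))^h$ (Lemma~\ref{zh=fzh}) and Theorem~11.1 of \cite{[K--V]} to pull the new generator down into $F$ itself. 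None of these is a routine verification --- the mixed-characteristic normal forms are exactly where the original argument of \cite{[K1]} contained a gap later filled by Ershov (Remark~\ref{remershov}) --- so the degree-one case cannot be taken on faith as your proposal does. (A smaller point: your chosen $x\in F\setminus K$ need not be a separating element, so $F|K(x)$ need not be separable; the paper is careful to choose separating elements, e.g.\ via Lemma~\ref{xysep}.)
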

\n
In Section~\ref{sectprstt3} we will show that the latter theorem implies the former. For $\chara K=0$
both theorems coincide.
To avoid case distinctions, we will often work with the formulation given in the
latter theorem, even when $\chara K=0$.
%

\pars
Take an immediate function field $(F|K,v)$ of transcendence degree $1$. Choose
any $x\in F$ transcendental over $K$. Then the finite extension $F^h|K(x)^h$ is immediate
(cf.\ Lemma~\ref{FhFEh} below). If $\chara Kv=0$,
then the Lemma of Ostrowski shows that this extension is trivial. This proves Theorem~\ref{stt3'}\label{+disc}
in the case of $\chara Kv=0$; for details and a more general result, see Theorem~\ref{rc0friffhr}. Except for that
theorem, we will always assume that $\chara Kv=p>0$.

\pars
In order to prove Theorem~\ref{stt3'}, we will reduce to the case of $(K,v)$ having rank 1, i.e., its value
group being archimedean ordered. We will prove that under this assumption, Theorem~\ref{stt3'} holds whenever
$K$ is separable-algebraically closed (Proposition~\ref{,stt1}). Then the following theorem will prove the rank 1
case of the first assertion of Theorem~\ref{stt3'}, because if $(K,v)$ is a separably tame field, then it is
separable-algebraically maximal and the extension $K\sep|K$ is tame by definition.

If $F$ and $L$ are subfields of a common extension field $E$, then we define the \bfind{compositum} $F.L$ to be
the smallest subfield of $E$ that contains both $F$ and $L$. Further, we denote the completion of a valued field
$(K,v)$ by $K^c$.
\begin{theorem}                   \label{,pdp}
Let \mbox{$(K,v)$} be a separable-algebraically maximal field of rank 1, and let $(F|K,v)$ be a separable
immediate function field of transcendence degree 1. Assume that there is no valuation preserving embedding of $F$
in $K^c$ over $K$. If $(F.L|L,v)$ is a henselian
rational function field over $L$ for some tame extension $(L|K,v)$, then also $(F|K,v)$ is henselian rational.
\end{theorem}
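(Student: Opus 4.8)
The plan is to descend a henselian generator from $L$ to $K$, and the first task is to set up a comparison of degrees on the two levels. Since $(F.L|L,v)$ is henselian rational, fix $x\in F.L$, transcendental over $L$, with $(F.L)^h=L(x)^h$. The crucial structural input is that $F$ is \emph{regular} over $K$: any subextension of $F^h|K$ that is separable-algebraic over $K$ is also immediate (subextensions of immediate extensions are immediate, and $F^h|K$ is immediate since $F^h|F$ is a henselization and $F|K$ is immediate), hence trivial because $(K,v)$ is separable-algebraically maximal; together with separability of $F^h|K$ this yields $F^h\cap\tilde{K}=K$. Consequently $F^h$ and $\tilde{K}$ are linearly disjoint over $K$, and since every tame extension is separable we have $L\subseteq K\sep$, so $F^h$ and $L$ are linearly disjoint over $K$.

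Next I would exploit this linear disjointness to show that the defect is the same on both levels. For any $x_0\in F$ transcendental over $K$, the tower property of linear disjointness applied to $K\subseteq K(x_0)^h\subseteq F^h$ shows that $F^h$ and $L(x_0)^h=L\cdot K(x_0)^h$ are linearly disjoint over $K(x_0)^h$; since $F^h\cdot L=(F.L)^h$, this gives the degree identity $[(F.L)^h:L(x_0)^h]=[F^h:K(x_0)^h]$. By Lemma~\ref{FhFEh} together with the Lemma of Ostrowski, both sides are powers of $p=\chara Kv$. In particular, an element $x_0\in F$ generates $F^h$ henselian-rationally over $K$ if and only if it generates $(F.L)^h$ henselian-rationally over $L$. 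This reduces the theorem to finding an $L$-generator of $(F.L)^h$ that already lies in $F$.

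To produce such an $x_0$, I would pass to pseudo-Cauchy sequences. A second consequence of linear disjointness and tameness of $L|K$ is that $(F.L)^h|L$ is immediate: the base extension $F^h\cdot L|F^h$ is tame, so $v(F^h\cdot L)=vL$ and $(F^h\cdot L)v=Lv$. Hence $L(x)|L$ is immediate and $x$ is the limit of a pseudo-Cauchy sequence $(a_\nu)$ in $L$ of transcendental type; the hypothesis $F\not\subset K^c$ guarantees that the associated cut, i.e.\ the distance of $x$ from $L$, is not realized in the completion, which is exactly the situation in which a generator of maximal distance is available. The remaining step is to descend $(a_\nu)$ from $L$ to $K$: because $vL/vK$ is a torsion group without $p$-torsion (it is the value group quotient of a tame algebraic extension) and $Lv|Kv$ is separable, the breadth of $(a_\nu)$ is cofinally traced on $vK$, and using the presentation $F.L=F\otimes_K L$ one replaces $x$ by a pseudo-limit $x_0\in F$ of a pseudo-Cauchy sequence in $K$ realizing the same distance.

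The main obstacle is precisely this last descent. The degree identity reduces everything to exhibiting an $F$-rational generator, but $F$ is \emph{not} dense in the finite extension $F.L$, so $x$ cannot simply be approximated by elements of $F$; the tame structure of $L|K$, equivalently of $F.L|F$, must be used to control how the value group, the residue field and the breadth of the pseudo-Cauchy sequence descend, so that the element $x_0\in F$ realizing the maximal distance still satisfies $[F^h:K(x_0)^h]=1$. Once such an $x_0$ is found, the degree identity of the second step immediately gives $(F.L)^h=L(x_0)^h$, and hence $F^h=K(x_0)^h$, proving that $(F|K,v)$ is henselian rational.
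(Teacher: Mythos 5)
Your opening reduction is sound and is actually a nice observation: since $F^h|K$ is immediate and $(K,v)$ is separable-algebraically maximal, every separable-algebraic subextension of $F^h|K$ is immediate and hence trivial, so $K$ is relatively algebraically closed in the separable extension $F^h|K$; thus $F^h|K$ is regular, hence linearly disjoint from $L$ over $K$, and together with Lemma~\ref{FhFEh} and the tower property of linear disjointness this does yield $[(F.L)^h:L(x_0)^h]=[F^h:K(x_0)^h]$ for every $x_0\in F$ transcendental over $K$. This correctly reduces the theorem to a single task: find $x_0\in F$ with $(F.L)^h=L(x_0)^h$, i.e., show that the henselian-rational generator of $(F.L)^h$ over $L$ can be chosen inside $F$.

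But that task is exactly where your proposal stops being a proof. The sentence ``one replaces $x$ by a pseudo-limit $x_0\in F$ of a pseudo-Cauchy sequence in $K$ realizing the same distance'' is an assertion, not an argument, and your closing paragraph concedes that ``the main obstacle is precisely this last descent.'' Nothing in the proposal establishes (i) how the distance of $x$ from $L$ relates to distances over $K$ across the tame extension, (ii) why an element of $F$ realizing the relevant distance exists, or (iii) why such an element would in fact generate $(F.L)^h$ over $L$ --- this last implication fails for arbitrary pseudo-limits and is precisely what the theory of approximation types is needed for. This descent is the entire content of the theorem: in the paper it is delegated to Theorem 14.5 of \cite{[K--V]}, and the paper's own contribution is only the observation (via Lemma~\ref{transat}) that the hypothesis ``algebraically maximal'' in that theorem can be weakened to ``separable-algebraically maximal'' once $F|K$ is separable, because all that proof needs is that the approximation type of the relevant element over $(K,v)$ be transcendental. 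A secondary, fixable flaw: your justification that $(F.L|L,v)$ is immediate is a non sequitur --- tameness of $F^h.L|F^h$ does not by itself give $v(F^h.L)=vL$ and $(F^h.L)v=Lv$; the correct argument combines immediacy of $F|K$ with defectlessness of finite tame extensions via the fundamental inequality, although the conclusion itself is true.
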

We will deduce this theorem in Section~\ref{sectred2} from \cite[Theorem 14.5]{[K--V]}.
\pars
In order to prove Theorem~\ref{stt3'} under the additional assumptions, we reduce further to the analysis
of Galois extensions $E|K(x)^h$ of degree $p$ (see the more detailed discussion of our methods below). We will find
some $y\in E$ such that $E=K(y)^h$; then \cite[Theorem 11.1]{[K--V]} shows us that $y$ can already be chosen in $F$.

\pars
Let us also note:
\begin{proposition}                            \label{N}
Extensions $(N,v)$ of $(F,v)$ as in Theorem~\ref{stt3} or Theorem~\ref{stt3'} always exist.
\end{proposition}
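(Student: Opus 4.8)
The plan is to realize $N$ as a suitably maximal immediate extension of $(F,v)$ and then invoke the structural characterization of (separably) tame fields from \cite{[K7]}, namely that a henselian field is tame if and only if it is algebraically maximal with $p$-divisible value group and perfect residue field, and separably tame if and only if it is separable-algebraically maximal with $p$-divisible value group and perfect residue field. The crucial preliminary observation is that since $(F|K,v)$ is immediate we have $vF=vK$ and $Fv=Kv$; as $(K,v)$ is (separably) tame, $vF$ is therefore $p$-divisible and $Fv$ is perfect, and both properties are clearly preserved under any further immediate extension.

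For Theorem~\ref{stt3} I would take $(N,v)$ to be a maximal immediate extension of $(F,v)$. Such an extension exists by Zorn's Lemma, since any chain of immediate extensions of $(F,v)$ has the (directed) union as an upper bound, and this union is again immediate; a cardinality bound on immediate extensions confines the construction to a set. By maximality, $(N,v)$ admits no proper immediate algebraic extension, hence is algebraically maximal (in particular henselian). Since $vN=vF=vK$ is $p$-divisible and $Nv=Fv=Kv$ is perfect, the characterization in \cite{[K7]} shows that $(N,v)$ is a tame field, as required.

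For Theorem~\ref{stt3'} I would instead take $(N,v)$ to be a maximal immediate separable extension of $(F,v)$, that is, a maximal element, with respect to inclusion, of the family of immediate extensions $(M,v)$ of $(F,v)$ with $M|F$ separable. Zorn's Lemma again applies: the union of a chain of such extensions is immediate over $F$ and, being a directed union of separable extensions, is separable over $F$. The key point is that such a maximal $N$ is automatically separable-algebraically maximal: a proper immediate separable-algebraic extension $N'$ of $N$ would, by transitivity of separability, be separable over $F$ and immediate over $F$, contradicting the maximality of $N$. Since moreover $vN=vK$ is $p$-divisible and $Nv=Kv$ is perfect, the characterization of separably tame fields in \cite{[K7]} shows that $(N,v)$ is separably tame, while $N|F$ is separable by construction.

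I expect the genuinely delicate part to be not the routine Zorn arguments but the correct bookkeeping that lets the characterization theorems apply: one must confirm that separably tame fields indeed have perfect residue field (so that perfectness and $p$-divisibility transfer through $K\subseteq F\subseteq N$), and that in the separable case maximality among immediate separable extensions yields exactly separable-algebraic maximality rather than full algebraic maximality. Once these hypotheses are verified, the structural conclusion rests entirely on \cite{[K7]}, and no transcendence-degree or rank reduction is needed.
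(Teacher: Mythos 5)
Your construction is essentially the paper's own: the paper likewise takes a maximal immediate algebraic (resp.\ separable-algebraic) extension $(N,v)$ of $(F,v)$, notes that it is algebraically (resp.\ separable-algebraically) maximal with $p$-divisible value group and perfect residue field, and concludes by the characterization theorems \cite[Theorems~3.2 and~3.10]{[K7]}. Whether one maximizes over all immediate (separable) extensions, as you do, or only over the algebraic ones, as the paper does, is immaterial; the paper's variant just avoids your cardinality bookkeeping, since all candidates live inside a fixed algebraic closure.

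There is, however, a genuine gap in your ``crucial preliminary observation'', at precisely the spot you yourself flagged as needing verification: it is \emph{not} true that every separably tame field has perfect residue field. If $v$ is trivial on $K$, then $(K,v)$ is henselian and every separable-algebraic extension $E|K$ is tame, since (TE1)--(TE3) hold automatically: ramification indices are $1$, the residue extension $Ev|Kv$ is just the separable extension $E|K$, and there is no defect. Thus $\F_p(t)$ with the trivial valuation is a separably tame field whose residue field $\F_p(t)$ is not perfect. Accordingly, the characterization of separably tame fields in \cite[Theorem~3.10]{[K7]} assumes $v$ nontrivial, and the paper's proof explicitly splits off this case: if $v$ is trivial on $K$, then $F=K$ by immediacy and one simply sets $N=K$. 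Your argument for Theorem~\ref{stt3'} needs the same case distinction; your Zorn construction still produces $N=K$ in this situation, but the justification that $N$ is separably tame must then come from the hypothesis on $K$, not from the characterization theorem, whose hypotheses ($Nv$ perfect) you cannot verify. A second, more cosmetic, omission: when $\chara Kv=0$ there is no prime $p$ and the characterization via ``$p$-divisibility'' does not parse; the paper disposes of this case first by observing that $(F^h,v)$ is then already a tame field. In the tame case (Theorem~\ref{stt3}) your argument is fine even for trivial $v$, since (TE2) applied to inseparable algebraic extensions forces every tame field to be perfect, so there the perfectness of the residue field does hold unconditionally.
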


\pars
In Section~\ref{sectvalg} we will deduce the following theorem from Theorem~\ref{stt3}:
\begin{theorem}                \label{stt4}
Let $(F|K,v)$ be a valued function field of transcendence degree 1 such that $vF/vK$ is a torsion group and $Fv|Kv$
is algebraic. Extend $v$ to the algebraic closure of $F$. Then there is a finite extension $L$ of $K$ such that
$(F.L|L,v)$ is henselian rational.
\end{theorem}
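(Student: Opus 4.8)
The plan is to pass from the arbitrary base $(K,v)$ to its algebraic closure $(\tilde{K},v)$, which is automatically a tame field, apply the transcendence degree~$1$ case of Theorem~\ref{stt3} there, and then descend the resulting henselian rationality to a finite subextension of $\tilde{K}|K$. First I would use the given extension of $v$ to $\tilde{F}$ and regard $\tilde{K}\subseteq\tilde{F}$ with the restricted valuation. Since $\tilde{K}$ is algebraically closed it has no proper algebraic extensions, so it is henselian and trivially tame; thus $(\tilde{K},v)$ is a tame field with $v\tilde{K}=\Q\otimes vK$ divisible and $\tilde{K}v=\overline{Kv}$ algebraically closed. As $F|K$ is finitely generated, $F.\tilde{K}=\tilde{K}(F)$ is a function field over $\tilde{K}$, and because $\tilde{K}|K$ is algebraic we have $\trdeg(F.\tilde{K}|\tilde{K})=\trdeg(F|K)=1$.

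The key step is to show that $(F.\tilde{K}|\tilde{K},v)$ is immediate. Any $\beta\in F.\tilde{K}$ lies in $F_0.\tilde{K}$ for some finitely generated subextension $F_0=K(a_1,\dots,a_n)$ of $F|K$, and $F_0.\tilde{K}|F_0$ is algebraic because $\tilde{K}|K$ is. Hence $v(F_0.\tilde{K})/vF_0$ is a torsion group and $(F_0.\tilde{K})v|F_0v$ is algebraic. Combining this with the hypotheses that $vF/vK$ is a torsion group and $Fv|Kv$ is algebraic gives $v\beta\in\Q\otimes vF_0\subseteq\Q\otimes vF=\Q\otimes vK=v\tilde{K}$ and, whenever $v\beta=0$, $\beta v\in\overline{F_0v}\subseteq\overline{Kv}=\tilde{K}v$. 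Therefore $v(F.\tilde{K})=v\tilde{K}$ and $(F.\tilde{K})v=\tilde{K}v$, so the extension is immediate. This is precisely the point where both hypotheses enter, through the standard fact that an algebraic extension enlarges the value group only by torsion and the residue field only algebraically.

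Now the first assertion of Theorem~\ref{stt3}, applied over the tame field $\tilde{K}$ to the immediate function field $(F.\tilde{K}|\tilde{K},v)$ of transcendence degree $1$, yields an element $x\in F.\tilde{K}$, transcendental over $\tilde{K}$, with $F.\tilde{K}\subseteq\tilde{K}(x)^h$. It then remains to descend to a finite extension of $K$. Writing $\tilde{K}=\bigcup_L L$ over the finite subextensions $L|K$, we have $x\in F.L_1$ for some such $L_1$, and since henselization is compatible with this directed union, $\tilde{K}(x)^h=\bigcup_L L(x)^h$. Choosing finitely many generators $a_1,\dots,a_r$ of $F|K$, each $a_i$ then lies in some $L(x)^h$; taking $L$ to be a finite extension of $K$ containing $L_1$ together with all these finitely many fields, we obtain $F\subseteq L(x)^h$ and hence $F.L\subseteq L(x)^h$. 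As $x$ is transcendental over $L$ and $\trdeg(F.L|L)=1$, the singleton $\{x\}$ is a transcendence basis of $F.L|L$, so $(F.L|L,v)$ is henselian rational.

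The main obstacle is the immediacy step of the second paragraph, on which the applicability of Theorem~\ref{stt3} rests, together with the compatibility of henselization with the union $\tilde{K}=\bigcup_L L$ that makes the descent to a finite $L$ clean; the genuine difficulty of the assertion is carried entirely by Theorem~\ref{stt3}.
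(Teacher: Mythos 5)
Your proof is correct. Its first half is exactly the paper's argument: pass to $\tilde{K}$, which is trivially a tame field, check that $(F.\tilde{K}|\tilde{K},v)$ is an immediate function field of transcendence degree $1$ (the paper does this by comparing divisible hulls of value groups and algebraic closures of residue fields, which is the same computation as yours), and apply Theorem~\ref{stt3}. Where you genuinely differ is in the descent from $\tilde{K}$ to a finite extension $L|K$. The paper proves this as a standalone proposition (valid for any algebraic extension $K'|K$ and arbitrary transcendence degree) whose proof relies on the theory of henselian elements: by \cite[Theorem~1.2]{[KN]}, $F.K'$ is generated over $K'(\cT)$ by a henselian element $\eta$; the coefficients of its minimal polynomial are descended to a finite subextension $K_2|K$, the uniqueness part of Hensel's Lemma gives $\eta\in K_2(\cT)^h$, and a compositum argument finishes. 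You replace all of this by the compatibility of henselization with directed unions, $\tilde{K}(x)^h=\bigcup_L L(x)^h$, plus finite generation of $F|K$. That union fact, which you assert without proof, is standard and is moreover an immediate consequence of the paper's own Lemma~\ref{FhFEh}: $L(x)^h=L(x).K(x)^h$ for every finite $L|K$, hence $\bigcup_L L(x)^h=\tilde{K}(x).K(x)^h=\tilde{K}(x)^h$, since any element of the compositum $\tilde{K}(x).K(x)^h$ involves only finitely many elements of $\tilde{K}$; alternatively, $\bigcup_L L(x)^h$ is henselian as a directed union of henselian fields and contains $\tilde{K}(x)$, while each $L(x)^h\subseteq\tilde{K}(x)^h$ by minimality of henselizations inside $(\tilde{F},v)$. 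Your route is shorter, avoids the external input from \cite{[KN]}, and generalizes verbatim to arbitrary transcendence degree, so it also reproves the paper's proposition; what the paper's route buys is the extra structural information that $F.L$ is generated over $L(\cT)$ by a single henselian element, i.e., one whose minimal polynomial has integral coefficients and whose residue is a simple root of the reduced polynomial.
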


\parb
The core methods for the proof of Theorems~\ref{stt3} and~\ref{stt3'} were developed in \cite{[K1]}. Later we found
out that they are very similar to an approach put forward by S.~Abhyankar in \cite{[A1]}: ramification theory (i.e.,
the fact that ramification groups are $p$-groups) is used to reduce the proofs to the central problem of
dealing with Galois extensions of degree $p=\chara Kv$. In the present paper, this reduction is based on
Lemma~\ref{,mcc} below. In the \bfind{equal characteristic case} where $\chara K=\chara Kv$, such an extension is
generated by an element whose minimal polynomial is an \bfind{Artin-Schreier polynomial} $X^p-X-c$. The desired
results in \cite{[A1]} as well as in
the present paper are then achieved by finding a suitable normal form for $c$ (cf.\ Section~\ref{sectnf}); here the
additivity of the polynomial $X^p-X$ plays a crucial role. Note that Abhyankar deals with
polynomials of the form $X^p-d^{p-1}X-c$ since he works over rings; as we work over fields we have the benefit of
using the original Artin-Schreier polynomial. In the \bfind{mixed characteristic case} where $\chara K=0$
and $\chara Kv=p>0$, the cyclic extensions are generated by $p$-th roots, assuming that the fields in question
contain a primitive $p$-th root of unity. Also for this case we derive suitable normal forms. It is worth mentioning
that the tools for this purpose are developed in Section~2.2 of \cite{[K8]} by transforming a polynomial $X^p-c$ into
one that is Artin-Schreier modulo coefficients of higher value. In this way, we can use a form of additivity modulo
terms of higher value.

Abhyankar pulls up local uniformization through cyclic extensions of degree $p$ (``going up'' -- see
\cite[Theorem 4]{[A1]}), and in the present paper we do the same for henselian rationality. In the case of Abhyankar
places on function fields, the same is done in \cite{[K8]} for the property of being a defectless field. The
remaining case of ``going up'' for degrees prime
to $p$ (cf.\ \cite[Theorem 2 and \$5]{[A1]}) is handled without breaking them up into extensions of prime degree by
making use of the properties of tame extensions. The same is true for our analogue of Abhyankar's ``coming down''
(cf.\ \cite[Theorem 2 and \$6]{[A1]}): henselian rationality (as well as the property of being a defectless field)
can be pulled down through every tame extension (Theorem~\ref{,pdp}).

Recently, inspired by our approach laid out in \cite{[K--K1],[K3]}, V.~Cossart and O.~Piltant have used the same
reduction procedure in \cite{CP1,CP2} to prove resolution of singularities for threefolds (see their remark about
their Theorem~8.1 in the Introduction of \cite{CP1}). The problem is reduced to dealing with Artin-Schreier
extensions and
purely inseparable extensions of degree $p$. Note that the latter can be avoided in the present paper by using the
fact that the function fields we consider are separably generated.

\pars
When our attention was drawn to H.~Epp's paper \cite{[Ep]} we realized that our methods in dealing with
Artin-Schreier extensions in \cite{[K1],[K8]} and in the present paper constitute a generalization of some
of his methods. Based on our own experience with the pitfalls of the mentioned deduction of normal forms, we noticed
a gap in one of his proofs, which we filled in \cite{[K4]}. In turn, a gap in one of our proofs in \cite{[K1]} was
later filled by Yu.~Ershov; cf.\ Remark~\ref{remershov} below.

%
%
\subsection{Applications}
\mbox{ }\sn
$\bullet$ \ {\bf Elimination of ramification.} \
This is the task of finding a transcendence basis $\mathcal{T}$
for a given valued function field $(F|K,v)$ such that the extension $(F^h|
K(\mathcal{T})^h ,v)$ of respective henselizations is
\bfind{unramified}, that is, the residue fields form a separable
extension $Fv| K(\mathcal{T})v$ of degree equal to
$[F^h:K(\mathcal{T})^h]$, and $vF=vK(\mathcal{T})$. (Recall that passing
to the henselization does not change value group and residue field.)

Theorem~\ref{stt3} and~\ref{stt3'} show that immediate function fields
of transcendence degree 1 under the given assumptions admit elimination
of ramification. Theorem~\ref{stt4} shows that valued function fields of transcendence
degree 1 that are valuation algebraic extensions in the sense of \cite{KTrans} admit elimination
of ramification over a finite extension of the base field.

\mn
$\bullet$ \ {\bf Local uniformization in positive and in mixed
characteristic.} \ Theorem~\ref{stt3} is a crucial ingredient for our
proof that all places of algebraic function fields admit local
uniformization after a finite extension of the function field (\cite{[K--K2]}).
The analogous arithmetic case (also treated in \cite{[K--K2]}) uses
Theorem~\ref{stt3} in mixed characteristic. The proofs use solely
valuation theory.

\mn
$\bullet$ \ {\bf Model theory of valued fields.} \
In \cite{[K7]} we use Theorems~\ref{stt3} and~\ref{stt3'} to prove the following Ax--Kochen--Ershov Principle:
\begin{theorem}                             \label{tfee}
Take two tame valued fields $(K,v)$ and $(L,v)$ of positive characteristic. If $vK$ is elementarily
equivalent to $vL$ as ordered groups and $Kv$ is elementarily equivalent
to $Lv$ as fields, then $(K,v)$ is elementarily equivalent to $(L,v)$ as
valued fields.
\end{theorem}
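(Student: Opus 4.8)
The plan is to prove the Ax--Kochen--Ershov principle (Theorem~\ref{tfee}) by embedding the model theory into the structure theory supplied by Theorem~\ref{stt3}. The standard route is to show that the theory of tame fields of a fixed positive characteristic is \emph{relatively complete} over the theories of the value group (as an ordered abelian group) and the residue field (as a field). I would establish this by a back-and-forth (Ehrenfeucht--Fra\"iss\'e) argument between two sufficiently saturated tame models $(K,v)$ and $(L,v)$ with $vK\equiv vL$ and $Kv\equiv Lv$. By passing to $\aleph_1$-saturated (or more safely, highly saturated) elementary extensions we may assume the value groups and residue fields are already \emph{partially isomorphic}, so that any finitely generated configuration in one can be matched in the other at the level of $vK$ and $Kv$.

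\parm
The heart of the argument is the lifting step: given a partial valuation-preserving isomorphism $\iota\colon (K_0,v)\to (L_0,v)$ between small subfields whose restrictions to value group and residue field extend to the ambient partial isomorphisms, and given a new element $a\in K$, I must extend $\iota$ to include $a$ in its domain. First I would reduce to the case where $K_0,L_0$ are (relatively algebraically closed and) \emph{tame} subfields, using that tameness is inherited and that henselizations are unique. If $a$ is algebraic over $K_0$, then since $K_0$ is tame the extension $K_0(a)|K_0$ is tame, hence its value group and residue data determine it up to isomorphism, and I can match it on the $L$ side using the already-available isomorphisms of $vK$ and $Kv$ together with (TE1)--(TE3) and Ostrowski's Lemma. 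The genuinely new phenomenon occurs when $a$ is transcendental over $K_0$ but the extension $K_0(a)|K_0$ is \emph{immediate}: here the value group and residue field give no new information, so naive matching fails. This is exactly where Theorem~\ref{stt3} enters. By that theorem the immediate function field $(K_0(a)^h|K_0,v)$ is \emph{henselian rational}, i.e.\ there is $x$ with $K_0(a)^h=K_0(x)^h$ and $(K_0(x)|K_0,v)$ \emph{unramified} (indeed $vK_0(x)=vK_0$ and $K_0(x)v=K_0v$, since the extension is immediate and $x$ is transcendental). Thus the immediate algebraic complexity is absorbed into a single transcendental $x$, and on the $L$ side I can choose any element realizing the same cut/pseudo-Cauchy behaviour over $L_0$ — saturation guarantees such a realization — and send $x\mapsto x'$, extending $\iota$ to the henselizations by their universal property.

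\parm
I expect the main obstacle to be precisely the transcendental immediate case, and more specifically controlling how $x$ is located over $L_0$ so that the extension matches. The subtlety is that an immediate transcendental extension is generated by a \emph{pseudo-Cauchy sequence without a limit} in $K_0$; to transport it to the $L$ side I must produce an element $x'$ that is a pseudo-limit of the corresponding sequence over $L_0$ and such that $(L_0(x')|L_0,v)$ is again immediate and henselian rational in the same way. Ensuring that the chosen pseudo-limit yields an immediate (rather than value-group- or residue-field-increasing) extension is the delicate point, and it relies on the theory of pseudo-Cauchy sequences of transcendental type together with the tameness (hence algebraic maximality) of the ambient fields, which forbids stray defect. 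Once this single step is secured, the back-and-forth closes: the union of the chain of partial isomorphisms is a valuation-preserving isomorphism between dense subfields, which extends to an isomorphism of the henselizations, and by the Keisler--Shelah theorem (or directly, since a partial isomorphism defined on all of two saturated structures is an isomorphism) we conclude $(K,v)\equiv(L,v)$.
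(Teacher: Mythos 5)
You should be aware at the outset that this paper does not prove Theorem~\ref{tfee} at all: it is stated as an application, and the proof is explicitly deferred to \cite{[K7]}, the only claim made here being that Theorem~\ref{stt3} is the decisive ingredient. So your proposal can only be measured against the proof in \cite{[K7]} --- and against that proof your architecture is essentially the right one: relative completeness via embedding lemmas/back-and-forth between sufficiently saturated models, algebraic steps handled through the structure and uniqueness properties of tame extensions, and the immediate transcendental step handled by Theorem~\ref{stt3} combined with Kaplansky's theory of pseudo-Cauchy sequences, where algebraic maximality of tame fields forces transcendental type and saturation supplies a pseudo-limit on the other side. Your identification of the immediate transcendental extension as the exact point where henselian rationality enters is correct and is precisely how \cite{[K7]} uses the main theorem of this paper.

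There is, however, a genuine gap: nowhere does your argument use the hypothesis $\chara K=\chara Kv=p>0$, yet the theorem cannot be proved without it. If your sketch were valid as written, it would apply verbatim to tame fields of mixed characteristic, where this form of the statement fails: the sentence ``there exists $x$ with $v(x^2)=vp$'' is first order in the valued field language, but its truth value is not controlled by the theories of $vK$ (as a pure ordered group) and $Kv$, since order-isomorphic value groups (e.g.\ $\Z[1/p]$ and $\frac{1}{2}\Z[1/p]$, $p$ odd) can place $vp$ at a $2$-divisible element in one field and a non-$2$-divisible element in the other. This is exactly why the mixed-characteristic Ax--Kochen--Ershov principles in \cite{[K7]} require the value groups to be elementarily equivalent in a language with a distinguished constant for $vp$. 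The step of your argument that silently breaks in mixed characteristic, and hence the place where positivity of the characteristic must be invoked, is the base of the back-and-forth: in mixed characteristic the prime field is $\Q$ with $v|_\Q$ equivalent to the $p$-adic valuation, so the initial partial isomorphism (the identity on $\Q$) forces its value-group companion to send $vp$ to $vp$, a compatibility that mere elementary equivalence of pure ordered groups does not provide; in equal characteristic $p$ the prime field $\F_p$ is trivially valued and no such constraint arises. Your write-up must say this. Two smaller defects: you never treat the value-transcendental and residue-transcendental lifting steps (they are needed, and use saturation of the value group and residue field as structures), and Keisler--Shelah is the wrong closing device --- you have built no ultrapowers; the correct finish is Karp's theorem (a back-and-forth system of partial isomorphisms yields elementary equivalence) or exhaustion of two $\kappa$-saturated models of cardinality $\kappa$, and the phrase ``isomorphism between dense subfields'' has no role in either.
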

\n
In the same paper and in \cite{[KPal]}, Theorems~\ref{stt3} and~\ref{stt3'} are also used to
prove other Ax--Kochen--Ershov Principles (which then also hold in mixed characteristic),
and further model theoretic results for tame and separably tame valued fields. The reader should note that
in the present paper we will make extensive use of the valuation theoretical preliminaries and the general
algebraic theory of tame and separably tame fields presented in Sections 2 and 3 of \cite{[K7]}.
Theorems~\ref{stt3} and~\ref{stt3'} are stated in the Introduction, but only applied in Section 7 of \cite{[K7]}
to prove model theoretic results on tame and separably tame fields.

%
%
\section{Two special cases}
We start with a lemma that we will need here as well as later in the paper.
\begin{lemma}                       \label{FhFEh}
Take an arbitrary algebraic extension $(F|E,v)$ and extend $v$ to $\tilde{F}$. Taking the respective henselizations in
$(\tilde{F},v)$, we have that $F^h=F.E^h$. Hence if $F|E$ is finite, algebraic or separable, then $F^h|E^h$ is
finite, algebraic or separable, respectively. Further, $(F|E,v)$ is immediate if and only it $(F^h|E^h,v)$ is.
\end{lemma}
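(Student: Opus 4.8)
The plan is to prove the equality $F^h = F.E^h$ first, and then derive all the remaining assertions as consequences, since they follow from this single structural identity together with standard facts about henselizations.

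**Proving $F^h = F.E^h$.** The strategy is to use the universal/minimality property of the henselization that is built into the definition given in the excerpt: a henselization of $(L,v)$ is a minimal henselian extension, uniquely embeddable over $L$ into any other henselian extension. I would establish the equality by showing the two fields embed into one another inside the fixed algebraic closure $(\tilde F, v)$, where all henselizations are taken. First, since $E \subseteq F$, the henselization $E^h$ (computed inside $\tilde F$) is a henselian valued field contained in $\tilde F$, and $F.E^h$ is the compositum inside $\tilde F$. I would argue that $F.E^h$ is henselian: it is an algebraic extension of the henselian field $E^h$ (because $F|E$ is algebraic, hence $F.E^h | E^h$ is algebraic), and an algebraic extension of a henselian field is again henselian, since the extension of $v$ to $\tilde F = \widetilde{E^h}$ stays unique. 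Because $F.E^h$ is a henselian field containing $F$, the minimality of $F^h$ gives an embedding $F^h \hookrightarrow F.E^h$ over $F$, and since everything sits inside $\tilde F$ this embedding is the identity, so $F^h \subseteq F.E^h$. For the reverse inclusion, note $F^h$ is a henselian field containing $E$, so by minimality of $E^h$ we get $E^h \subseteq F^h$; combined with $F \subseteq F^h$ this yields $F.E^h \subseteq F^h$. Hence $F^h = F.E^h$.

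**Deriving the transfer of finiteness, algebraicity, separability.** Once $F^h = F.E^h$, these properties of $F^h|E^h$ transfer from $F|E$ by the standard behavior of field extensions under base change (composition with $E^h$). Concretely, $F^h|E^h = F.E^h|E^h$, and for a compositum $F.E^h$ over $E^h$ one has that $F.E^h$ is generated over $E^h$ by the same generators that generate $F$ over $E$. Thus if $F|E$ is finite then $F.E^h|E^h$ is finite (with degree at most $[F:E]$); if $F|E$ is algebraic then so is $F.E^h|E^h$; and if $F|E$ is separable then, because $E^h|E$ is separable-algebraic (noted in the excerpt) and separability is preserved under base field extension by a separable extension, $F.E^h|E^h$ is separable. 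I expect these to be routine once the main identity is in hand.

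**Immediacy.** For the equivalence "$(F|E,v)$ immediate $\iff (F^h|E^h,v)$ immediate," the key fact, stated in the excerpt, is that henselization is an immediate extension: $vE = vE^h$, $Ev = E^h v$ and likewise $vF = vF^h$, $Fv = F^h v$. Therefore $vE^h = vF^h \iff vE = vF$ and $E^h v = F^h v \iff Ev = Fv$, so the value-group and residue-field equalities defining immediacy for $F^h|E^h$ hold if and only if they hold for $F|E$. This direction I expect to be immediate (pun intended) from the invariance of value group and residue field under henselization.

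\medskip

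The step I expect to be the main obstacle is verifying that $F.E^h$ is henselian, i.e.\ the claim that an algebraic extension of a henselian field is henselian. I would justify it via the characterization already recorded in the excerpt: $(E^h,v)$ henselian means the extension of $v$ to $\tilde{E^h}=\tilde F$ is unique; any valued field squeezed between $E^h$ and $\tilde F$ inherits this uniqueness and is therefore henselian. Care must be taken that all henselizations and composita are formed inside the single fixed extension $(\tilde F, v)$, as the excerpt stipulates, so that the embeddings produced by the minimality property are genuine inclusions rather than merely abstract isomorphisms.
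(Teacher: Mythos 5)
Your proposal is correct and follows essentially the same route as the paper's proof: both show $F.E^h$ is henselian as an algebraic extension of the henselian field $(E^h,v)$ and use minimality of henselizations inside the fixed $(\tilde F,v)$ to get the two inclusions, then read off the remaining assertions from $F^h=F.E^h$ and the invariance of value group and residue field under henselization. The only difference is that you spell out the routine transfer of finiteness, algebraicity and separability to the compositum, which the paper dismisses as a direct consequence.
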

\begin{proof}
As an algebraic extension of the henselian field $(E^h,v)$, also $(F.E^h,v)$ is henselian. It also contains $(F,v)$, so
it must contain $(F^h,v)$. On the other hand, $F^h$ contains $F$ and $E$, and must also contain the henselization
$E^h$. So $F.E^h\subseteq F^h$ and equality holds.

The second assertion is a direct consequence of the first. For the third assertion, just observe that
$vF^h=vF$, $F^hv=Fv$, $vE^h=vE$ and $E^hv=Ev$.
\end{proof}

A valued field $(K,v)$ is called \bfind{finitely ramified} if there is a
prime $p$ such that $vp>0$ and $vK$ has only finitely many elements between $0$ and
$vp$. In this case, $\chara K=0$ and $\chara Kv=p$. Every henselian finitely ramified field is a \bfind{defectless
field}, i.e., all of its finite extensions are defectless (\cite{[K1]}; cf.\ \cite{[KB]}).
\begin{theorem}                               \label{rc0friffhr}
Let $(K,v)$ be a valued field of residue characteristic 0 or a finitely
ramified field. Then every immediate function field over $(K,v)$ is
henselian rational.
\end{theorem}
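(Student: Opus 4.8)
The plan is to show that \emph{any} transcendence basis $\mathcal{T}\subset F$ of $F|K$ already witnesses henselian rationality, i.e.\ that $F\subseteq K(\mathcal{T})^h$. The first step is to reduce to a finite immediate extension. Since $F|K$ is a function field, $F|K(\mathcal{T})$ is finite; and since $(F|K,v)$ is immediate, the intermediate field $K(\mathcal{T})$ is squeezed by $vK\subseteq vK(\mathcal{T})\subseteq vF=vK$ and $Kv\subseteq K(\mathcal{T})v\subseteq Fv=Kv$, which forces $vK(\mathcal{T})=vK=vF$ and $K(\mathcal{T})v=Kv=Fv$. Hence $(F|K(\mathcal{T}),v)$ is itself immediate. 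Passing to henselizations inside $(\tilde{F},v)$ and invoking Lemma~\ref{FhFEh}, I obtain that $(F^h|K(\mathcal{T})^h,v)$ is finite and immediate.

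It then suffices to prove that this finite immediate extension is trivial, for then $F\subseteq F^h=K(\mathcal{T})^h$ and we are done. I would treat the two hypotheses on $(K,v)$ separately. If $\chara Kv=0$, then the Lemma of Ostrowski applied to the finite extension $F^h|K(\mathcal{T})^h$ of the henselian field $K(\mathcal{T})^h$ forces the defect to equal $1$; combined with $(vF^h:vK(\mathcal{T})^h)=1$ and $[F^hv:K(\mathcal{T})^hv]=1$ coming from immediacy, this yields $[F^h:K(\mathcal{T})^h]=1$.

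If instead $(K,v)$ is finitely ramified, I would first observe that $K(\mathcal{T})$ is again finitely ramified: the defining prime $p$ still satisfies $vp>0$, and since $vK(\mathcal{T})=vK$ there remain only finitely many elements of $vK(\mathcal{T})$ between $0$ and $vp$. As henselization alters neither value group nor residue field, $K(\mathcal{T})^h$ is a henselian finitely ramified field, hence a defectless field by the result cited just before the theorem. Applying defectlessness to the finite immediate extension $F^h|K(\mathcal{T})^h$ gives $[F^h:K(\mathcal{T})^h]=(vF^h:vK(\mathcal{T})^h)[F^hv:K(\mathcal{T})^hv]=1$.

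The argument is short because the hard work is carried by the cited facts; the points needing care are the reduction that $(F|K(\mathcal{T}),v)$ is immediate and remains so after henselization, and—in the finitely ramified case—the observation that finite ramification descends to $K(\mathcal{T})$ \emph{precisely because} immediacy pins down its value group. The only genuine obstacle, namely controlling the defect of $F^h|K(\mathcal{T})^h$, simply evaporates in these two regimes: residue characteristic $0$ kills it via Ostrowski, and finite ramification kills it via defectlessness. This also highlights why the main theorems of the paper are much harder: in general positive residue characteristic the defect need not vanish, so one cannot take an arbitrary transcendence basis but must construct a good one.
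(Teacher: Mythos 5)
Your proof is correct and follows essentially the same route as the paper's: take an arbitrary transcendence basis $\mathcal{T}$, use immediacy of $(F|K,v)$ to sandwich $vK(\mathcal{T})$ and $K(\mathcal{T})v$ so that $(F|K(\mathcal{T}),v)$ is immediate, transfer this via Lemma~\ref{FhFEh} to the finite immediate extension $(F^h|K(\mathcal{T})^h,v)$, and kill its defect by the Lemma of Ostrowski when $\chara Kv=0$ and by defectlessness of henselian finitely ramified fields otherwise. Your explicit check that $K(\mathcal{T})$ is again finitely ramified (because $vK(\mathcal{T})=vK$ and $vp>0$ persists) is exactly the content of the paper's remark that every immediate extension of a finitely ramified field is finitely ramified, just spelled out.
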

\begin{proof}
Let $(F|K,v)$ be an immediate function field. Let ${\cal T}$ be an arbitrary transcendence basis of $F|K$. Then
also $(K(\cT)|K,v)$ is immediate, as $vK\subseteq vK(\cT)\subseteq vF=vK$ and $Kv\subseteq K(\cT)v\subseteq Fv=Kv$.
Hence by Lemma~\ref{FhFEh}, $(F^h|K({\cal T})^h,v)$ is an immediate algebraic extension. If the
residue characteristic of $(K,v)$ is 0, then the same holds for $(K({\cal T})^h,v)$, so the Lemma of
Ostrowski yields that the extension $(F^h|K({\cal T})^h, v)$ must be trivial, whence $F\subseteq K({\cal T})^h$.

If $(K,v)$ is a finitely ramified field, then so is every immediate
extension of $(K,v)$. Hence $(K({\cal T})^h,v)$ is a defectless field,
and it again follows that $F\subseteq K({\cal T})^h$.
\end{proof}

For the next theorem, note that the completion $K^c$ of a henselian field $(K,v)$ is again henselian (cf.\
\cite[Theorem~32.19]{[W]}). Hence henselizations of any subfields of this completion can be taken inside of it.
\begin{theorem}               \label{Xicr}
Let $(K,v)$ be a henselian field of arbitrary characteristic. If the
valued function field $(F|K,v)$ is a separable subextension of the
extension $K^c|K$, then $(F|K,v)$ is henselian rational; more precisely, $F\subset K(\cT)^h$
for every separating transcendence basis $\cT$ of $F|K$.
\end{theorem}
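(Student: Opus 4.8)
The plan is to reduce to a finite separable algebraic situation via a separating transcendence basis, and then exploit the density of $K$ (hence of $K(\cT)$) in the completion $K^c$ together with Hensel's Lemma inside $K(\cT)^h$. Concretely, I would first fix a separating transcendence basis $\cT$ of $F|K$, which exists precisely because $F|K$ is separable. Then $F|K(\cT)$ is finite and separable, so by the primitive element theorem $F=K(\cT)(\alpha)$ for some $\alpha\in F$, with separable minimal polynomial $g\in K(\cT)[X]$, say monic of degree $n$ with distinct roots $\alpha=\alpha_1,\ldots,\alpha_n\in\tilde{K}$.

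Since $(K,v)$ is henselian, its completion $K^c$ is henselian (as recalled just before the theorem), and therefore the henselization $K(\cT)^h$ of the subfield $K(\cT)\subseteq K^c$ can be taken inside $K^c$. My goal then becomes to show $\alpha\in K(\cT)^h$, since this gives $F=K(\cT)(\alpha)\subseteq K(\cT)^h$ and hence henselian rationality for the chosen $\cT$. The key input is that $K$, and a fortiori $K(\cT)$, is dense in $K^c$ by definition of the completion; as $\alpha\in F\subseteq K^c$, I may approximate $\alpha$ arbitrarily well by elements of $K(\cT)$.

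I would then run a Newton/Hensel approximation. Because $g$ is separable, $g'(\alpha)\neq 0$; set $\gamma=v(g'(\alpha))=\sum_{i\ge 2}v(\alpha-\alpha_i)$ and $\delta=\max_{2\le i\le n}v(\alpha-\alpha_i)$, both fixed and finite. Choosing $a\in K(\cT)$ with $v(a-\alpha)$ large (possible by density), the ultrametric inequality forces $v(g(a))=v(a-\alpha)+\gamma$, while continuity of the polynomial $g'$ forces $v(g'(a))=\gamma$; hence for $v(a-\alpha)>\gamma$ the Newton condition $v(g(a))>2\,v(g'(a))$ holds. Applying Hensel's Lemma in the henselian field $K(\cT)^h$ yields a root $\beta\in K(\cT)^h$ of $g$ with $v(\beta-a)=v(a-\alpha)$, so $v(\beta-\alpha)\ge v(a-\alpha)$. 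Taking moreover $v(a-\alpha)>\delta$, the root $\beta$ cannot equal any $\alpha_i$ with $i\ge 2$, so $\beta=\alpha$; thus $\alpha\in K(\cT)^h$ and $F\subseteq K(\cT)^h$.

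I expect the step needing the most care to be this quantitative Hensel estimate: checking that density genuinely allows $v(g(a))$ to exceed $2\,v(g'(a))$ while $v(g'(a))$ stays bounded, and that the root supplied by Hensel's Lemma is close enough to $\alpha$ to be forced equal to it rather than to some other conjugate. Separability of $F|K(\cT)$ — that is, $g'(\alpha)\neq 0$ and the separation of the roots $\alpha_i$ — is exactly what makes these estimates go through, so the separability hypothesis on $F|K$ enters in an essential way. Everything else (existence of $\cT$, henselianity of $K^c$, and density of $K$ in $K^c$) is either given or standard.
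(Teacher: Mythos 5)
Your proposal is correct in substance, and it shares the paper's skeleton: fix a separating transcendence basis $\cT$, note that henselizations of subfields of $K^c$ may be taken inside $K^c$, and reduce everything to showing that the separable-algebraic extension of $K(\cT)^h$ generated by $F$ inside $K^c$ is trivial. The difference is in how that last step is handled. The paper dispatches it with a single citation: $F.K(\cT)^h|K(\cT)^h$ is a separable-algebraic subextension of $K^c|K(\cT)^h$, and since $K(\cT)^h$ contains $K$ it is dense in $K^c$, so that $K^c$ is its completion; the quoted theorem that a henselian field is separable-algebraically closed in its completion (Warner, Theorem~32.19) then forces this extension to be trivial. You instead reprove that fact in the case at hand: primitive element, density of $K(\cT)$ in $K^c$, and the quantitative Newton--Hensel estimate forcing the Hensel root $\beta$ to coincide with $\alpha$. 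This is essentially the standard Krasner-type proof of the cited theorem, so your argument is self-contained and elementary where the paper's is by reference; it also makes visible exactly where separability enters ($g'(\alpha)\neq 0$ and the separation of the conjugates $\alpha_i$), which the citation hides. The price is length and some bookkeeping with values; the paper's price is reliance on an external result.

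One technical point needs patching in your write-up: the form of Hensel's Lemma you invoke requires integrality, i.e., $g\in {\cal O}_{K(\cT)}[X]$ and $va\geq 0$, which can fail if $\alpha$ or one of its conjugates has negative value. This is repaired by the routine normalization of replacing $\alpha$ by $c\alpha$ for some $c\in K$ with $vc$ large enough that all conjugates $c\alpha_i$ become integral; such $c$ exists because the values $v\alpha_i$ lie in the divisible hull of $vK(\cT)=vK$, in which $vK$ is cofinal. After this normalization the differences of conjugates have non-negative value, so in addition $\gamma\geq\delta$, and your final estimates identifying $\beta$ with $\alpha$ go through exactly as written.
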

\begin{proof}
Let $\cT$ be a separating transcendence basis of $F|K$. Then $F.K(\cT)^h|K(\cT)^h$ is a separable-algebraic
subextension of $K^c|K(\cT)^h$. But this extension must be trivial since a henselian field is
separable-algebraically closed in its completion (cf.\ \cite[Theorem 32.19]{[W]}).
\end{proof}

%
%
\section{Valuation theoretical tools}
We will develop here some tools that we will later use in the proof of Lemma~\ref{,nff}.
We take an arbitrary valued field $(L,v)$ of characteristic 0 with residue characteristic $p>0$.
The following lemma has been proved in \cite[Corollary~2.11]{[K8]}:
\begin{lemma}               \label{1+y}
Let $(K,v)$ be a henselian field containing all $p$-th roots of unity.
Take any $1$-units $1+b$ and $1+c$ in $K$ (i.e., $vb>0$ and $vc>0$). Then:
\sn
a) \ $1 + b \in (1 + b + c)\cdot (K^{\times})^p\;\;$ if $\;vc > \frac{p}{p-1}vp\,$.
\sn
b) \ $1 + b \in (1 + b + c)\cdot (K^{\times})^p\;\;$ if $\;1+c\in (K^{\times})^p$ and $vbc > \frac{p}{p-1}vp\,$.
\sn
c) \ $1 + b -pc \in (1 + b + c^p)\cdot (K^{\times})^p\;\;$ if $\;vb\geq \frac{1}{p-1}vp\;$ and $\;vc^p > vp\,$.
\end{lemma}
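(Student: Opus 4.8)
The plan is to reduce all three parts to a single fact, which I will call the \emph{key estimate}: in a henselian field of residue characteristic $p$, every $1$-unit $1+d$ with $vd>\frac{p}{p-1}vp$ lies in $(K^\times)^p$. Granting this, (a) and (b) are short manipulations in the group $(K^\times)^p$. For (a) it suffices to show that $\frac{1+b+c}{1+b}=1+\frac{c}{1+b}$ is a $p$-th power; since $1+b$ is a $1$-unit, so $v(1+b)=0$, this is a $1$-unit of value $vc>\frac{p}{p-1}vp$, and the key estimate applies directly. For (b) I would instead compare $1+b+c$ with the product $(1+b)(1+c)=1+b+c+bc$: the quotient $\frac{1+b+c}{(1+b)(1+c)}$ is a $1$-unit of value $v(bc)>\frac{p}{p-1}vp$, hence a $p$-th power by the key estimate, and multiplying by the hypothesised $p$-th power $1+c$ shows $\frac{1+b+c}{1+b}\in(K^\times)^p$.

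For (c) the idea is to multiply the $1$-unit $1+b-pc$ by the genuine $p$-th power $(1+c)^p$ and to check that the result differs from $1+b+c^p$ only by a $1$-unit above the threshold. Expanding $(1+c)^p=1+pc+c^p+R$ with $R=\sum_{j=2}^{p-1}\binom{p}{j}c^j$ (so $vR\ge vp+2vc$, using $v\binom{p}{j}=vp$ for $1\le j\le p-1$), the linear term $pc$ cancels against $-pc$, giving $(1+b-pc)(1+c)^p=(1+b+c^p)+\epsilon$, where $\epsilon$ collects $R$ together with all the cross terms. One then checks term by term that every summand of $\epsilon$ has value $>\frac{p}{p-1}vp$; this consumes both hypotheses $vb\ge\frac{1}{p-1}vp$ and $vc^p>vp$, together with the elementary inequality $\frac{p}{p-1}\le 2$. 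The key estimate then gives $\frac{(1+b-pc)(1+c)^p}{1+b+c^p}\in(K^\times)^p$, and dividing by the $p$-th power $(1+c)^p$ yields the claim.

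The main obstacle is the key estimate itself, because its threshold $\frac{p}{p-1}vp$ is sharp and, for $p\ge 4$, does \emph{not} follow from one application of Hensel's Lemma. To prove it I would solve $g(Y)=(1+Y)^p-(1+d)=0$ by Newton approximation, starting from $y_0=d/p$, so that $vy_0=vd-vp>\frac{vp}{p-1}$ and the linear term cancels, giving $v(g(y_0))=\min(vp+2\,vy_0,\,p\,vy_0)$. The exponent $\frac{p}{p-1}vp$ is precisely what forces $v(g(y_0))>\frac{p}{p-1}vp$. A direct estimate of the Newton step (using $v\binom{p}{k}=vp$ again) shows that whenever $v(g(y_n))>\frac{p}{p-1}vp$ one has $v(g(y_{n+1}))\ge\min(2\,v(g(y_n))-vp,\,p\,v(g(y_n))-p\,vp)$, a quantity that strictly increases and grows without bound, while $v(g'(y_n))=vp$ stays fixed because the corrections have value $>vy_0$. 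After finitely many steps $v(g(y_n))>2vp=2\,v(g'(y_n))$, and the standard form of Hensel's Lemma then produces an exact $p$-th root. I note that $\zeta_p\in K$ is not actually used anywhere in this argument; the only place a $p$-th power is needed as input is the hypothesis of (b).
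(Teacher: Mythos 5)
Your reductions of parts a), b), c) to the single ``key estimate'' are correct: under the stated hypotheses the quotients $\frac{1+b+c}{1+b}=1+\frac{c}{1+b}$, $\frac{1+b+c}{(1+b)(1+c)}=1-\frac{bc}{(1+b)(1+c)}$ and $\frac{(1+b-pc)(1+c)^p}{1+b+c^p}$ are $1$-units lying above the threshold $\frac{p}{p-1}vp$ (your term-by-term bounds in c) do check out), and the key estimate itself is a true, standard fact. Note that the paper offers no proof to compare with: it quotes Lemma~\ref{1+y} from \cite{[K8]}. The genuine gap is in your proof of the key estimate. The lemma is stated for an \emph{arbitrary} henselian field, so $vK$ may be non-archimedean, and there your Newton iteration stalls. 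Writing $\gamma_n=v(g(y_n))=\frac{p}{p-1}vp+\epsilon_n$, your recursion certifies only $\epsilon_{n+1}\geq\min\left(\frac{1}{p-1}vp+2\epsilon_n,\;p\,\epsilon_n\right)$. Suppose $\epsilon_0$ lies in a proper convex subgroup $\Delta$ of the (divisible hull of the) value group with $vp\notin\Delta$, which can happen exactly when the rank is $>1$. Then at every step the minimum is the second entry (any positive element outside a convex subgroup dominates every element of it), so all the recursion guarantees is $\epsilon_n\geq p^n\epsilon_0$, and these lower bounds remain in $\Delta$. Reaching the Hensel threshold $\gamma_n>2vp$ requires $\epsilon_n>\frac{p-2}{p-1}vp$, which dominates every element of $\Delta$ when $p\geq 3$; so the iteration never provably terminates. ``Strictly increasing'' does not imply ``grows without bound'' in an ordered abelian group. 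Your argument is therefore complete only for rank $1$ --- which happens to cover the applications in Section~4 of this paper, but not the lemma as stated.

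Your closing observation that the $p$-th roots of unity are never used is precisely the symptom of this gap: they are what makes the key estimate provable in one step, in any rank. Since the residue characteristic is $p$, the residue of $\zeta_p$ is $1$; hence each $(\zeta_p^i-1)/(\zeta_p-1)=1+\zeta_p+\dots+\zeta_p^{i-1}$ is a unit (its residue is $i\neq 0$), and $\prod_{i=1}^{p-1}(1-\zeta_p^i)=p$ then forces $v(\zeta_p-1)=\frac{1}{p-1}vp$ \emph{exactly}. Now set $\eta=\zeta_p-1$, substitute $Y=\eta Z$ into $g(Y)=(1+Y)^p-(1+d)$ and divide by $\eta^p$: the resulting monic polynomial has coefficient of $Z^k$ of value $\frac{k-1}{p-1}vp\geq 0$ and constant term of value $vd-\frac{p}{p-1}vp>0$, so its reduction is $Z^p+\bar{a}Z$ with $\bar{a}=(p\eta^{1-p})v\neq 0$. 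Zero is a simple root of this reduction, so Hensel's Lemma yields $z\in\mathcal{M}_K$ with $(1+\eta z)^p=1+d$ --- in arbitrary rank, with a single application of Hensel's Lemma. (Alternatively: over a henselian field all roots of an irreducible polynomial have equal value, and the Newton polygon of $g$ carries the slope $-(vd-vp)$ with multiplicity one, so the corresponding irreducible factor is linear.) Either of these closes the gap; the $\zeta_p$-substitution is in the spirit of the source \cite{[K8]} that the paper cites, and explains why that hypothesis appears in the statement.
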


Part c) of this lemma will play an important role in the proof of Lemma~\ref{,nff}, which is dealing with
valued fields of mixed characteristic. There we will use it to replace elements $a$ by expressions of
the form $-pa^{1/p}$. But as we will be working in a field of characteristic $0$ which contains the $p$-th roots of
unity, the expression $a^{1/p}$ does not designate a unique element (unless $a=0$). This, however, does not
matter for our purposes, as in part c) of the above lemma, $c$ can be replaced by $\zeta c$ for any $p$-th root
of unity $\zeta$. So when we use these expressions, we actually mean to say: ``choose any $p$-th root''. In the
same way we will use an operator $\Delta$ as follows:
\begin{equation}
\Delta(a):=-pa^{1/p} \>.
\end{equation}
Its inverse $\Delta^{-1}$ is in fact a function, sending $d$ to $(-d/p)^p$.

In order to track the change of value in passing from the term $a$ to the term $-pa^{1/p}$, we will use a
tool that was introduced in \cite{[Er]}. Note that $va^{1/p}=va/p$, no matter which $p$-th root $a^{1/p}$ we
have chosen. To avoid unnecessary technical complications, we will assume that
the value group $vL$ is $p$-divisible. We define a function $\delta$ on it as follows:
\[
\delta(\gamma)\>:=\> vp\,+\,\frac{1}{p}\gamma\>.
\]
We then have:
\begin{equation}
v\Delta(a) \>=\> v(-pa^{1/p}) \>=\> \delta(va) \;\mbox{ and }\; v\Delta^{-1}(d) \>=\> v(-d/p)^p \>=\>
\delta^{-1}(vd)\>.
\end{equation}

If we set $d=-pc$, then $c^p=(-d/p)^p$ and the condition ``$vc^p>vp$'' becomes
``$vd>vp+\frac{1}{p}vp=\delta(vp)$''. So part c) of Lemma~\ref{1+y} can be reformulated as:
\begin{equation}                  \label{Delta^-1}
(1 + b + d)\cdot K^p \>=\, 
(1 + b + \Delta^{-1}(d))\cdot K^p\;\; \mbox{ if $\;vb\geq
\frac{1}{p-1}vp\;$ and $\; \delta^{-1}(vd) > vp\,$.}
\end{equation}

\parm
We denote by $\Delta^i$ the $i$-th iteration of $\Delta$, and by $\delta^i$ the $i$-th iteration of $\delta$.

\begin{lemma}
The function $\delta$ has the following properties:
\sn
a) \ $\delta$ is order preserving, and for each integer $i\geq 0$, it
induces a bijection from the interval $\left[\delta^i(0),\frac{p}{p-1}vp
\right]$ onto the subinterval $\left[\delta^{i+1}(0),\frac{p}{p-1}vp
\right]$,
\sn
b) \ $\delta$ is strictly increasing on $\left[0,\frac{p}{p-1}vp
\right)$ and has a fixed point in $\frac{p}{p-1}vp$,
\sn
c) \ for each $i\geq 1$, 
\begin{eqnarray*}
& \Delta^i(a)\>=\>\Delta^i(1)\cdot a^{1/p^i} \>\mbox{ and \ }\; \Delta^{-i}(a)\>=\>\Delta^{-i}(1)\cdot a^{p^i} & \\
& \delta^i(\gamma)\>=\>\frac{1+p+\ldots+p^{i-1}}{p^{i-1}}vp \,+\, \frac{1}{p^i}
\gamma \>=\>\delta^i(0) \,+\,\frac{1}{p^i}\gamma  \;\mbox{ and \ }\>
\delta^{-i}(\gamma)\>=\>\delta^{-i}(0) \,+\,p^i\gamma\>. &
\end{eqnarray*}
\end{lemma}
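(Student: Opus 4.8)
The plan is to treat $\delta$ as what it is, an order-preserving affine self-map of the value group, and to read off all three assertions from its normal form. First I record that $\delta(\gamma)=vp+\frac{1}{p}\gamma$ has positive slope $\frac{1}{p}$, hence is strictly order-preserving and injective, with affine inverse $\delta^{-1}(\gamma)=p\gamma-p\cdot vp$; being affine and invertible over $\Q$, it is an order-automorphism of the ambient divisible group. This already yields the ``order preserving'' clause of (a) and the strict monotonicity in (b). Since the fixed point $\frac{p}{p-1}vp$ need not lie in $vL$ when $vL$ is merely $p$-divisible, I read all intervals inside the divisible hull $\Q\otimes vL$, to which $\delta$ extends by the same formula.

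For (b) I solve the fixed-point equation $\gamma=vp+\frac{1}{p}\gamma$, which gives $\gamma(1-\frac{1}{p})=vp$, i.e. $\gamma=\frac{p}{p-1}vp$, so $\frac{p}{p-1}vp$ is the unique fixed point. Because the slope lies in $(0,1)$, one has $\delta(\gamma)-\gamma=(\frac{1}{p}-1)(\gamma-\frac{p}{p-1}vp)>0$ for $\gamma<\frac{p}{p-1}vp$, so $\delta$ strictly pushes every point of $[0,\frac{p}{p-1}vp)$ upward toward the fixed point while keeping it below the fixed point; this monotone ``pushing'' is exactly what I will need for the nesting in (a).

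For (c) I iterate by induction. Unwinding $\delta^i(\gamma)=vp+\frac{1}{p}\delta^{i-1}(\gamma)$ gives $\delta^i(\gamma)=vp(1+\frac{1}{p}+\cdots+\frac{1}{p^{i-1}})+\frac{1}{p^i}\gamma$; collecting the geometric sum over the common denominator $p^{i-1}$ turns the constant into $\frac{1+p+\cdots+p^{i-1}}{p^{i-1}}vp=\delta^i(0)$, and the same computation applied to $\delta^{-1}(\gamma)=p\gamma-p\cdot vp$ (slope $p$) gives $\delta^{-i}(\gamma)=\delta^{-i}(0)+p^i\gamma$. For the $\Delta$-identities I start from $\Delta(a)=\Delta(1)\cdot a^{1/p}$ and $\Delta^{-1}(a)=\Delta^{-1}(1)\cdot a^{p}$, both immediate from $\Delta(a)=-pa^{1/p}$ and $\Delta^{-1}(a)=(-a/p)^p$, and feed these into an induction: $\Delta^i(a)=\Delta(\Delta^{i-1}(1)\cdot a^{1/p^{i-1}})=\Delta(1)\cdot(\Delta^{i-1}(1))^{1/p}\cdot a^{1/p^i}=\Delta^i(1)\cdot a^{1/p^i}$, and symmetrically for $\Delta^{-i}$.

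Finally I return to the bijection in (a). Since $\delta$ is a strictly increasing bijection of the ambient divisible group, it carries any closed interval bijectively onto the interval spanned by the images of its endpoints. Here the lower endpoint maps by $\delta(\delta^i(0))=\delta^{i+1}(0)$ while the upper endpoint $\frac{p}{p-1}vp$ is fixed, so $\delta$ maps $[\delta^i(0),\frac{p}{p-1}vp]$ bijectively onto $[\delta^{i+1}(0),\frac{p}{p-1}vp]$; the target is a genuine subinterval because the sequence $\delta^i(0)$ is increasing and bounded above by the fixed point, as established in the second paragraph (here $\delta(0)=vp>0$). The only point requiring real care is the ambiguity of the $p$-power roots in the $\Delta$-identities of (c): since $a^{1/p}$ is defined only up to a $p$-th root of unity, these identities must be read under the standing convention ``choose any $p$-th root'' and hold for compatible choices on both sides. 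As everything downstream depends only on values and $v$ is insensitive to roots of unity, this causes no loss; I expect this bookkeeping, rather than any genuine mathematical difficulty, to be the main thing to get right.
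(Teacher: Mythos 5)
Your proof is correct and follows essentially the same route as the paper's: the same direct computation for order preservation, the same fixed-point equation and point-pushing inequality below $\frac{p}{p-1}vp$, the endpoint argument for the interval bijection, and induction for part c). Your added care about reading intervals in the divisible hull (since $\frac{p}{p-1}vp$ need not lie in $vL$) and about the $p$-th-root-of-unity ambiguity in the $\Delta$-identities only makes explicit what the paper leaves implicit.
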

\begin{proof}
If $\alpha<\beta$, then $\delta(\alpha)=vp+\frac{1}{p}\alpha< vp+
\frac{1}{p}\beta=\delta(\beta)$; so $\delta$ preserves $<$. Further,
$\gamma\geq \delta(\gamma)=vp+\frac{1}{p}\gamma$ implies that
$\frac{p-1}{p}\gamma\geq vp$, i.e., $\gamma\geq \frac{p}{p-1}vp$; so
$\delta$ is strictly increasing everywhere below $\frac{p}{p-1}vp$.
Replacing $\geq$ by $=$ in this arguments, we see that $\delta
(\frac{p}{p-1}vp)= \frac{p}{p-1}vp$. It follows that for each $i\geq 0$,
$\delta$ induces a bijection from $\left[\delta^i(0),\frac{p}{p-1}vp
\right]$ onto the subinterval $\left[\delta^{i+1}(0),\frac{p}{p-1}vp
\right]$. We have proved assertions a) and b).

Assertion c) is easily proved by induction on $i$.
\end{proof}

%
%
\section{Galois extensions of degree $p$ of $K(x)^h$}               \label{sectGalext}
Throughout this section, {\it we will assume that $(K(x)|K,v)$ is an immediate transcendental extension.} We will
investigate the structure of Galois extensions $E$ of degree $p=\chara Kv>0$ of $K(x)^h$.

If $\chara K=p>0$, then $E|K(x)^h$ is an \bfind{Artin-Schreier extension}, that is, it is
generated by an element $\vartheta\in E$ which satisfies
\begin{equation}                \label{ASext}
\vartheta^p-\vartheta\>=\>a\in K(x)^h
\end{equation}
(cf.\ \cite[VI, \$6, Theorem~6.4]{[L]}). We set $\wp(X)=X^p-X$ and observe that this is an additive polynomial,
i.e., $\wp(b+c)=\wp(b)+\wp(c)$ holds in each field of characteristic $p$. From this fact it follows that $a$ can be
replaced by any other element in $a+\wp(K(x)^h)$ without changing the extension. From Hensel's Lemma it
follows that ${\mathcal M}_{K(x)^h}
\subseteq \wp(K(x)^h)$ and therefore, $a$ can be replaced by any other element in $a+{\mathcal M}_{K(x)^h}$
without changing the extension (see the discussion at the start of Section~4 in \cite{[K8]}).

\pars
If $\chara K=0$ and $K$ contains the $p$-th roots of unity, then $E|K(x)^h$ is
generated by an element $\eta\in E$ which satisfies
\begin{equation}                \label{Kumext}
\eta^p\>=\>a\in K(x)^h
\end{equation}
(cf.\ \cite[VI, \$6, Theorem~6.2]{[L]}). Here, $a$ can be replaced by any other nonzero element in
$a\cdot(K(x)^h)^p$.

\parm
If we assume in addition that the rank of $(K,v)$ is 1, then we can say even more about the element $a$. To this end
we need the following result, which is Lemma~10.1 of \cite{[K--V]}:
\begin{lemma}                               \label{,1id}
If the rank of $(K,v)$ is 1 and $(K(x)|K,v)$ is immediate, then $K[x]$ is dense in $K(x)^h$.
\end{lemma}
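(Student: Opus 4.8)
The plan is to establish density in two stages and then combine them by transitivity of topological closure: first that $K(x)$ is dense in its henselization $K(x)^h$, and then that the polynomial ring $K[x]$ is already dense in $K(x)$. For the first stage I would invoke the standard fact that a valued field of rank $1$ is dense in its henselization --- equivalently, that the henselization embeds into the completion, which is henselian in the rank $1$ case. This applies to $(K(x),v)$, which has rank $1$ because the immediate extension $K(x)|K$ shares the value group $vK$. Hence $K(x)$ is dense in $K(x)^h$ in the valuation topology, and the whole statement is reduced to showing that $K[x]$ is dense in $K(x)$.

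For the second and main stage, every element of $K(x)$ has the form $f/g$ with $f,g\in K[x]$ and $g\neq 0$; since $vf$ is a fixed element of $vK$, it suffices to approximate $1/g$ by polynomials to arbitrarily high value. Here I would exploit immediacy twice. Because $vK(x)=vK$, there is $c\in K^{\times}$ with $vc=vg$, so that $g/c$ is a unit of $\mathcal{O}_{K(x)}$; and because $K(x)v=Kv$, the residue of $g/c$ is the residue of some $s\in\mathcal{O}_K^{\times}$. Setting $w:=g/(cs)-1$ we obtain $g=cs(1+w)$ with $cs\in K^{\times}$ and, crucially, $w=(g/c-s)/s\in K[x]$ with $vw>0$. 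The essential point is that $w$ is \emph{automatically} a polynomial over $K$, so the partial sums of the geometric expansion $1/g=(cs)^{-1}\sum_{n\geq 0}(-w)^{n}$ all lie in $K[x]$, and the tail beyond the $N$-th term equals $(cs)^{-1}(-w)^{N+1}(1+w)^{-1}$, of value $-v(cs)+(N+1)\,vw$.

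The main obstacle, and the only place where the rank $1$ hypothesis is genuinely needed in the second stage, is the convergence of this geometric expansion: I must know that $v(w^{N})=N\cdot vw$ tends to infinity as $N\to\infty$. This is precisely the assertion that the positive value $vw$ is cofinal under multiplication by positive integers, which holds exactly because $vK$ is archimedean, i.e.\ of rank $1$; in higher rank a positive $vw$ could be infinitesimal relative to other positive values and the expansion would fail to converge. Granting this, the partial sums approximate $1/g$, hence also $f/g$, to arbitrary precision, giving density of $K[x]$ in $K(x)$. Combined with the first stage --- and with the fact that $K(x)^h|K(x)$ is immediate, so that the valuation topology on $K(x)^h$ restricts to the one on $K(x)$ --- this yields the density of $K[x]$ in $K(x)^h$.
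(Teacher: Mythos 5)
Your proof is correct, but the comparison here is necessarily one-sided: the paper does not prove this lemma at all — it imports it verbatim as Lemma~10.1 of \cite{[K--V]} — so your argument serves as a self-contained substitute for an external citation rather than a variant of an in-paper proof. Both stages hold up under scrutiny. In the first stage, $(K(x),v)$ has rank 1 because $vK(x)=vK$ by immediacy, a complete valued field of rank 1 is henselian (classical; cf.\ \cite{[W]} or \cite{[Eng--P]}), so by the universal property of the henselization $K(x)^h$ embeds over $K(x)$ into the completion; since completion and henselization are immediate extensions, the topologies are compatible and $K(x)$ is dense in $K(x)^h$. In the second stage, immediacy enters exactly where you say it does (the value group supplies $c$, the residue field supplies $s$), the key observation that $w=g/(cs)-1$ lies in $K[x]$ with $vw>0$ is right, the telescoping identity makes the tail after $N$ terms equal to $(cs)^{-1}(-w)^{N+1}(1+w)^{-1}$, of value $-v(cs)+(N+1)vw$, and the archimedean property of the rank 1 value group makes these values cofinal; hence the partial sums, which lie in $K[x]$, approximate $1/g$ arbitrarily well, and multiplying by $f$ and concatenating the two density statements via the ultrametric inequality is sound. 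What your route buys is transparency about where each hypothesis is used — rank 1 twice (henselian completion, convergence of the geometric series) and immediacy once (normalizing denominators to 1-units over $K$) — at the cost of invoking the classical Hensel property of complete rank 1 fields, which the paper's citation of \cite{[K--V]} sidesteps.
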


Assume that the rank of $(K,v)$ is 1 and that $\chara K=p>0$. By Lemma~\ref{,1id}, for every $a\in K(x)^h$ there
is $f(x)\in K[x]$ such that $a-f(x)\in {\mathcal M}_{K(x)^h}\,$. Hence in (\ref{ASext}), $a$ can be replaced by
$f(x)$, so that we have:
\begin{equation}                \label{ASextf}
E\>=\> K(x)^h(\vartheta)\;\mbox{ with }\; \vartheta^p-\vartheta\>=\>f(x)\in K(x)\>.
\end{equation}

Assume now that the rank of $(K,v)$ is 1 and that $\chara K=0$. Assume in addition that $K$ is closed under $p$-th
roots. Since $a$ lies in an immediate extension of $(K,v)$, we know that $va\in vK$, so there is some $d_1\in K$
such that $vd_1^p=-va$ and therefore, $vd_1^pa=0$. For the same reason, $d_1^pav\in Kv$ and there is some
$d_2\in K$ such that $(d_1^pd_2^pa)v=1$.
We set $d=d_1 d_2\in K$ to obtain that $v((d\eta)^p-1)>0$ and that $d\eta$ generates the $E | K(x)^h$
with $(d\eta)^p \>=\> 1+a' \in K(x)^h$ where $va'>0$. By Lemma~\ref{,1id} there is a polynomial $f(x)\in K[x]$ such
that $v(a'-f(x))> \frac{p}{p-1}vp$. Note that this implies that $vf(x)>0$, i.e., $1+f(x)$ is a 1-unit.
Hence by part a) of Lemma~\ref{1+y}, any root of the polynomial $X^p - (1+f(x))$
will also generate the extension $E|K(x)^h$. So we can assume from the start:
\begin{equation}                            \label{f(x)}
E\>=\> K(x)^h(\eta)\;\mbox{ with }\; \eta^p \>=\> 1+f(x)\in K(x) \mbox{ a 1-unit.}  
\end{equation}

\parm
We will now first determine suitable normal forms for $f(x)$ in (\ref{ASextf}) and (\ref{f(x)}), depending on
the characteristic of $K$.

Since the extension $(K(x)|K,v)$ is immediate and $x\notin K$, the set
\[
v(x-K) \>:=\> \{v(x-c)\mid c\in K\}    
\]
does not have a largest element; this follows from \cite[Theorem 1]{[Ka]}.
We say that the \bfind{approximation type of $x$ over $K$ is transcendental} if for every polynomial $h(X)\in
K[X]$ there is some $\alpha\in v(x-K)$ such that for all $c\in K$ with $v(x-c)\geq \alpha$
the value $vh(c)$ is fixed.

%
%
\subsection{Normal forms for polynomials in K[x]}               \label{sectnf}
In this section we will consider an immediate transcendental extension $(K(x)|K,v)$ with $\chara Kv=p>0$
and assume that the approximation type of $x$ over $K$ is transcendental.
\begin{lemma}                    \label{nff}
Assume that $\chara K>0$.
Then for every $f(x)\in K[x]$ there exists a finite purely inseparable extension $K'|K$ and a polynomial
\begin{equation}                     \label{nfH}
g(z)\,\in\, f(x) + \wp(K'(x)^h)
\end{equation}
satisfying:
\begin{equation}                     \label{nfh}
\left\{\; \begin{minipage}{0.85\textwidth}\noindent
$g(z) = a_n z^n + \ldots  + a_1 z + a_0 \in K'[z]\,, \mbox{ where}$\n
$z = (x - c)/d\,,\mbox{ with } vz = 0\,,\;
c\in K \mbox{ and }\, 0\not= d \in K$,\n
for all $i>0$, we have: $a_i=0$ or $va_i<0$, and $p|i \>\Rightarrow\> a_i = 0$, \n
the values $va_i$ of all nonzero $a_i\,$, $i>0$, are distinct.
\end{minipage}
\right.
\end{equation}
Note that $K[x]=K[z]$. If $(K,v)$ is perfect or separably tame, we may assume that $K'=K$.
\end{lemma}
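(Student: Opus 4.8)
We must find, for each $f(x)\in K[x]$, a polynomial $g(z)$ of the special shape (\ref{nfh}) lying in the same Artin--Schreier class modulo $\wp(K'(x)^h)$, where $z=(x-c)/d$ is a normalized coordinate (value $0$) obtained by a suitable Tschirnhausen-type shift and rescaling. Let me think about what the shape is really demanding and how the additive structure of $\wp$ buys it for us.

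First I would reduce to normalizing the coordinate. Since $(K(x)|K,v)$ is immediate and the approximation type of $x$ over $K$ is transcendental, the set $v(x-K)$ has no top element, so I can pick $c\in K$ with $v(x-c)$ large and then $d\in K$ so that $z:=(x-c)/d$ satisfies $vz=0$; note $K[x]=K[z]$ and $K[x]^h=K[z]^h$, so rewriting $f(x)$ as a polynomial $\sum b_i z^i\in K[z]$ costs nothing. The real content is to massage the coefficients so that (i) every nonzero $a_i$ with $i>0$ has $va_i<0$, (ii) coefficients on $p$-power exponents vanish, and (iii) the negative values $va_i$ are pairwise distinct. Here the plan is to exploit two moves repeatedly: the $\wp$-move, $a_i z^i\mapsto a_i z^i - \wp(t)=a_i z^i - (t^p-t)$ for cleverly chosen $t$, which lets me kill or lower terms without changing the class; and, crucially, the fact that on any exponent divisible by $p$, say $i=pj$, the monomial $a_i z^{pj}=(a_i^{1/p}z^j)^p=\wp(a_i^{1/p}z^j)+a_i^{1/p}z^j$, so modulo $\wp$ the term $a_{pj}z^{pj}$ can be traded for the lower-degree term $a_{pj}^{1/p}z^j$. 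This is exactly why a purely inseparable extension $K'|K$ enters: I need $p$-th roots $a_{pj}^{1/p}$ of the coefficients, which may not lie in $K$, but adjoining finitely many of them generates a finite purely inseparable $K'$; and if $K$ is already perfect (or separably tame, hence perfect) these roots are in $K$ and $K'=K$.

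The core of the argument is an iterative cleaning procedure, organized by decreasing exponent or by a well-chosen induction. I would process the polynomial so as to force all three conditions simultaneously:

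\begin{itemize}
\item \emph{Removing $p$-divisible exponents.} Whenever a surviving monomial has exponent $pj$, replace $a_{pj}z^{pj}$ by $a_{pj}^{1/p}z^j$ modulo $\wp(K'(x)^h)$ as above. This strictly decreases the exponent but keeps us in the same class; since degrees are bounded below by $0$, iterating terminates with no positive $p$-divisible exponent left. (Terms with exponent $0$, i.e. the constant, are unconstrained and absorb everything.)
\item \emph{Forcing $va_i<0$ for surviving $i>0$.} If some $a_i z^i$ with $i>0$ has $va_i\geq 0$, then since $vz=0$ this whole monomial lies in $\mathcal O_{K'(x)^h}$, indeed (together with suitable regrouping) in the maximal ideal or in a residue that is a $\wp$-image; using $\mathcal M_{K(x)^h}\subseteq\wp(K(x)^h)$ and the additivity of $\wp$, such nonnegative-value positive-degree contributions can be folded into the class without affecting the remaining structure, so I may assume every positive-degree coefficient that survives has strictly negative value.
\item \emph{Separating the values $va_i$.} If two surviving positive exponents $i\neq i'$ carry coefficients of equal value $va_i=va_{i'}<0$, I use a $\wp$-adjustment: because $\wp(t)=t^p-t$ and for $vt<0$ the dominant term is $t^p$, choosing $t$ to be a monomial in $z$ of appropriate exponent and coefficient lets me subtract a $p$-th power whose leading value coincides with one of the clashing terms, thereby lowering that value and breaking the tie. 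Again this is a strictly value-decreasing move on the ``bad'' data, and a descent argument (values lie in the discretely structured set $vK$, bounded below by the ambient valuation once we fix the degree) guarantees termination. After each such move I re-run the $p$-divisibility cleanup.
\end{itemize}

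The main obstacle I expect is \emph{termination and bookkeeping}: each of the three cleaning moves can reintroduce problems the others just fixed (e.g. trading a $p$-power exponent for a lower one may create a value collision, or a $\wp$-subtraction may produce a new $p$-divisible exponent), so I must set up a single well-founded measure — lexicographic in, say, (number of $p$-divisible positive exponents, then a multiset of the values $va_i$ ordered to detect collisions, then total degree) — that strictly decreases under every move, and verify that no move increases any earlier coordinate of this measure. Organizing the induction so the three operations interleave without looping is the delicate combinatorial heart of the proof; the individual algebraic identities (the $\wp$-additivity trade $a_{pj}z^{pj}\equiv a_{pj}^{1/p}z^j$ and the leading-term analysis of $\wp(t)=t^p-t$ for $vt<0$) are routine. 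Finally, I record that only finitely many $p$-th roots were adjoined, so $K'|K$ is finite purely inseparable, and that perfectness (hence the separably tame case, since separably tame fields are perfect) makes all these roots available already in $K$, giving $K'=K$.
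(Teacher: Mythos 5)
Your first two moves coincide with the ones in the paper's proof: the trade $a_{pj}z^{pj}\equiv a_{pj}^{1/p}z^{j}$ modulo $\wp$ (which is exactly what forces the purely inseparable extension $K'$), and the deletion of positive-value monomials via ${\mathcal M}_{K'(x)^h}\subseteq\wp(K'(x)^h)$. The gap is in your third move, and it is fatal: value collisions among the surviving prime-to-$p$ exponents cannot be broken by $\wp$-subtractions at all. Concretely, if $i$ is prime to $p$ and $t$ is a monomial in $z$, then the dominant part $t^p$ of $\wp(t)=t^p-t$ sits on a $p$-divisible exponent, so it can never cancel or lower the coefficient of $z^i$ (and the candidate $t=a_i^{1/p}z^{i/p}$ is not available because $i/p$ is not an integer). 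More conclusively, for each $i$ with $p\nmid i$ the quantity $\sum_{\nu\geq 0} a_{ip^\nu}^{p^{-\nu}}$ (computed in the perfect hull) is unchanged when one adds any element of $\wp(K'[z])$: adding $c^pz^{pj}-cz^j$ with $j=ip^\mu$ changes it by $(c^p)^{p^{-\mu-1}}-c^{p^{-\mu}}=0$. Hence, modulo terms of positive value, the cleaned-up coefficient on each exponent prime to $p$ is an \emph{invariant} of the class $f+\wp(K'[z])$; no well-founded termination measure or interleaving of your moves can alter its value, so if two of these invariants happen to have equal value (or value exactly $0$) in your initially chosen coordinate, your procedure can never reach the required normal form. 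A related defect sits in your second bullet: a monomial $a_iz^i$ with $va_i=0$ is a unit, and its residue need not lie in $\wp(Kv)$ since the residue field need not be Artin--Schreier closed, so such a term cannot simply be ``folded into the class''.

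What is missing is that distinctness and nonvanishing of the values must be arranged through the choice of the center $c$, not by $\wp$-moves; this is where the transcendental approximation type really enters, whereas you use it only to normalize $vz=0$. The paper's proof Taylor-expands $f$ at a \emph{variable} center $X_0$, performs the $p$-th root reduction symbolically to obtain coefficients $f_j(X_0)+\sum_\nu f_{jp^\nu}(X_0)^{p^{-\nu}}$ on the exponents $j$ prime to $p$, uses the transcendental approximation type to ensure that the values $\beta_j$ of these coefficients at $X_0=c$ are fixed for all $c$ sufficiently close to $x$, and only then chooses $c$: the values of the resulting monomials are $\beta_j+j\,v(x-c)$, affine functions of $v(x-c)$ with pairwise distinct slopes $j$, so since $v(x-K)$ has no largest element one can choose $v(x-c)$ making these values pairwise distinct, nonzero, and different from $vf(c)$. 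Your plan fixes $c$ and $d$ once at the outset and then works entirely inside that fixed coordinate, which forecloses exactly the one degree of freedom that makes the lemma true.
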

\begin{proof}
Set $\deg f = n$. We consider the following Taylor expansion with variables $X$ and $X_0$:
\begin{equation}           \label{TE}
f(X) = \sum_{i=0}^n f_i(X_0)(X - X_0)^i
\end{equation}
where $f_i$ denotes the $i$-th Hasse-Schmidt derivative of $f$. For any $i$
which is divisible by $p$, say $i = p j$, the summand $f_i(X_0)(X - X_0)^i$ in $f(X)$ is equivalent to
\[
f_i(X_0)^{p^{-1}} (X -X_0)^j
\]
modulo $\wp(K_1[X,X_0])$, where
\[
K_1 \>=\> K\left(f_i(X_0)^{p^{-1}}\right)\>.
\]
By a repeated application of this procedure we find that modulo $\wp(K'[X,X_0])$, with $K'|K(X_0)$ a finite purely
inseparable extension, $f(X)$ is equivalent to a polynomial
\begin{equation}              \label{eqp}
f(X_0) + \sum_j{}' \left(f_j(X_0) +\sum_{\nu}{}^{(j)} f_{jp^{\nu}}(X_0)^{p^{-\nu}}\right)(X - X_0)^j\>,
\end{equation}
where:
\mn
$\bullet$ \ $\displaystyle\sum_j{}'$ denotes the sum over all $j\leq n$
with $(p,j) = 1$,\sn
$\bullet$ \ $\displaystyle\sum_{\nu}{}^{(j)}$ denotes the sum over all $\nu\geq 1$ with $jp^{\nu}\leq n$.

\parm
For large enough $\lambda\in\N$, the power
\begin{equation}          \label{^p}
\left(f_j(X_0) + \sum_{\nu}{}^{(j)} f_{jp^{\nu}}(X_0)^{p^{-\nu}}\right)^{p^{\lambda}}
\end{equation}
is a polynomial in $K[X_0]$. Since the approximation type of $x$ over $K$ is transcendental, we may choose
\[
\alpha_0\in v(x-K)      
\]
such that for all $c\in K$ with $v(x-c)\geq\alpha_0$ the value of $f(c)$ as well as the values of~(\ref{^p}) for
$X_0=c$ are fixed, for all $j\leq n$ with $(p,j) = 1$. For those $c$ we set
\begin{equation}
\beta_j \>:=\> v\left(f_j(c) + \sum_{\nu}{}^{(j)} f_{jp^{\nu}}(c)^{p^{-\nu}}\right)\>,
\end{equation}
which is an element of the $p$-divisible hull of $vK$. As the set $\{v(x-c)\mid c\in K\}$ has no greatest element,
we may choose $c$ with $v(x-c)\geq\alpha_0$  such that all values
\begin{equation}                            \label{betaimu}
\beta_j + j\cdot v(x-c)\,,\quad j\leq n\mbox{ with }(p,j)=1,
\end{equation}
are distinct, nonzero, and not equal to $vf(c)$. Having chosen $c$, we choose $d\in K$ such that $vd= v(x-c)$
and put
\[
z = \frac{x-c}{d}\>,
\]
hence $vz = 0$. In the above expressions we now set $X:=x$ and $X_0:=c$. Then $K'$ becomes a finite purely inseparable extension of $K$, and from (\ref{eqp}) we obtain a polynomial that may  be written as a polynomial
in $z$ as follows:
\begin{equation}
\tilde{g}(z) = f(c) + \sum_j {} ' \,b_j z^j
\end{equation}
with coefficients
\[
b_j \>=\> d^j\cdot\left(f_j(c) + \sum_{\nu}{}^{(j)} f_{jp^{\nu}}(c)^{p^{-\nu}}\right)\>,
\]
all of which have nonzero value. We note that $\tilde{g}(z)$ and $f(x)$ are equivalent modulo $\wp(K'[x])$.
\pars
If $vb_i > 0$ for some $i$, then $b_i z^i \in \wp(K'(z)^h)=\wp(K'(x)^h)$.
Consequently, $\tilde{g}(z)$ and thus also $f(x)$ are equivalent modulo $\wp(K'(x)^h)$ to a polynomial
\[
g(z) \>=\> a_n z^n + \ldots  + a_0 \in K'[x]
\]
where
\[
a_i \>=\> \left\{ \begin{array}{ll}
                  b_i & \mbox{if $vb_i < 0$} \\
                  0   & \mbox{otherwise}
                \end{array}
\right\}\qquad \mbox{for $1\leq i\leq n$.}
\]
In both polynomials $\tilde{g}$ and $g$, the coefficients $a_i$ and $b_i$
are equal to zero whenever $p$ divides $i$. On the other hand, the values of the nonzero coefficients $a_i$ for
$i>0$ are just the values given in (\ref{betaimu}), and by our construction, all of these values are distinct,
and different from $va_0\,$.

\parm
It remains to prove the last assertion for separably tame $(K,v)$. By \cite[Corollary~3.12]{[K7]},
such $(K,v)$ lies dense in its perfect hull $K^{1/p^{\infty}}$ and thus also in $(K',v)$. We choose
$a'_0,\ldots,a'_n \in K$ such that
the values $v(a_i-a'_i)$ are sufficiently large, with $a'_i=0$ if $a_i=0$, such that $va'_i<0$ if $va_i<0$ and
$v(g(z)-(a'_n z^n + \ldots  + a'_0))>0$. Then we may replace $g(z)$ by $a'_n z^n + \ldots  + a'_0\,$, hence
w.l.o.g.\ we may assume that $g$ has coefficients in $K$.
\end{proof}

\pars
Now we turn to the mixed characteristic case.
\begin{lemma}                    \label{,nff}
Assume that $\chara K=0$ and that $K$ is closed under $p$-th roots. Then for every $f(x)\in K[x]$ with $vf(x)>0$
there exists a polynomial
\begin{equation}                     \label{,Ke2}
g(z)\in (1+f(x))\cdot (K(x)^h)^p
\end{equation}
satisfying:
\begin{equation}                     \label{,nfh}
\left\{ \begin{minipage}{0.85\textwidth}\noindent
$g(z)=a_n z^n + \ldots  + a_1 z + a_0 \in {\cal M}_K [z] \mbox{ where}$\n
$z = (x - c)/d\,,\mbox{ with } vz = 0\,,\; c\in K \mbox{ and } 0\not= d \in K$,\n
$va_i>\frac{p}{p-1}vp \>\Rightarrow\> a_i=0$, and\n
there is $i_0\in\{1,\ldots,n\}$ with $p\nmid i_0$ such that $a_{i_0}$ is the unique coefficient of least value
among $a_1,\ldots,a_n\,$.
\end{minipage}
\right.
\end{equation}
Moreover, we may assume that $p\nmid j$ whenever $va_j\leq vp$, and we may assume it to hold for all nonzero
$a_j$ if $va_{i_0}>vp$. In the latter case, we may even assume that all nonzero coefficients have distinct value.
\end{lemma}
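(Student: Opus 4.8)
The plan is to run the multiplicative analogue of the Artin--Schreier argument of Lemma~\ref{nff}, using the approximate additivity of the $p$-th power map on $1$-units in place of the exact additivity of $\wp$. As there, I would start from the Taylor expansion $f(X)=\sum_i f_i(X_0)(X-X_0)^i$ around a center $X_0=c\in K$ to be chosen later, rescale by a $d\in K$ with $vd=v(x-c)$, and pass to $z=(x-c)/d$, so that $vz=0$ and the normal form becomes a polynomial in $z$. Because $K$ is closed under $p$-th roots, every coefficient and its $p$-power roots already lie in $K$; this is exactly why, in contrast to the equal-characteristic case, no auxiliary purely inseparable extension $K'$ is required and the conclusion is stated over $K$ itself.

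The heart of the matter is the elimination of the exponents $i$ divisible by $p$. Where the additive argument replaced $f_i(x_0)(x-x_0)^{pj}$ by its $p$-th root modulo $\wp$, here a $p$-th power summand $w^p$ (of exponent $pj$) inside a $1$-unit $1+b+w^p$ is replaced, modulo $(K(x)^h)^p$, by $-pw=\Delta(w^p)$ (of exponent $j$), via part~c) of Lemma~\ref{1+y} in the form~(\ref{Delta^-1}); iterating this $\Delta$-reduction and tracking values through $\delta$ divides a $p$-divisible exponent down until it becomes prime to $p$. Two thresholds organize the whole process and account for the clauses of the normal form. Above the fixed point $\frac{p}{p-1}vp$ of $\delta$, a summand is simply a $p$-th power that may be discarded by part~a) of Lemma~\ref{1+y}, which gives ``$va_i>\frac{p}{p-1}vp\Rightarrow a_i=0$''. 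The $\Delta$-reduction, on the other hand, requires $vw^p>vp$, so it is available only for summands of value exceeding $vp$; since $\delta$ moves such values upward toward the fixed point, the reduction keeps them above $vp$ and terminates with a coprime exponent. This yields the ``moreover'' clauses: when the least value $va_{i_0}$ already exceeds $vp$ every summand can be reduced, so all exponents become prime to $p$, and a final spreading of values makes them distinct.

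With the exponents so controlled, I would invoke the transcendental approximation type of $x$ over $K$ exactly as in Lemma~\ref{nff}: choose $\alpha_0\in v(x-K)$ stabilizing the relevant values, then choose $c$ with $v(x-c)\geq\alpha_0$ so that the values of the surviving coefficients become pairwise distinct. Distinctness forces a unique coefficient $a_{i_0}$ of least value, and one must check that its exponent is prime to $p$: for $va_{i_0}>vp$ this is furnished by the $\Delta$-reduction above, while the low-value range ($\leq vp$), where the $\Delta$-reduction is unavailable, is governed instead by the choice of center and scaling, arranged so that the summands of value at most $vp$ all sit at exponents prime to $p$. This is the content of the clause ``$p\nmid j$ whenever $va_j\leq vp$''.

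The step I expect to be the main obstacle is precisely this value bookkeeping across the two thresholds. One has to verify the hypotheses $vb\geq\frac{1}{p-1}vp$ and $vw^p>vp$ of Lemma~\ref{1+y}c) at every stage of the $\Delta$-iteration, confirm that the error terms hidden in the merely approximate additivity of the $p$-th power map always remain of sufficiently high value to be absorbed, and guarantee that reducing one $p$-divisible exponent neither collides with nor destroys the distinctness already arranged for the other values. Most delicate of all is the frozen range of value $\leq vp$, where no reduction identity is available and one must instead extract coprimality from the geometry of the Taylor coefficients via the transcendental approximation type. It is this coexistence of genuinely multiplicative $p$-th powers with additivity that holds only modulo higher-value terms that makes the mixed-characteristic argument substantially more intricate than its equal-characteristic counterpart.
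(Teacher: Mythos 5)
There is a genuine gap, and it sits exactly where you predicted the ``main obstacle'' would be: the range of values $\leq vp$. Your proposal has only two mechanisms for eliminating exponents divisible by $p$ --- discarding summands of value $>\frac{p}{p-1}vp$ via part~a) of Lemma~\ref{1+y}, and the $\Delta$-reduction via part~c), which requires the summand to have value $>vp$. For a monomial $f_i(c)(x-c)^i$ with $p\mid i$ whose value lies between the minimal value and $vp$, neither tool applies, and your claim that such monomials can be avoided ``by the choice of center and scaling'' is unsupported and false in general: the values $vf_i(c)$ stabilize once $v(x-c)$ is large enough in $v(x-K)$, and $v(x-K)$ need not be cofinal in $vK$ (the immediate extension need not lie in the completion), so if $v(x-K)$ is bounded by a small element these monomials remain stuck below $vp$ for every admissible center. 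The paper's proof contains a third, genuinely multiplicative mechanism that your outline is missing: using that $K$ is closed under $p$-th roots and has rank~1, one forms $u=\sum_{p\mid i}d_i^{1/p}y^{i/p}$ and a polynomial $s(y)\equiv(1+u)^{-1}\bmod p{\cal M}_K[y]$ via the geometric series, and replaces $1+f$ by $s(y)^p(1+f)$, which is legitimate modulo $(K(x)^h)^p$. This ``completion of the $p$-th power'' wipes out \emph{all} $p$-divisible exponents at once, regardless of their value, leaving only error terms in $p{\cal M}_K[y]$, i.e.\ of value $>vp$; only after this step does the two-threshold bookkeeping you describe (the case distinction according to whether some surviving monomial has value $\leq vp$, with (\ref{,ij}) forcing the exponent-$1$ monomial to be the unique minimal one in that case) actually go through.

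A second, smaller deficiency: you flag, but do not resolve, the collision problem in the $\Delta$-iteration. After replacing $h_{ip^m}(c)(y-c)^{ip^m}$ by $\Delta^m(1)h_{ip^m}(c)^{1/p^m}(y-c)^i$, the new coefficient $r_i(c)=\sum_m\Delta^m(1)h_{ip^m}(c)^{1/p^m}$ is \emph{not} a polynomial in $c$, so the transcendental approximation type of $x$ cannot be invoked directly to fix its value; two contributions to the same exponent may have equal value and cancel to higher order. The paper handles this with the notion of admissible pairs $(h^*(Y,Z),\alpha)$ and a minimality argument on the number of monomials: a value collision $\delta^\ell(vh_{ip^\ell}(c))=\delta^m(vh_{ip^m}(c))$ permits an application of $\Delta^{-1}$ (via (\ref{Delta^-1})) that merges two monomials of $h^*$ into one at the polynomial level, contradicting minimality. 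Without both of these ingredients --- the $s(y)^p$ trick for the low range and the admissible-pair argument for value stability --- the proposal does not yield the normal form, in particular not the clause ``$p\nmid j$ whenever $va_j\leq vp$''.
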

\begin{proof}
We will alter the polynomial $f(x)$ in several subsequent steps.

We again use the Taylor expansion (\ref{TE}). As before,
since the approximation type of $x$ over $K$ is transcendental, we may choose $\alpha_0\in v(x-K)$
such that for all $c\in K$ with $v(x-c)\geq\alpha_0$ the values of $f_i(c)$ are fixed, for every $i$. As the set
$v(x-K)$ has no greatest element, we may choose
$\alpha_0$ so large that for all $c$ with $v(x-c)\geq\alpha_0$ the values of all monomials $f_i(c)(x - c)^i$
will be distinct and there is $i_0\geq 1$ such that
\[
 vf_{i_0}(c)(x - c)^{i_0}\><\>vf_i(c)(x - c)^i\quad \mbox{ for all $i\geq 1$, $i\ne i_0$}
\]
(cf.\ \cite[Lemma~5.1 and Lemma~5.2] {[K--V]}). Hence we obtain from the Taylor expansion:
\[
v(f(x)-f(c))\>=\>v\sum_{i=1}^n f_i(c)(x - c)^i\>=\> vf_{i_0}(c)(x - c)^{i_0}\>.
\]
By our choice of $c$, the values $vf(c)$ and $vf_{i_0}(c)$ are fixed, and also $vf(x)$ is a constant. But as $v(x-K)$ has no maximal element, the value of the right hand side is not fixed. This can only be if $vf(x)=vf(c)$, and we obtain that for large enough $\alpha_0\,$,
\[
0\><\>vf(x)\>=\>vf(c)\><\>vf_i(c)(x - c)^i \quad \mbox{ for $1\leq i\leq n$.}
\]
That is, $f_i(c)(x - c)^i\in {\cal M}_{K(x)}$ for all $i\geq 0$.

\pars
For large enough $\alpha_0$ we can further assume: if $j=p^t$ and $i=p^t r$ with $t\geq 0$ and $r>1$, $(r,p)=1$,
then
\begin{equation}                      \label{,ij}
vf_j(c)(x - c)^j\><\>vf_i(c)(x - c)^i
\end{equation}
(unless both values are equal to $\infty$). This is shown in \cite[Lemma 7]{[Ka]}
(see also \cite[Proposition~7.4]{[K--V]}).

\pars
Fix any $c_1\in K$ with $v(x-c_1)\geq\alpha_0$. As $(K(x)|K,v)$ is immediate, there is some
$a\in K$ with $v((x-c_1)/a)=0$. We set $y=(x-c_1)/a$ and $d_i=f_i(c_1) a^i$, so that
\[
f(x)\>=\> \sum_{i=0}^n d_i y^i\>.
\]
Note that $K[x]=K[y]$ and that $vy=0$, whence $vd_iy^i=vd_i>0$.

For every $i$ divisible by~$p$, we choose $d_i^{1/p}$ to be any $p$-th
root of $d_i$ in $K$ (see our discussion following Lemma~\ref{1+y}); this is possible
since $K$ is closed under $p$-th roots by assumption. Then we have:
\begin{equation}              \label{,i/p}
d_i^{1/p} y^{i/p} \in {\cal M}_K [y] \subseteq {\cal M}_{K(y)}\;.
\end{equation}
We choose a polynomial $s(y)\in K[y]$ such that
\[
s(y) \>\equiv\> (1 + \sum_{p|i} d_i^{1/p} y^{i/p})^{-1}
\;\;\mbox{\rm mod } p {\cal M}_K [y]\;;
\]
this can be done using the geometrical series of the right hand side together with our assumption that the rank of
\mbox{$(K,v)$} is 1. Note that $vs(y)=0$ and the constant term of $s(y)$ as a polynomial in $y$ is a $1$-unit,
so also the constant term of $s(y)^p$ is a $1$-unit. We have that
\[
s(y)^p \>\equiv\> (1 + \sum_{p|i} d_i^{1/p} y^{i/p})^{-p} \;\;\mbox{\rm mod } p^2{\cal M}_K [y]
\]
and
\begin{eqnarray*}
s(y)^p(1+f(x)) & = & s(y)^p\left(1+\sum_{p|i} d_iy^i\,+\,\sum_{p\nmid i} d_iy^i\right)\\
 & = & s(y)^p\left[(1+\sum_{p|i}d_i^{1/p}y^{i/p})^p\,+\,\tilde{f}(y)\,+\,
\sum_{p\nmid i} d_iy^i\right]
\end{eqnarray*}
with
\[
\tilde{f}(y)\in p{\cal M}_K [y]\>.
\]
Modulo $p^2{\cal M}_K [y]$, $s(y)^p(1+f(x))$ is hence equivalent to
\[
1\,+\,s(y)^p\tilde{f}(y) \,+\, s(y)^p\sum_{p\nmid i} d_iy^i \>.
\]
In (\ref{,Ke2}) we can replace $1+f$ by $s(y)^p(1+f)$ without changing the right hand side. Since
$vp^2=2vp\geq\frac{p}{p-1}vp$ for all primes $p$, part a) of Lemma~\ref{1+y} shows that we can further
replace $s(y)^p(1+f)$ by $1+\tilde{g}(y)$ with
\begin{equation}              \label{case1}
\tilde{g}(y) \>:=\> s(y)^p\tilde{f}(y)+s(y)^p\sum_{p\nmid i} d_iy^i \>.
\end{equation}

\pars
Now we distinguish two cases. Let us assume first that there is a monomial in the polynomial
that has a value $\leq vp$. Since $\tilde{f}(y)\in p{\cal M}_K [y]$ and $vs(y)^p=pvs(y)=0$, we find that also
$s(y)^p\tilde{f}(y)\in p{\cal M}_K [y]$. So the momomials of value $\leq vp$ must come from the sum
$\sum_{p\nmid i} d_iy^i$, which consequently is nonempty. By (\ref{,ij}), the monomial $d_1y$ is the unique
one of minimal value in this sum. Hence $vd_1y\leq vp$.

Let $d'_1$ be the coefficient of $y$ in $\tilde{g}(y)$. Since $s(y)^p\tilde{f}(y)\in p{\cal M}_K[y]$ and the 
constant term of $s(y)^p$ as a polynomial in $y$ is a $1$-unit, it follows that $v(d'_1-d_1)>vd_1$,
whence $vd'_1=vd_1$, and that $d'_1y$ is the unique summand of minimal value in $\tilde{g}(y)$. Every summand of
value greater than $\frac{p}{p-1}vp$ can be deleted by part a) of Lemma~\ref{1+y}. Setting $z=y$, $c=c_1$ and
$d=a$, we arrive at a polynomial $g(z)=\tilde{g}(y)$ and elements $c$ and $d$ which satisfy the assertion
of our lemma in the first case.

\parb
Now we consider the second case: all monomials in $\tilde{g}(y)$ have value $>vp$. Since $vy=0$, this implies that
$\tilde{g}(Y)\in p{\cal M}_K [Y]$.

We will work with polynomials of the following form:
\begin{equation}           \label{+TE}
h^*(Y,Z) \>=\> \sum_{i=0}^n h_i(Z)(Y - Z)^i \>\in\> p{\cal M}_K [Y,Z] \>.
\end{equation}
Note that here we use ``$i$'' only in the sense of an ordinary index, and not to denote a Hasse-Schmidt derivative.

For $h^*(Y,Z)$ as in (\ref{+TE}) and $\alpha\in v(y-K)$ we will call $(h^*(Y,Z),\alpha)$ an
\bfind{admissible pair} if for all $c\in K$ with $v(y-c)\geq\alpha$,
\sn
a) \ the values $vh_i(c)$ are fixed for all $i$,
\n
b) \ the values of the nonzero summands $h_i(c)(y-c)^i$ are distinct,
\n
c) \ $1+ h^*(y,c) \>\in\> (1+f)\cdot (K(x)^h)^p$.
\sn
Then for large enough $\alpha$, $(\sum_{i=0}^n \tilde{g}_i(Z)(Y - Z)^i,\alpha)$, where in this case each
$\tilde{g}_i$ denotes the $i$-th Hasse-Schmidt derivative of $\tilde{g}$, is an admissible pair.

\pars
Let us fix an admissible pair $(h^*(Y,Z),\alpha)$ for which the number of nonzero monomials in $h^*(Y,Z)$ is the
smallest possible. Take any $c\in K$ such that $v(y-c)\geq\alpha$. If $m\geq 1$, $(p,i)=1$, and the summand
$h_{ip^m}(c)(y-c)^{ip^m}$ is nonzero, then we use part c) of Lemma~\ref{1+y} $m$ many times to replace this
summand by
\[
\Delta^m (h_{ip^m}(c)(y-c)^{ip^m}) \>=\> \Delta^m(1)\cdot h_{ip^m}(c)^{1/p^m}\,(y-c)^i\>.
\]
In this way we turn $h^*(y,c)$ into a polynomial
\begin{equation}
h_0(c)+\sum_{1\leq i\leq n\,,\,p\nmid i} r_i(c)(y-c)^i
\end{equation}
where
\[
r_i(c)\>=\> \sum_{m\geq 0\,,\, ip^m\leq n} \Delta^m(1)\cdot h_{ip^m}(c)^{1/p^m} \>\in\> K\>.
\]
Note that $r_i(Y)$ is not necessarily a polynomial. Nevertheless, we wish to show that for each $i$ the value
$vr_i(c)$ is fixed for all $c\in K$ with $v(y-c)\geq\alpha$. We observe that the values
\[
v\,h_{ip^m}(c)^{1/p^m} \>=\> \frac{1}{p^m} v h_{ip^m}(c)
\]
are fixed for all those $c$. Therefore it suffices to show that the values
\[
v \Delta^m (h_{ip^m}(c)) \>=\> \delta^m (v h_{ip^m}(c))\,,\quad m\geq 0\,,\, ip^m\leq n
\]
are distinct because then $vr_i(c)$ is equal to the minimum of these values and is consequently fixed for our
choices of $c$.

Suppose that there are $\ell>m\geq 0$ with $ip^\ell\leq n$ such that $\delta^\ell (v h_{ip^\ell}(c))=
\delta^m(v h_{ip^m}(c))$. This implies that
\[
\delta^{m-\ell} (v h_{ip^m}(c)(y-c)^{ip^m}) \>=\> v h_{ip^\ell}(c)(y-c)^{ip^\ell}\> > vp\>.
\]
Thus by (\ref{Delta^-1}), $\Delta^{-1}$ can be applied $\ell -m$ many times in order to replace the summand
$h_{ip^m}(c)(y-c)^{ip^m}$ by $\Delta^{m-\ell}(1)\cdot h_{ip^m}(c)^{p^{\ell-m}}(y-c)^{ip^\ell}$.
Similarly, we replace the monomial $h_{ip^m}(Z)(Y-Z)^{ip^m}$ by $\Delta^{m-\ell}(1)\cdot
h_{ip^m}(Z)^{p^{\ell-m}}(Y-Z)^{ip^\ell}$ in $h^*(Y,Z)$; this is possible since $h_{ip^m}(Z)^{p^{\ell-m}}$ is again
a polynomial in $Y$. This procedure turns the coefficient of $(Y-Z)^{ip^m}$ into $0$ while the coefficient of
$(Y-Z)^{ip^\ell}$ becomes $\Delta^{m-\ell}(1)\cdot h_{ip^m}(Z)^{p^{\ell-m}}+h_{ip^\ell}(Z)$. So the new polynomial
$\tilde{h}^*$, say, has less monomials than $h^*$.

We choose $\alpha'\in v(y-K)$ so large that also the value $v(\Delta^m(1)h_{ip^\ell}(c)^{p^{\ell-m}}+h_{ip^m}(c))$
of the new coefficient is fixed and the values of all summands are distinct whenever $v(y-c)\geq\alpha'$. Then
$(\tilde{h}^*(Y,Z),\alpha')$ is an admissible pair, contradicting the minimality of $(h^*(Y,Z),\alpha)$. This
completes the proof of the fact that the values of the coefficients $r_i(c)$ are fixed for all $c\in K$ with
$v(y-c)\geq\alpha$.

Again, as $v(y-K)$ has no largest element, we can choose some $c_2\in K$ with
$v(y-c_2)$ so large that the values $vr_i(c_2)(y-c_2)^i$ are distinct.
As in the first case, every summand of value greater than $\frac{p}{p-1}vp$ can be deleted by part a) of
Lemma~\ref{1+y}. As $(K(x)|K,v)$ is immediate, there is some $b\in K$ with $v((y-c_2)/b)=0$. We set $z=(y-c_2)/b$
and $a_i=r_i(c_2)b^i\in {\mathcal M}_K\,$. Then $vz=0$. Further, $z=(x-c)/d$ for $c=c_1+ac_2$ and $d=ab$.
Hence also in the second case, the assertions of our lemma are satisfied.
\end{proof}

\begin{remark}
In the first case, the proof yields $a_1$ to be the coefficient of minimal value. This can also be achieved in
the second case by an application of \cite[Lemma 7.6]{[K--V]}. But then apparently we may not achieve that only
those coefficients $a_i$ are nonzero for which $p\nmid i$.
\end{remark}

\begin{remark}                     \label{remershov}
The original proof given in \cite{[K1]} for the second case contained a mistake, which was noticed by Yuri Ershov.
In the paper \cite{[Er]} Ershov suggests an improved approach and fills the gap. We have taken over from this paper the
very helpful instrument of the function $\delta$, as well as the idea to consider ``admissible pairs''. However, we have
chosen an enhanced definition of ``admissible pair'' and have replaced Ershov's concept of ``normal pair'' by working with admissible pairs with a minimal number of monomials, which simplifies the proof.
\end{remark}

%
%
\subsection{Structure of $E$ under suitable assumptions on $K$}               
In order to apply the normal forms that we have found to determine the structure of the Galois extensions in
question, we need some preparations. First, we will need to show that the condition on the approximation type of
the element $x$ is satisfied in the situations we are going to consider.

\begin{lemma}                      \label{transat}
Take an immediate transcendental extension $(K(x)|K,v)$ and assume that $K$ is separable-algebraically maximal.
Then the approximation type of $x$ over $K$ is transcendental.
\end{lemma}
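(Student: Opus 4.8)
The plan is to prove the contrapositive: assuming that the approximation type of $x$ over $K$ is \emph{not} transcendental, I will produce a proper immediate separable-algebraic extension of $(K,v)$, contradicting separable-algebraic maximality. First I would translate the hypothesis into the language of pseudo-Cauchy sequences. Since $(K(x)|K,v)$ is immediate and $x\notin K$, the set $v(x-K)$ has no largest element (by \cite[Theorem 1]{[Ka]}), so I may choose a sequence $(c_\nu)$ in $K$ with $v(x-c_\nu)$ strictly increasing and cofinal in $v(x-K)$; then $x$ is a pseudo-limit of $(c_\nu)$, and this sequence has no pseudo-limit in $K$. Unravelling the definitions, the approximation type of $x$ is transcendental precisely when $vh(c_\nu)$ is eventually constant for every $h\in K[X]$, i.e.\ exactly when $(c_\nu)$ is of \emph{transcendental type} in the sense of Kaplansky. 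Hence the negation of the conclusion says that $(c_\nu)$ is of \emph{algebraic type}.

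Next I would invoke Kaplansky's structure theory for pseudo-Cauchy sequences. An algebraic-type sequence without pseudo-limit in $K$ possesses an irreducible associated minimal polynomial $g$ of least degree with $vg(c_\nu)$ eventually strictly increasing, and adjoining a root $a$ of $g$ (itself a pseudo-limit of $(c_\nu)$) yields an immediate algebraic extension $(K(a)|K,v)$ with $[K(a):K]=\deg g\geq 2$. If $g$ is separable, then $(K(a)|K,v)$ is a proper immediate \emph{separable}-algebraic extension, which is impossible since $K$ is separable-algebraically maximal. In particular this already disposes of the case $\chara K=0$, where every irreducible polynomial is separable.

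The main obstacle is the case $\chara K=p>0$ with $g$ inseparable, say $g(X)=g_1(X^p)$: now $K(a)|K$ may be (partly) inseparable and does not by itself contradict separable-algebraic maximality. To handle this I would descend to $p$-th powers. The element $x^p$ again generates an immediate transcendental extension, $(c_\nu^p)$ is a pseudo-Cauchy sequence with $vg_1(c_\nu^p)=vg(c_\nu)$, and $g_1(a^p)=0$, so $a^p$ is a candidate approximation to $x^p$ that is closer than every element of $K$. The key observation is that, as long as $\deg g>p$, no element of $K$ can approximate $x^p$ better than $a^p$: otherwise its $p$-th root would approximate $x$ better than $a$ while having degree $p<\deg g$ over $K$, contradicting the minimality of $g$ (or, if that $p$-th root lay in $K$, contradicting the fact that $v(x-K)$ has no largest element). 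Thus $a^p$ witnesses that $x^p$ is again of algebraic type, with associated minimal polynomial $g_1$ of strictly smaller degree $\deg g/p$. Iterating strictly decreases the degree, so after finitely many steps the associated minimal polynomial becomes separable, yielding the desired contradiction.

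I expect the genuinely delicate point to be the endpoint of this descent, when the degree has dropped to $p$, i.e.\ $g=X^p-b$ with purely inseparable root $a=b^{1/p}$: here $a^p=b$ lies in $K$, so the naive descent stalls. The remedy is to use once more that $v(x^p-K)$ has no largest element in order to find elements of $K\setminus K^p$ approximating $x^p$ arbitrarily well; their $p$-th roots then approximate $x$ cofinally, which amounts to passing to the perfect hull $K^{1/p^\infty}$ and exploiting that the separable-algebraic maximality of $K$ forces the maximality one needs there. Carrying out this last reduction cleanly — keeping precise track of which approximations remain strictly closer than the whole current base field — is the technical heart of the argument; the remainder is the formal dictionary between approximation types and pseudo-Cauchy sequences together with a direct appeal to Kaplansky's theory.
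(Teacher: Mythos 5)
Your strategy is genuinely different from the paper's: the paper does not reprove the key fact at all, but imports it from the proof of \cite[Proposition~3.10]{[KPal]} (namely, that under these hypotheses every pseudo Cauchy sequence in $(K,v)$ with limit $x$ and without limit in $K$ is of transcendental type), and only supplies the translation between pseudo Cauchy sequences and approximation types. You instead attempt a self-contained proof from Kaplansky's theory. Your translation, your separable case, and your degree-lowering descent are essentially sound (the descent can even be shortcut: writing $g=g_s(X^{p^e})$ with $g_s$ separable irreducible, the root $a$ of $g$ already gives the immediate subextension $K(a^{p^e})|K$, which is separable and proper whenever $\deg g_s\geq 2$). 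The gap is the purely inseparable endpoint $g=X^{p^e}-b$, which is exactly the case where separable-algebraic (as opposed to algebraic) maximality has no direct bite, and which you yourself flag as unproven. The remedy you sketch fails on two counts. First, nothing establishes that separable-algebraic maximality is inherited by the perfect hull $K^{1/p^{\infty}}$; this is not a formal consequence of the hypothesis. Second, and worse, even a full success over $K^{1/p^{\infty}}$ would yield nothing over $K$: in the situation at hand $a=b^{1/p^e}$ is closer to $x$ than every element of $K$, so the balls witnessing stabilization of values over the perfect hull can have radii exceeding all of $v(x-K)$, and the transcendence of the approximation type over the larger field is then vacuous when restricted to centers in $K$. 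So the conclusion does not descend along $K^{1/p^{\infty}}|K$, and this route cannot be repaired by bookkeeping.

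What the endpoint case actually requires is a different idea (and it is the one used in the proof of \cite[Proposition~3.10]{[KPal]}): make the inseparable polynomial separable by a small linear perturbation. Concretely, since both $x$ and $a=b^{1/p^e}$ are limits of the sequence $(c_\nu)$, one has $v(x-a)\geq\gamma_\nu:=v(x-c_\nu)$ for all $\nu$, while $p^e\,v(x-a)=v(x^{p^e}-b)\in vK(x)=vK$ because $(K(x)|K,v)$ is immediate; an element of $\frac{1}{p^e}vK$ cannot dominate a cofinal subset of $vK$, so the set $\{\gamma_\nu\}$ is bounded above in $vK$, say by $\delta$. Since $X$ has degree $<p^e$, the values $vc_\nu$ are eventually constant, say $=\rho$, so one can choose $d\in K$ with $vd+\rho\geq p^e\delta>p^e\gamma_\nu$ for all $\nu$. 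Then $h(X)=X^{p^e}-dX-b$ satisfies $vh(c_\nu)=p^e\gamma_\nu$, strictly increasing; it has the minimal degree $p^e$, hence is irreducible by your factorization argument, and it is separable because $h'=-d\neq 0$. Kaplansky's Theorem~3 \cite{[Ka]} applied to $h$ now produces a proper immediate \emph{separable}-algebraic extension of $(K,v)$, contradicting separable-algebraic maximality. Without this (or an equivalent) argument, your proof covers only the cases that the hypothesis handles directly, and the case that makes the lemma nontrivial remains open.
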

\begin{proof}
It is shown in the proof of \cite[Proposition~3.10]{[KPal]} that under the assumptions of our lemma, the following
holds: every pseudo Cauchy sequence in $(K,v)$ with limit $x$ and without a limit in $K$ is of transcendental
type; for these notions, see \cite{[Ka]}. This implies (and in fact is equivalent to) that the approximation type
of $x$ over $K$ is transcendental. Indeed, if it were not, then one could construct a pseudo Cauchy sequence in
$(K,v)$ with limit $x$ and without a limit in $K$ of transcendental type by (possibly transfinite) induction,
since there will be some $f\in K[X]$ such that for every $c\in K$ there is $c'\in K$ with $v(x-c')>v(x-c)$ and
$vf(c')\ne vf(c)$.
\end{proof}

We will also need the following result which is a consequence of \cite[Theorem 9.1 in conjunction with Corollary
7.7]{[K--V]}. A direct proof can also be found in \cite{[Er]}.
\begin{lemma}                          \label{zh=fzh}
Assume that the extension $(K(z)|K,v)$ is immediate with $vz=0$, and that the approximation type of $z$ over $K$
is transcendental. Take a polynomial $f(X)= a_nX^n+\ldots+a_0\in K[X]$ for which there is an index
$i_0\in\{1,\ldots,n\}$ with $p\nmid i_0$ such that $a_{i_0}$ is the unique coefficient of least value
among $a_1,\ldots,a_n$. Then
\[
K(z)^h \>=\> K(f(z)^h)\>.
\]
\end{lemma}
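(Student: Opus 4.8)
The plan is to establish the nontrivial inclusion $K(z)^h\subseteq K(f(z))^h$ by exhibiting $z$ itself as a root of $f(X)-f(z)$ inside the henselian field $K(f(z))^h$, produced by Hensel's Lemma from a well-chosen approximation $c_0\in K$. Write $w=f(z)$. Since $z$ is transcendental over $K$ and $\deg f\geq i_0\geq 1$, also $w$ is transcendental, the extension $K(z)|K(w)$ is finite (of degree $\deg f$), and by Lemma~\ref{FhFEh} so is $K(z)^h|K(w)^h$. As $w=f(z)\in K(z)$ we have $K(w)\subseteq K(z)$ and hence $K(w)^h\subseteq K(z)^h$ for free, so the whole statement reduces to proving $z\in K(w)^h$.

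First I would pin down two values. Because $vz=0$ we have $v(a_iz^i)=va_i$, so the hypothesis that $a_{i_0}$ is the unique coefficient of least value among $a_1,\dots,a_n$ makes $a_{i_0}z^{i_0}$ the unique monomial of minimal value; set $\gamma:=va_{i_0}$. The decisive point is the derivative: in $f'(z)=\sum_{i\geq 1} i\,a_i z^{i-1}$ the term with $i=i_0$ has value $v(i_0)+va_{i_0}=\gamma$, since $p\nmid i_0$ forces $v(i_0)=0$ in residue characteristic $p$, whereas every other nonzero term has value $v(i)+va_i\geq va_i>\gamma$. Hence $vf'(z)=\gamma$, with the $i_0$-term strictly dominating. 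This balance $vf'(z)=\gamma=va_{i_0}$ is exactly what will drive the Newton estimate, and it is here that both hypotheses, $p\nmid i_0$ and the uniqueness of the minimal coefficient, are indispensable.

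Next I would use the transcendental approximation type to manufacture the starting point. Applying its defining property to the finitely many Taylor coefficients $f_k$ of $f$ (as in the expansion~(\ref{TE}), with $f_0=f$ and $f_1=f'$), I obtain $\alpha_0\in v(z-K)$ such that for all $c\in K$ with $v(z-c)\geq\alpha_0$ the values $vf_k(c)$ are fixed. A short Taylor computation around such a $c$ then gives $vf'(c)=vf'(z)=\gamma$ and, writing $w-f(c)=\sum_{k\geq 1}f_k(c)(z-c)^k$, shows that the first-order term dominates, so $v(w-f(c))=\gamma+v(z-c)$. Since $v(z-K)$ has no largest element, I can choose $c_0\in K$ with $\rho:=v(z-c_0)$ as large as I please, in particular with $\rho>\gamma$.

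The endgame is then Hensel's (Newton's) Lemma applied to $g(X):=f(X)-w\in K(w)[X]$ over the henselian field $K(w)^h$ at the point $c_0$. The estimates above give $v(g(c_0))=\gamma+\rho>2\gamma=2\,vg'(c_0)$, so there is a unique root $\zeta\in K(w)^h$ with $v(\zeta-c_0)>vg'(c_0)=\gamma$, and in fact $v(\zeta-c_0)=\rho$. But $z$ is also a root of $g$ with $v(z-c_0)=\rho>\gamma$, so the uniqueness clause forces $\zeta=z$; thus $z\in K(w)^h=K(f(z))^h$, which together with the reduction above yields $K(z)^h=K(f(z))^h$. The main obstacle is not the Hensel step, which is routine, but the value bookkeeping of the third paragraph: one must guarantee that replacing $z$ by a nearby $c\in K$ does not disturb the equalities $vf'(c)=\gamma$ and $v(w-f(c))=\gamma+v(z-c)$, and it is precisely the transcendental approximation type that freezes the relevant values and makes this possible.
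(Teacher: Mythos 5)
Your route is genuinely different from the paper's: the paper gives no proof of Lemma~\ref{zh=fzh} at all, but quotes it as a consequence of Theorem~9.1 and Corollary~7.7 of \cite{[K--V]} (the machinery of relative approximation degrees), pointing to \cite{[Er]} for a direct proof. Your argument is a self-contained Newton's-lemma proof, and its core is correct: the reduction to showing $z\in K(w)^h$ for $w=f(z)$, the computation $vf'(z)=va_{i_0}=\gamma$ (exactly where $p\nmid i_0$ and the uniqueness of the minimal coefficient enter), the freezing of the finitely many values $vf_k(c)$ by the transcendental approximation type, and the resulting estimates $vf'(c_0)=\gamma$ and $v(w-f(c_0))=\gamma+\rho$ for $\rho=v(z-c_0)$ large. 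What the paper's citation buys is generality (the relative approximation degree controls $[K(z)^h:K(f(z))^h]$ also when the hypothesis on $i_0$ fails); what your proof buys is that it is elementary and avoids importing that theory.

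Two steps need repair, one cosmetic and one real. Cosmetic: Newton's lemma, as stated for instance in \cite{[Eng--P]}, requires the polynomial to have coefficients in the valuation ring, and $g(X)=f(X)-w$ need not, since $\gamma$ may be negative. Replace $g$ by $a_{i_0}^{-1}g$: its coefficients $a_{i_0}^{-1}a_i$ ($i\geq 1$) have value $\geq 0$, its constant term $a_{i_0}^{-1}(a_0-w)$ has value $0$ because $v(w-a_0)=v\bigl(\sum_{i\geq 1}a_iz^i\bigr)=\gamma$, and $c_0$ is integral once $\rho>0$; the Newton hypothesis then reads simply $\rho>0$. The real issue is your final sentence: the uniqueness clause of the lemma applied over $K(w)^h$ quantifies only over roots lying \emph{in} $K(w)^h$, whereas $z$ is a priori only in $K(z)^h$; concluding $\zeta=z$ from that clause is circular, since $z\in K(w)^h$ is precisely what is being proved. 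The fix is immediate: apply the lemma a second time over the henselian field $K(z)^h\supseteq K(w)^h$ with the same data and the same $c_0$. Its uniqueness clause now covers both $\zeta$ and $z$, which are roots in $K(z)^h$ with $v(\cdot-c_0)>vg'(c_0)$, so $z=\zeta\in K(w)^h$. (Equivalently, invoke the strong form of uniqueness: the Newton root is the only root of $g$ in the algebraic closure at distance more than $vg'(c_0)$ from $c_0$.) With these two changes your proof is complete.
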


\pars
Now we can prove:
\begin{proposition}                     \label{xh=fxheq}
Take a separably tame valued field $(K,v)$ of characteristic $p>0$ and rank 1, an immediate
transcendental extension $(K(x)|K,v)$, and a Galois extension $E$ of $K(x)^h$ of degree $p$.
Then there exists $\vartheta\in E$ such that $E=K(\vartheta)^h$.
\end{proposition}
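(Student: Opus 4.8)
The plan is to reduce the statement to the normal‑form lemmas of the previous subsection, using the structural fact that $(K,v)$ separably tame of rank $1$ makes the approximation type hypotheses available. First I would invoke Lemma~\ref{transat}: since $(K,v)$ is separably tame it is separable‑algebraically maximal, hence the approximation type of $x$ over $K$ is transcendental. This is exactly the running hypothesis under which the normal‑form lemmas of Section~\ref{sectnf} operate, so both apply to our $E|K(x)^h$.

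Next I would split into the equal‑ and mixed‑characteristic cases, but since $\chara K=p>0$ here we are in the equal‑characteristic situation: $E|K(x)^h$ is an Artin–Schreier extension, and by~(\ref{ASextf}) we may take $E=K(x)^h(\vartheta)$ with $\vartheta^p-\vartheta=f(x)\in K[x]$. I would then apply Lemma~\ref{nff} to $f(x)$. Because $(K,v)$ is separably tame, the lemma's final clause lets us take $K'=K$, so we obtain a polynomial $g(z)\in f(x)+\wp(K(x)^h)$ of the prescribed normal form, where $z=(x-c)/d$ has $vz=0$ and $K(z)=K(x)$. Crucially, $g(z)$ has a unique coefficient $a_{i_0}$ of least value among $a_1,\ldots,a_n$ with $p\nmid i_0$. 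Since replacing $f(x)$ by an equivalent element modulo $\wp(K(x)^h)$ does not change the extension, we have $E=K(x)^h(\vartheta')$ with $\vartheta'^p-\vartheta'=g(z)$.

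Now the endgame is to recognize $E$ as a henselization of a simple transcendental extension. I would apply Lemma~\ref{zh=fzh} to $g$ and $z$: the extension $(K(z)|K,v)=(K(x)|K,v)$ is immediate with $vz=0$ and transcendental approximation type, and $g$ has exactly the unique‑least‑coefficient property the lemma requires. This yields $K(z)^h=K(g(z))^h$, i.e.\ $K(x)^h=K(g(z))^h$. Setting $\vartheta:=\vartheta'$, we then compute $K(\vartheta)\supseteq K(\vartheta^p-\vartheta)=K(g(z))$, so passing to henselizations gives $K(\vartheta)^h\supseteq K(g(z))^h=K(x)^h$, whence $\vartheta\in K(\vartheta)^h\supseteq K(x)^h$ and $E=K(x)^h(\vartheta)\subseteq K(\vartheta)^h$; conversely $\vartheta\in E$ and $K(x)^h\subseteq E$ give $K(\vartheta)^h\subseteq E$. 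Therefore $E=K(\vartheta)^h$, as desired.

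The main obstacle I anticipate is verifying that the chosen $\vartheta$ really generates $E$ as a henselization over $K$ alone, not merely over $K(x)^h$: one must check that $K(g(z))$ and $K(\vartheta^p-\vartheta)$ coincide so that Lemma~\ref{zh=fzh} can be fed the correct polynomial, and that the henselization $K(\vartheta)^h$ recaptures $K(x)^h$. The normal‑form bookkeeping from Lemma~\ref{nff} (distinct negative values, vanishing of coefficients with $p\mid i$) is precisely what guarantees the unique‑least‑value hypothesis of Lemma~\ref{zh=fzh}, so the delicate point is to thread these two lemmas together cleanly rather than any single hard computation.
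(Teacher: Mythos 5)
Your proposal is correct and follows essentially the same route as the paper's own proof: establish the transcendental approximation type via Lemma~\ref{transat} (separably tame $\Rightarrow$ separable-algebraically maximal), reduce to the Artin--Schreier form (\ref{ASextf}), normalize via Lemma~\ref{nff} with $K'=K$ (using the separably tame clause), and then feed the normal form into Lemma~\ref{zh=fzh} to get $K(x)^h=K(g(z))^h\subseteq K(\vartheta)^h$, from which $E=K(x)^h(\vartheta)=K(\vartheta)^h$ follows. The ``delicate threading'' you flag at the end is handled exactly as you describe, and no further ideas are needed.
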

\begin{proof}
We can assume that (\ref{ASextf}) holds. Since a separably tame field is separable-algebraically maximal by
\cite[Theorem~3.10]{[K7]}, Lemma~\ref{transat} shows that the approximation type of $x$ over $K$ is
transcendental. Because of Lemma~\ref{nff} we can assume that
that $\vartheta^p-\vartheta=g(z)$, where $g(z)$ is as in (\ref{nfh}). We note that $g(z)\in K[\vartheta]$,
so $K(g(z))^h\subseteq K(\vartheta)^h$. From Lemma~\ref{zh=fzh} we infer that $K(z)^h=K(g(z))^h$, whence
\[
K(x)^h \>=\> K(z)^h \>=\> K(g(z))^h \>\subseteq\> K(\vartheta)^h \>.
\]
Therefore,
\[
E \>=\> K(x)^h(\vartheta)  \>=\> K(x,\vartheta)^h  \>=\> K(\vartheta)^h\>,
\]
as desired.
\end{proof}

\parm
In the mixed characteristic case, we have the following:
\begin{proposition}                     \label{xh=fxhmix}
Take an algebraically closed valued field $(K,v)$ of characteristic 0 and rank 1, an immediate transcendental
extension $(K(x)|K,v)$, and a Galois extension $E$ of $K(x)^h$ of degree $p=\chara Kv>0$. Then there is some
$\eta\in E$ such that $E=K(\eta)^h$.
\end{proposition}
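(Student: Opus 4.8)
The plan is to mirror, step for step, the equal-characteristic argument of Proposition~\ref{xh=fxheq}, replacing the additive Artin--Schreier normal form by the multiplicative Kummer normal form of Lemma~\ref{,nff}. First I would use that $K$ is algebraically closed: then $K$ contains the $p$-th roots of unity and is closed under $p$-th roots, so the reduction carried out before~\eqref{f(x)} applies and I may assume $E=K(x)^h(\eta)$ with $\eta^p=1+f(x)\in K(x)$ a $1$-unit, for some $f(x)\in K[x]$. Since an algebraically closed valued field admits no proper algebraic extension at all, it is in particular separable-algebraically maximal, so Lemma~\ref{transat} guarantees that the approximation type of $x$ over $K$ is transcendental, which is the standing assumption of Section~\ref{sectnf} under which Lemma~\ref{,nff} is available.

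Next I would feed $f(x)$ into Lemma~\ref{,nff}, obtaining a polynomial $g(z)\in(1+f(x))\cdot(K(x)^h)^p$ of the normal form~\eqref{,nfh}, where $z=(x-c)/d$ with $c,d\in K$. Because $g(z)$ and $1+f(x)$ differ by the $p$-th power of some $u\in K(x)^h$, the element $\eta u$ satisfies $(\eta u)^p=g(z)$ and generates the same extension $K(x)^h(\eta u)=K(x)^h(\eta)=E$; after renaming I may therefore assume $\eta^p=g(z)$. Then $g(z)=\eta^p\in K[\eta]$, so that $K(g(z))^h\subseteq K(\eta)^h$.

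The decisive structural feature of the normal form~\eqref{,nfh} is the existence of an index $i_0$ with $p\nmid i_0$ at which $g$ attains its unique coefficient of least value among $a_1,\dots,a_n$; this is exactly the hypothesis of Lemma~\ref{zh=fzh}. Since $z=(x-c)/d$ with $c,d\in K$ we have $K(z)=K(x)$, whence $(K(z)|K,v)$ is immediate, $vz=0$, and (again by Lemma~\ref{transat}) the approximation type of $z$ over $K$ is transcendental. Lemma~\ref{zh=fzh} then yields $K(z)^h=K(g(z))^h$, and chaining the inclusions gives
\[
K(x)^h=K(z)^h=K(g(z))^h\subseteq K(\eta)^h .
\]
As in Proposition~\ref{xh=fxheq}, I would conclude with $E=K(x)^h(\eta)=K(x,\eta)^h=K(\eta)^h$, using Lemma~\ref{FhFEh} for the middle equality. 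The only genuinely mixed-characteristic difficulty sits upstream, in producing the multiplicative normal form with a least-value coefficient at an index prime to $p$ --- the content of Lemma~\ref{,nff}, whose proof trades $p$-th-power monomials for lower-degree ones via part~c) of Lemma~\ref{1+y} and the bookkeeping of the function $\delta$. Granting that lemma, the present proposition is a routine transcription of the equal-characteristic case.
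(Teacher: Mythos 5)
Your proof is correct and follows essentially the same route as the paper's own: reduction to the Kummer form (\ref{f(x)}) using that $K$ is algebraically closed, Lemma~\ref{transat} to get the transcendental approximation type, the normal form of Lemma~\ref{,nff}, and then Lemma~\ref{zh=fzh} to obtain $K(x)^h=K(z)^h=K(g(z))^h\subseteq K(\eta)^h$ and conclude $E=K(x,\eta)^h=K(\eta)^h$. The only cosmetic difference is that the paper takes $\eta^p=1+g(z)$ (matching the proof of Lemma~\ref{,nff}, where it is $1+g(z)$ that lies in $(1+f(x))\cdot(K(x)^h)^p$) rather than your literal reading $\eta^p=g(z)$; either way $g(z)\in K[\eta]$, so the argument is unaffected.
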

\begin{proof}
Since $K$ is algebraically closed, it contains the $p$-th roots of unity and is also closed under $p$-th roots. So
we can assume that (\ref{f(x)}) holds. Since an algebraically closed valued field is obviously
separable-algebraically maximal, Lemma~\ref{transat} again shows that the approximation type of $x$ over $K$ is
transcendental. Because of Lemma~\ref{,nff} we can assume that
that $\eta^p=1+g(z)$, where $g(z)$ is as in (\ref{,nfh}). Again we have that $g(z)\in K[\eta]$,
so $K(g(z))^h\subseteq K(\eta)^h$. From Lemma~\ref{zh=fzh} we infer that $K(z)^h=K(g(z))^h$ and conclude that
$E = K(\eta)^h$ as in the foregoing proof.
\end{proof}

%
%
\section{Proof of Theorem~\ref{stt3'} and Proposition~\ref{N}}                        
In this section, we will build up the proof of Theorem~\ref{stt3'} step by step.
We will at first concentrate on the case of transcendence degree 1. The proof for the case
of higher transcendence degree and the proof of Proposition~\ref{N} will then be given at the end of this section.

%
%
\subsection{Separable-algebraically closed base fields of rank 1}                   
In this subsection, we will prove that every separable immediate
function field of transcendence degree 1 over a separable-algebraically closed
base field of rank 1 is henselian rational. The following result is instrumental in
the reduction to Galois extensions of degree $p$:
\begin{lemma}                               \label{,mcc}
Let $(L,v)$ be a henselian field of characteristic $p > 0$ with divisible value group and algebraically closed
residue field. Then every
nontrivial finite separable extension of $L$ is a tower of Galois extensions of degree $p$.
\end{lemma}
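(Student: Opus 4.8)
The plan is to reduce the assertion to a purely group-theoretic fact about finite $p$-groups, where $p=\chara Lv$, the engine being ramification theory combined with the two hypotheses on $L$. (If $\chara Lv=0$ every algebraic extension is tame, and the first step below will already show $L$ is separable-algebraically closed, so the statement is vacuous; hence I may assume $p=\chara Lv>0$.)

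First I would show that $L$ admits \emph{no} nontrivial finite tame extension. Indeed, suppose $E|L$ is a finite tame extension. Since $vL$ is divisible and value groups are torsion-free, while $vE/vL$ is a finite torsion group, we must have $vE=vL$; and since $Lv$ is algebraically closed and $Ev|Lv$ is algebraic, we have $Ev=Lv$. Then (TE3) forces $[E:L]=(vE:vL)[Ev:Lv]=1$. Consequently the absolute ramification field of $(L,v)$, which by \cite[Theorem (22.7)]{[End]} (see also \cite[Proposition 4.1]{[K--P--R]}) is the maximal tame extension of $(L,v)$, coincides with $L$ itself.

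Second, I would invoke the central fact of ramification theory that the Galois group of $L\sep$ over the absolute ramification field is a pro-$p$ group. Since that ramification field is $L$, the group $\mathrm{Gal}(L\sep|L)$ is pro-$p$; equivalently, every finite Galois extension of $L$ has degree a power of $p$. Now take any nontrivial finite separable extension $E|L$, pass to its Galois closure $N|L$ inside $L\sep$, and set $G=\mathrm{Gal}(N|L)$ (a finite $p$-group) and $H=\mathrm{Gal}(N|E)\subsetneq G$.

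The remaining step is the standard $p$-group fact. As a finite $p$-group is nilpotent, a proper subgroup is strictly contained in its normalizer, so $H\subsetneq N_G(H)$; the nontrivial $p$-group $N_G(H)/H$ has nontrivial center, which contains a subgroup of order $p$, and its preimage $H_1$ satisfies $H\triangleleft H_1$ with $[H_1:H]=p$. Iterating produces a chain $H=H_0\triangleleft H_1\triangleleft\cdots\triangleleft H_m=G$ with each successive index equal to $p$. Via the Galois correspondence the fixed fields form a tower $L=\mathrm{Fix}(H_m)\subset\cdots\subset\mathrm{Fix}(H_0)=E$ in which, by normality of $H_i$ in $H_{i+1}$, each successive extension is Galois of degree $p$, as desired. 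The only nonelementary ingredient — hence the point I expect to be the main obstacle — is the input from ramification theory that over the maximal tame extension the residual Galois group is pro-$p$; I would simply cite it, the rest being bookkeeping with value groups and residue fields together with elementary $p$-group theory.
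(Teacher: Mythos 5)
Your proof is correct, and its endgame coincides in substance with the paper's: both reduce the lemma to the fact that in a finite $p$-group every proper subgroup is the bottom of a chain of successive normal subgroups of index $p$ (the paper cites Huppert for this via the Galois correspondence; you reprove it with the normalizer/center argument). The genuine difference is in how $\mathrm{Gal}(L\sep|L)$ is shown to be pro-$p$. The paper observes that divisibility of $vL$ and algebraic closedness of $Lv$ make $L\sep|L$ an \emph{immediate} extension, and then applies the Lemma of Ostrowski: for every finite subextension $E|L$ the entire degree $[E:L]$ is defect, hence a power of $p$. You instead prove that $L$ admits no proper tame extension (using the two hypotheses in exactly the same way, with (TE3) closing the argument), deduce that $L$ equals its absolute ramification field, and quote the ramification-theoretic fact that the Galois group over the absolute ramification field is pro-$p$. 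Both routes are sound; the paper's is slightly more economical, since Ostrowski's lemma is already in play there and immediacy is a one-line check, whereas you import the structure theory of ramification groups, of which the pro-$p$ statement is one component. What your route buys is the sharper intermediate statement that $L$ has no proper tame extensions at all, i.e., $L$ coincides with its absolute ramification field, which ties the lemma directly to the paper's tameness framework; you also dispose of the case $\chara Lv=0$ explicitly (vacuity), which the paper excludes only through its standing assumption $\chara Kv=p>0$.
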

\begin{proof}
From our conditions on value group and residue field, it follows that the separable-algebraic closure of $L$ is
an immediate extension of $(L,v)$. Hence by the Lemma of Ostrowski the degree of every finite subextension is
a power of $p$. This shows that the separable-algebraic closure of $L$ is a $p$-extension of $L$.
It follows from the general theory of $p$-groups (cf.\ \cite{[H]}, Chapter III, \S 7, Satz 7.2 and the following
remark) via Galois correspondence that every finite subextension of a $p$-extension
is a tower of Galois extensions of degree $p$. This implies the assertion of our lemma.
\end{proof}

With the help of this lemma, the results we proved in Section~\ref{sectGalext} allow us to take the first step
towards our main theorem:
\begin{proposition}         \label{,stt1}
Every immediate separable function field $(F|K,v)$ of transcendence degree 1 over a separable-algebraically
closed field $(K,v)$ of rank 1 is henselian rational.
\end{proposition}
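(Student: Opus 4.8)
The plan is to pass to a single transcendental generator, take henselizations, and then decompose the resulting finite immediate extension $F^h|K(x)^h$ into a tower of Galois extensions of degree $p$, feeding each step into the structure results of the previous subsection. First I would dispose of the case $\chara Kv=0$ by Theorem~\ref{rc0friffhr}, so assume $\chara Kv=p>0$. Since $F|K$ is separable of transcendence degree $1$, I choose a separating element $x\in F$, so that $F|K(x)$ is finite and separable. As $(F|K,v)$ is immediate we have $vK\subseteq vK(x)\subseteq vF=vK$ and $Kv\subseteq K(x)v\subseteq Fv=Kv$, whence $(K(x)|K,v)$ and $(F|K(x),v)$ are immediate; in particular $(K(x)|K,v)$ is an immediate transcendental extension, and by Lemma~\ref{FhFEh} the extension $(F^h|K(x)^h,v)$ is finite, separable and immediate. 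Finally, because $K$ is separable-algebraically closed it admits no nontrivial separable, hence no nontrivial tame, extension; by the description of tame extensions this forces $vK$ to be $q$-divisible for every prime $q\neq p$ and $Kv$ to be separably closed. The value group and residue field of $K(x)^h$ are again $vK$ and $Kv$.

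Next I would reduce to degree $p$. Let $M|K(x)^h$ be any finite separable extension. By the Lemma of Ostrowski, $[M:K(x)^h]$ equals $(vM:vK)\,[Mv:Kv]$ times a power of $p$. Since $vK$ is $q$-divisible for all $q\neq p$, the ramification index $(vM:vK)$ is a power of $p$; since $Kv$ is separably closed, the residue extension $Mv|Kv$ is purely inseparable and $[Mv:Kv]$ is a power of $p$ as well. Hence $[M:K(x)^h]$ is a power of $p$, so the separable-algebraic closure of $K(x)^h$ is a $p$-extension. Exactly as in the proof of Lemma~\ref{,mcc} (passing to the Galois closure of $F^h|K(x)^h$ and invoking the theory of $p$-groups via the Galois correspondence), the extension $F^h|K(x)^h$ is then a tower
\[
K(x)^h=M_0\subset M_1\subset\cdots\subset M_k=F^h
\]
of Galois extensions $M_{i+1}|M_i$ of degree $p$. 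In the mixed characteristic case $K$ is even algebraically closed, so there one may instead invoke Lemma~\ref{,mcc} verbatim.

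I would then induct along this tower. With $x_0=x$ we have $M_0=K(x_0)^h$. Assuming $M_i=K(x_i)^h$, note that $M_i|K(x)^h$ is immediate (being a subextension of the immediate extension $F^h|K(x)^h$) and of transcendence degree $1$ over $K$, so $(K(x_i)|K,v)$ is again an immediate transcendental extension. Since $M_{i+1}|K(x_i)^h$ is Galois of degree $p$, Proposition~\ref{xh=fxheq} (when $\chara K=p$, using that $K$ is separably tame) or Proposition~\ref{xh=fxhmix} (when $\chara K=0$, using that $K$ is algebraically closed) produces some $x_{i+1}\in M_{i+1}$ with $M_{i+1}=K(x_{i+1})^h$. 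After $k$ steps this yields $F^h=K(t)^h$ for the transcendental element $t:=x_k\in F^h$. Because the definition of henselian rationality demands a transcendence basis inside $F$, I would finally invoke \cite[Theorem 11.1]{[K--V]} to arrange $t\in F$; then $F\subseteq F^h=K(t)^h$ exhibits $(F|K,v)$ as henselian rational.

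The main obstacle is the reduction to degree $p$ in the equal characteristic case: here $K(x)^h$ need not satisfy the divisibility and algebraic-closedness hypotheses of Lemma~\ref{,mcc} (a separable-algebraically closed field of positive characteristic is generally neither perfect nor of divisible value group), so one must argue the $p$-extension property directly from the Lemma of Ostrowski as above. A secondary point requiring care is verifying, at every stage of the induction, that $(K(x_i)|K,v)$ remains immediate and transcendental, so that the degree-$p$ propositions continue to apply.
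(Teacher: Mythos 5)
Your proof is correct and is essentially the paper's own proof: choose a separating element $x$, decompose $F^h=F.K(x)^h$ over $K(x)^h$ into a tower of Galois extensions of degree $p$, climb the tower by induction using Proposition~\ref{xh=fxheq} (equal characteristic, $K$ being separably tame) or Proposition~\ref{xh=fxhmix} (mixed characteristic, $K$ being algebraically closed), and finish with \cite[Theorem 11.1]{[K--V]} to replace the generator of $F^h$ by an element of $F$. The only genuine divergence is the step you single out as the ``main obstacle,'' and there your diagnosis is mistaken even though your cure is sound. A separable-algebraically closed field of positive characteristic need not be perfect, but once its valuation is nontrivial (which holds here, since otherwise immediacy would force $F=K$), its value group \emph{is} divisible and its residue field \emph{is} algebraically closed: $p$-divisibility of $vK$ follows from Artin--Schreier roots, which exist in $K$ because $X^p-X-a$ is separable, and perfection of $Kv$ follows from roots of polynomials $X^p-cX-a$ with $vc>0$. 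This is precisely the fact the paper cites as \cite[Lemma 2.16]{KTrans}, and it allows Lemma~\ref{,mcc} to be applied verbatim to $K(x)^h$. Your substitute argument --- deducing prime-to-$p$ divisibility of $vK$ and separable closedness of $Kv$ from the nonexistence of proper tame extensions, and then obtaining the $p$-power degree bound from the Lemma of Ostrowski before running the $p$-group argument --- is a valid reproof of Lemma~\ref{,mcc} under weaker hypotheses, so nothing in your argument breaks; it is simply an unnecessary detour. The remaining bookkeeping (that each $(K(x_i)|K,v)$ stays immediate and transcendental along the tower, and the final appeal to \cite[Theorem 11.1]{[K--V]}) matches the paper's proof.
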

\begin{proof}
The valuation $v$ is nontrivial on $K$ since otherwise $F=K$ because $(F|K,v)$ is immediate. As
$K$ is separable-algebraically closed, it follows that $vK$ is divisible and $Kv$ is algebraically closed
(cf.\ \cite[Lemma 2.16]{KTrans}).

We choose a separating element $x$ of $F|K$. Since the subextension $(K(x)|K,v)$ of $(F|K,v)$ is immediate, we have
that $vK(x)=vK$ is divisible and $K(x)v=Kv$ is algebraically closed.
If $F\subseteq K(x)^h$ does not already hold, then by Lemma~\ref{,mcc}
the nontrivial finite separable extension $F.K(x)^h|K(x)^h$  is a tower of Galois extensions of degree $p$. By
induction on the number of Galois extensions in the tower, using Proposition~\ref{xh=fxheq} or
Proposition~\ref{xh=fxhmix} respectively, we find $y\in F^h$ such that $F.K(x)^h=K(y)^h$ and therefore,
$F\subseteq K(y)^h$. (Proposition~\ref{xh=fxheq} can be applied because every
separable-algebraically closed valued field is separably tame, and Proposition~\ref{xh=fxhmix} can be applied
because every separable-algebraically closed field of characteristic 0 is algebraically closed.)

Suppose that $y$ lies in the completion $K^c$ of $(K,v)$. Since $K$ is separable-algebraically closed, $(K,v)$ is
henselian, and so is its completion. Thus $K(y)$ and hence also $F$ can be assumed embedded in $K^c$. Then it
follows from Theorem~\ref{Xicr} that $F\subset K(x)^h$.

Suppose now that $y\notin K^c$. Then \cite[Theorem 11.1]{[K--V]} shows that $y$ can already be chosen in $F$,
which proves that $(F|K,v)$ is henselian rational.
\end{proof}

%
%
\subsection{Separably tame base fields of rank 1}                               \label{sectred2}
We will now generalize Proposition~\ref{,stt1} to the case of separably tame base fields of rank 1.
Theorem~\ref{,pdp} is the same as Theorem 14.5 of \cite{[K--V]}, except that the latter assumes that $(K,v)$ is
algebraically maximal. However, all that is needed for the proof of that theorem is that if $x$ is transcendental
over $K$, then the approximation type of $x$
over $(K,v)$ is transcendental. If $(K,v)$ is separable-algebraically maximal,
then this follows from Lemma~\ref{transat}. Thus Theorem~\ref{,pdp} is proven.
\begin{proposition}                     \label{,hr1}
Every immediate separable function field $(F|K,v)$ of transcendence degree 1 over a separably tame
field $(K,v)$ of rank 1 is henselian rational.
\end{proposition}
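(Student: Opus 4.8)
The plan is to reduce the separably tame case to the separable-algebraically closed case already settled in Proposition~\ref{,stt1}, and then to descend back to $K$ by means of Theorem~\ref{,pdp}. Set $L=K\sep$, the separable-algebraic closure of $K$. Since $(K,v)$ is separably tame, its absolute ramification field is separable-algebraically closed and coincides with the maximal tame extension; hence $(L|K,v)$ is a tame extension and $L$ is a separable-algebraically closed field, which by \cite[Lemma 2.16]{KTrans} has divisible value group $vL$ and algebraically closed residue field $Lv$. As an algebraic extension of the rank $1$ field $(K,v)$, the field $(L,v)$ again has rank $1$.

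First I would dispose of the case $F\subset K^c$: here $(F|K,v)$ is a separable subextension of the completion, so it is henselian rational by Theorem~\ref{Xicr} (recall that a separably tame field is henselian). Assume therefore that $F\not\subset K^c$. The central claim is that $(F.L|L,v)$ is an immediate separable function field of transcendence degree $1$ over $L$. Granting this, Proposition~\ref{,stt1} shows that $(F.L|L,v)$ is henselian rational, and since $(L|K,v)$ is tame and $(K,v)$ is separable-algebraically maximal of rank $1$, Theorem~\ref{,pdp} then yields that $(F|K,v)$ is henselian rational, as desired.

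It remains to verify the central claim. Transcendence degree $1$ and separability are clear: any separating element $x$ of $F|K$ remains a separating transcendence basis of $F.L|L$, since $F|K(x)$ is finite separable and $L(x)|K(x)$ is separable, so $F.L=F.L(x)$ is separable-algebraic over $L(x)$. The point requiring work is immediacy. I would first show that $(K_1(x)|K_1,v)$ is immediate for every finite subextension $K_1|K$ inside $L$: such $K_1|K$ is tame, hence defectless, and combining the fundamental inequality for $K_1(x)|K(x)$ with $vK(x)=vK$, $K(x)v=Kv$ and $[K_1(x):K(x)]=[K_1:K]=(vK_1:vK)[K_1v:Kv]$ forces all the intermediate inequalities to be equalities, so that $vK_1(x)=vK_1$ and $K_1(x)v=K_1v$. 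Passing to the union over all such $K_1$ gives that $(L(x)|L,v)$ is immediate, whence $vL(x)=vL$ is divisible and $L(x)v=Lv$ is algebraically closed. Consequently every finite extension of $L(x)$ is automatically immediate over $L(x)$; in particular $F.L=F.L(x)$ is immediate over $L(x)$, and together with the immediacy of $(L(x)|L,v)$ this proves that $(F.L|L,v)$ is immediate.

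The main obstacle is precisely this transfer of immediacy from $K$ to the algebraic extension $L$: one must guarantee that adjoining $L$ neither creates new values nor new residues over $x$. The argument above turns on the defectlessness of tame extensions together with the fundamental inequality, and then exploits that over a base with divisible value group and algebraically closed residue field all finite extensions are immediate. The case distinction via $K^c$ is needed only because Theorem~\ref{,pdp} assumes $F\not\subset K^c$, and that case is absorbed directly by Theorem~\ref{Xicr}.
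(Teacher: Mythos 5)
Your proof is correct and follows essentially the same route as the paper's own: dispose of the case $F\subset K^c$ via Theorem~\ref{Xicr}, apply Proposition~\ref{,stt1} to $(F.K\sep|K\sep,v)$, and descend to $K$ by Theorem~\ref{,pdp}, using that $(K\sep|K,v)$ is tame and that a separably tame field is separable-algebraically maximal. The only difference is that you also verify explicitly that $(F.K\sep|K\sep,v)$ is an immediate separable function field (via defectlessness of the tame subextensions $K_1|K$ and the fundamental inequality), a point the paper's proof leaves implicit; this verification is sound.
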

\begin{proof}
If $F$ lies in the completion of $K$, our assertion follows from Theorem~\ref{Xicr}; so let us assume now that $F$
is not contained in the completion of $K$. We know that $F.K\sep|K\sep$ is henselian
rational by virtue of Proposition~\ref{,stt1}. As $(K,v)$ is a separably tame field, the
extension $(K\sep|K,v)$ is tame by definition. Hence our assertion follows from Theorem~\ref{,pdp}.
\end{proof}

%
%
\subsection{Separably tame base fields of finite rank}                              
The next step towards the desired structure theorem is the generalization of Proposition~\ref{,hr1}
to the case of finite rank. We let $P$ be the place that is associated with the valuation $v$ on $F$.
We need some preparations.
\begin{lemma}                               \label{xysep}
Let $F|K$ be a separable function field of transcendence degree 1 and $Q$
a nontrivial place on $F$. Then for every $x\in F^\times$ there exists a
separating element $y$ of $F|K$ which satisfies $v_Q(y)=v_Q(x)$ and if $v_Q(x)=0$, also $yQ=xQ$.
\end{lemma}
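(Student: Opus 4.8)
The plan is to combine the differential criterion for separating elements with a simple perturbation. Since $L|K$ is separable of transcendence degree $1$, it is separably generated, so the module of K\"ahler differentials $\Omega_{L/K}$ is a one-dimensional $L$-vector space, and I will use the standard fact that $y\in L$ is a separating element of $L|K$ if and only if $dy\neq 0$ in $\Omega_{L/K}$. Indeed, $dy\neq 0$ forces $y$ to be transcendental over $K$ (every element of $L$ algebraic over $K$ is separable over $K$, since $L|K$ is separable, and hence has vanishing differential), and then $dy$ spans $\Omega_{L/K}$, so $L|K(y)$ is separable algebraic. I may assume $x\neq 0$, since otherwise $v_Q(x)=\infty$ and no separating $y$ with $v_Q(y)=v_Q(x)$ could exist.

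First I would dispose of the easy case: if $dx\neq 0$, then $x$ is itself a separating element and I simply take $y=x$. The real work is the case $dx=0$, where $x$ fails to be separating (for instance when $x\in K$, or when $\chara K=p>0$ and $x\in L^pK$). In that case I would seek $y$ of the form $y=x+s$, with $s$ a separating element satisfying $v_Q(s)>v_Q(x)=:\gamma$ (note $\gamma<\infty$). Then $dy=dx+ds=ds\neq 0$, so $y$ is separating; by the ultrametric law $v_Q(y)=\gamma$, and if $\gamma=0$ then $s\in{\mathcal M}_Q$ gives $yQ=xQ$. Thus everything reduces to producing a separating element of arbitrarily large $Q$-value.

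To do this I would fix any separating element $t$ of $L|K$ (which exists since $L|K$ is separably generated) and any $\pi\in L$ with $0<v_Q(\pi)<\infty$ (which exists since $Q$ is nontrivial), and set $s=t\pi^N$. Then $v_Q(s)=v_Q(t)+N\,v_Q(\pi)\to\infty$, so $v_Q(s)>\gamma$ for $N$ large. It remains to guarantee that $s$ stays separating for suitable such $N$, and this is the step I expect to require the most care. Since $dt$ is a basis of $\Omega_{L/K}$, I write $d\pi=c\,dt$ with $c\in L$; a direct computation gives $ds=\pi^{N-1}(\pi+Ntc)\,dt$, which vanishes only if $\pi+Ntc=0$.

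The final point is to see that $\pi+Ntc\neq 0$ for arbitrarily large $N$. If $c=0$ this never fails. If $c\neq 0$, then (using $t\neq 0$) the equation $\pi+Ntc=0$ forces $N\cdot(tc)=-\pi$ in $L$: when $\chara K=0$ this can hold for at most one integer $N$, and when $\chara K=p>0$ the value of $N\cdot(tc)$ depends only on $N$ modulo $p$, so the equation can hold for at most one residue class modulo $p$. In either case I can choose $N$ arbitrarily large with $\pi+Ntc\neq 0$, hence with $ds\neq 0$. This produces the required separating element $s$ of large $Q$-value, and $y=x+s$ then satisfies all the assertions of the lemma.
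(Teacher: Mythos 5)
Your proof is correct, and it reaches the conclusion by a genuinely different route than the paper. Both arguments share the same basic strategy of perturbing $x$ by elements of higher value (so that value and residue are preserved automatically by the ultrametric inequality), and both rest on the same implicit hypotheses, namely that $x\neq 0$ and that $L|K$ admits a separating element at all (automatic in the paper's applications, where $L|K$ is a finitely generated separable, hence separably generated, extension; you at least make the first assumption explicit). The difference lies in how separability of the perturbed element is secured. The paper fixes a separating element $z$ and an element $a$ with $v_Q(a)>v_Q(x)$ and $v_Q(az)>v_Q(x)$, notes that $x$, $x+az$, $x+a$ generate $K(x,z,a)$ over $K$ and all share value and residue with $x$, and then cites Lang (\cite[VIII, \S 4, Proposition 4.8]{[L]}): a separating transcendence basis of a finitely generated separable extension can be selected from any given set of generators. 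Hence one of the three candidates is separating for $K(x,z,a)|K$, and therefore for $L|K$, since $L|K(x,z,a)$ is separable algebraic because $z$ is separating. Your argument replaces this citation by the differential criterion ($y$ is separating if and only if $dy\neq 0$ in the one-dimensional space $\Omega_{L/K}$), the explicit computation $d(t\pi^N)=\pi^{N-1}(\pi+Ntc)\,dt$, and the counting argument (at most one value of $N$ in characteristic $0$, at most one residue class of $N$ modulo $p$ in characteristic $p$) ensuring that some large $N$ works. What the paper's route buys is brevity and uniformity: no case distinction on whether $dx$ vanishes, no characteristic-dependent choice of exponent, and no machinery beyond the quoted proposition. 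What your route buys is self-containedness and transparency: it exhibits exactly which perturbations can fail, and along the way it proves the slightly stronger and independently useful fact that $L|K$ has separating elements of arbitrarily large $Q$-value.
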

\begin{proof}
Let us choose any separating element $z$ of $F|K$ and an element
$a\in F^\times$ which satisfies $v_Q(a)>v_Q(x)$ and $v_Q(az)>v_Q(x)$.
Note that $F|K(x,z,a)$ and $K(x,z,a)|K$ are separable extensions. We have:
\[
K(x,z,a) \>=\> K(x,x+az,x+a)
\]
with
\[
v_Qx\>=\>v_Q(x+az)\>=\>v_Q(x+a) \;\mbox{ and }\; xQ\>=\>(x+az)Q\>=\>(x+a)Q\;.
\]
On the other hand, at least one of the elements $x$, $x+az$, $x+a$ must be a separating element for
the separable extension $K(x,x+az,x+a)|K$ (cf.\ \cite[VIII, \$4, Proposition 4.8]{[L]});
we take $y$ to be such an element. Then $y$ is also a separating element
for $F|K$, and it satisfies our assertion.
\end{proof}

The following lemma will provide some useful information for the case that $P$ is the composition of two
nontrivial places $Q_1$ and $Q_2$, where $Q_2$ is a place on the residue field $FQ_1\,$; we write $P=Q_1Q_2\,$.
The henselization of $(F,Q_1)$ will be denoted by $(F^{h(Q_1)},Q_1)$; similarly, $((FQ_1)^{h(Q_2)},Q_2)$
indicates the henselization of $(FQ_1,Q_2)$. Note that the extension of $Q_1$ from $F^{h(Q_1)}$ to $F^{h(P)}$
is unique; we denote it again by $Q_1\,$.
\begin{lemma}                  \label{dec}
Take any valued field $(L,P)$ with $P=Q_1Q_2\,$.
\sn
a) Take any field extension $L'|L$ and extend $P,Q_1,Q_2$ to $L'$ such that $P=Q_1Q_2$ also holds on $L'$.
Then $(L'|L,P)$ is immediate if and only if $v_{Q_1} L' =v_{Q_1} L$ and the extension
$(L'Q_1|LQ_1,Q_2)$ is immediate.
\sn
b) The valued field $(L,P)$ is henselian if and only if $(L,Q_1)$ and $(LQ_1,Q_2)$ are.
\sn
c) The extension $(L^{h(P)}|L^{h(Q_1)},Q_1)$ is tame, and $(L^{h(P)})Q_1=(LQ_1)^{h(Q_2)}$.
\end{lemma}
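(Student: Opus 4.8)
The plan is to prove the three assertions in the order (a), (b), (c), using (a) and (b) as bookkeeping inside (c). Throughout I would rely on the standard description of the composite valuation $P=Q_1Q_2$: the value group $v_PL$ carries a convex subgroup canonically isomorphic to $v_{Q_2}(LQ_1)$ with quotient isomorphic to $v_{Q_1}L$, and $LP=(LQ_1)Q_2$; for an extension $L'|L$ these data are compatible, the convex subgroup of $v_PL$ being the trace on $v_PL$ of the convex subgroup of $v_PL'$. For (a) I would simply read immediacy off this description. Writing the condition that $(L'|L,P)$ is immediate as $v_PL'=v_PL$ together with $L'P=LP$, compatibility of the convex subgroups splits the first equality into $v_{Q_1}L'=v_{Q_1}L$ (on the quotients) and $v_{Q_2}(L'Q_1)=v_{Q_2}(LQ_1)$ (on the convex subgroups), while $L'P=LP$ is literally $(L'Q_1)Q_2=(LQ_1)Q_2$. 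The last two conditions together are exactly immediacy of $(L'Q_1|LQ_1,Q_2)$, giving (a).

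Assertion (b) is the classical criterion for henselianity of a composite valuation, which I would invoke directly (see, e.g., the ramification-theoretic treatment in \cite{[End]}): a field is $P$-henselian if and only if it is $Q_1$-henselian and its $Q_1$-residue field is $Q_2$-henselian. Only this equivalence, applied both to $L^{h(P)}$ and to the auxiliary field constructed below, is needed for (c).

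For (c) I would first settle the easy inclusions and the value group. By (b), $L^{h(P)}$ is $Q_1$-henselian and contains $L$, so $L^{h(Q_1)}\subseteq L^{h(P)}$, and this extension is separable-algebraic. Since $L^{h(P)}|L$ is $P$-immediate, part (a) gives $v_{Q_1}L^{h(P)}=v_{Q_1}L=v_{Q_1}L^{h(Q_1)}$, so the $Q_1$-value group does not grow. The crux is to identify the $Q_1$-residue field. Over the $Q_1$-henselian field $L^{h(Q_1)}$ I would form the \emph{unramified (inertial) lift} $R'$ of the separable-algebraic extension $(LQ_1)^{h(Q_2)}|LQ_1$: by inertia theory every finite separable residue extension lifts uniquely to a finite defectless extension with trivial ramification, and the union over the finite subextensions of $(LQ_1)^{h(Q_2)}$ yields $R'|L^{h(Q_1)}$ which is tame (indeed unramified), with $v_{Q_1}R'=v_{Q_1}L^{h(Q_1)}$ and $R'Q_1=(LQ_1)^{h(Q_2)}$. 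Being algebraic over a $Q_1$-henselian field, $R'$ is $Q_1$-henselian, and its $Q_1$-residue field $(LQ_1)^{h(Q_2)}$ is $Q_2$-henselian, so by (b) $R'$ is $P$-henselian; as it contains $L$, it contains $L^{h(P)}$. Now $(L^{h(P)})Q_1$ is $Q_2$-henselian by (b) and contains $LQ_1$, hence contains $(LQ_1)^{h(Q_2)}$; but it also lies inside $R'Q_1=(LQ_1)^{h(Q_2)}$, forcing $(L^{h(P)})Q_1=(LQ_1)^{h(Q_2)}$, which is the second assertion. Finally, from $L^{h(Q_1)}\subseteq L^{h(P)}\subseteq R'$ and the tameness of $R'|L^{h(Q_1)}$, every finite subextension of $L^{h(P)}|L^{h(Q_1)}$ is a finite subextension of the tame extension $R'|L^{h(Q_1)}$ and hence tame; so $(L^{h(P)}|L^{h(Q_1)},Q_1)$ is tame, the first assertion.

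The main obstacle is precisely the upper bound $(L^{h(P)})Q_1\subseteq(LQ_1)^{h(Q_2)}$. The lower bound and $Q_2$-henselianity are cheap, but a $Q_2$-immediate $Q_2$-henselian extension of $LQ_1$ need not coincide with its henselization when $LQ_1$ fails to be algebraically maximal, so one cannot simply argue by minimality on the residue level. The inertial lift $R'$ is what supplies the needed ceiling $L^{h(P)}\subseteq R'$, and it is here that $Q_1$-henselianity of the base and the bijective correspondence between unramified extensions and separable residue extensions do the essential work; verifying that $R'$ is genuinely $P$-henselian (via (b)) rather than merely $Q_1$-henselian is the step that must be handled with care.
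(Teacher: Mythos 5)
Your proof is correct and follows essentially the same route as the paper: your inertial lift $R'$ is exactly the field $E$ that the paper constructs inside the absolute inertia field of $(L^{h(Q_1)},Q_1)$ via the lifting correspondence for unramified extensions (Engler--Prestel, Theorem 5.2.7), and both arguments invoke part b) twice --- once to show the lift is $P$-henselian and hence contains $L^{h(P)}$, and once to obtain $(LQ_1)^{h(Q_2)}\subseteq (L^{h(P)})Q_1$. The only (harmless) deviation is that the paper goes on to establish the exact equality $E=L^{h(P)}$ by a second appeal to the correspondence, whereas you stop at the sandwich $L^{h(Q_1)}\subseteq L^{h(P)}\subseteq R'$ and deduce tameness of the subextension directly, which suffices.
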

\begin{proof}
The straightforward proof of part a) is left to the reader. Part b) is \cite[Theorem~32.15]{[W]} (where it is
stated using valuations instead of places). For the proof of part b) one uses the fact that
a valued field is henselian if and only if the extension of its valuation to the algebraic closure is unique.

We prove part c).
There exists a tame algebraic extension $(E,Q_1)$
of $(L^{h(Q_1)},Q_1)$ such that $EQ_1=(LQ_1)^{h(Q_2)}$; this is found as follows.
The \bfind{absolute inertia field} of $(L^{h(Q_1)},Q_1)$, which we denote by $(L^{i(Q_1)},Q_1)$, is the inertia
field of the normal extension $(L\sep|L^{h(Q_1)},Q_1)$. It is a subfield of the absolute ramification field
of $(L^{h(Q_1)},Q_1)$ and is therefore a tame extension of $(L^{h(Q_1)},Q_1)$. The henselization $(LQ_1)^{h(Q_2)}$
is a separable-algebraic extension of $L^{h(Q_1)}=LQ_1\,$. Hence by \cite[part (2) of Theorem 5.2.7]{[Eng--P]}
there is a subextension $(E|L^{h(Q_1)},Q_1)$ of $(L^{i(Q_1)}|L^{h(Q_1)},Q_1)$ such that $EQ_1=(LQ_1)^{h(Q_2)}$.
As a subextension of a tame extension, this extension is tame as well. Once we have shown that
$E=L^{h(P)}$, part c) of our lemma is proven.

As an extension of the henselian field $(L^{h(Q_1)},Q_1)$, also $(E,Q_1)$ is henselian. Further, $(EQ_1,Q_2)=
((LQ_1)^{h(Q_2)},Q_2)$ is henselian. Hence by part b), $(E,P)$ is henselian and must therefore contain
$L^{h(P)}$, so $(L^{h(P)}|L^{h(Q_1)},Q_1)$ is a subextension of the absolute inertia field. We observe that
$(L^{h(P)}Q_1,Q_2)$ is henselian by part b), so it must contain the henselization $(LQ_1)^{h(Q_2)}=EQ_1\,$.
Again from \cite[part (2) of Theorem 5.2.7]{[Eng--P]} we obtain that $E\subseteq L^{h(P)}$, so equality holds.
\end{proof}

\begin{proposition}               \label{stt2}
Every immediate separable function field $(F|K,v)$ of transcendence degree 1 over a separably tame
field $(K,v)$ of finite rank is henselian rational.
\end{proposition}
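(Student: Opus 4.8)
The plan is to induct on the rank $n$ of $(K,v)$, where the base case $n=1$ is exactly Proposition~\ref{,hr1}. For the inductive step I would write $P=Q_1Q_2$, with $Q_1$ the unique rank~$1$ coarsening of $P$ and $Q_2$ the induced place of rank $n-1$ on $KQ_1$. Since $(K,P)$ is separably tame, so are $(K,Q_1)$ and $(KQ_1,Q_2)$ by the composition property of separably tame fields, and both are henselian by Lemma~\ref{dec}(b). By Lemma~\ref{dec}(a) the immediacy of $(F|K,P)$ gives $v_{Q_1}F=v_{Q_1}K$ and makes $(FQ_1|KQ_1,Q_2)$ immediate; since the $Q_1$-value group does not grow, Abhyankar's inequality forces $\trdeg(FQ_1|KQ_1)\in\{0,1\}$, and this dichotomy drives the whole argument.

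\emph{First case: $\trdeg(FQ_1|KQ_1)=0$.} Then $(FQ_1|KQ_1,Q_2)$ is an immediate, separable, algebraic extension of the separable-algebraically maximal field $(KQ_1,Q_2)$, hence trivial, so $FQ_1=KQ_1$. Together with $v_{Q_1}F=v_{Q_1}K$ this says that $(F|K,Q_1)$ is an \emph{immediate} separable function field of transcendence degree~$1$ over the rank~$1$ separably tame field $(K,Q_1)$. Proposition~\ref{,hr1} then yields $x\in F$ with $F\subseteq K(x)^{h(Q_1)}$. As $K(x)^{h(P)}$ is $P$-henselian it is $Q_1$-henselian by Lemma~\ref{dec}(b), so it contains $K(x)^{h(Q_1)}$ and hence $F$; thus $(F|K,P)$ is henselian rational.

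\emph{Second case: $\trdeg(FQ_1|KQ_1)=1$.} Now $(FQ_1|KQ_1,Q_2)$ is an immediate separable function field of transcendence degree~$1$ over the separably tame field $(KQ_1,Q_2)$ of rank $n-1$, so the induction hypothesis provides $\bar x\in FQ_1$ with $FQ_1\subseteq KQ_1(\bar x)^{h(Q_2)}$. Starting from a separating element of $F|K$ whose $Q_1$-residue is $\bar x$ and applying Lemma~\ref{xysep} with $Q=Q_1$, I obtain a separating element $y$ of $F|K$ with $v_{Q_1}y=0$ and $yQ_1=\bar x$; since $\bar x$ is transcendental over $KQ_1$ this forces $K(y)Q_1=KQ_1(\bar x)$ and $v_{Q_1}K(y)=v_{Q_1}K$. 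Applying Lemma~\ref{dec}(c) to $L=F$ and to $L=K(y)$ gives $(F^{h(P)})Q_1=(FQ_1)^{h(Q_2)}$ and $(K(y)^{h(P)})Q_1=(KQ_1(\bar x))^{h(Q_2)}$, and these coincide by the induction hypothesis. As the $Q_1$-value groups agree as well, the finite extension $F^{h(P)}|K(y)^{h(P)}$ is immediate for $P$ (equivalently for $Q_1$) and separable, the latter by Lemma~\ref{FhFEh}.

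The crux, and the step I expect to be the main obstacle, is to conclude that this immediate extension is \emph{trivial}, i.e.\ that $F\subseteq K(y)^{h(P)}$. By Ostrowski's Lemma a finite immediate separable extension is trivial precisely when it has no defect, so what is needed is that $K(y)^{h(P)}$ be separable-algebraically maximal, equivalently separably defectless. This is exactly a stability statement: the separable function field $K(y)$ over the separably tame field $(K,v)$ remains separably defectless after henselization. I would secure it by invoking the stability of separably tame (indeed tame) fields, which is the companion input to these structure theorems; the separability of the residue extensions needed to feed the induction hypothesis and Lemma~\ref{xysep} is arranged by working throughout with separating elements. Granting defectlessness, $F^{h(P)}=K(y)^{h(P)}$, which completes the second case and the induction, proving Proposition~\ref{stt2}.
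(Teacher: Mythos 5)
Your architecture is genuinely different from the paper's: you induct on the rank, peeling off the rank-one coarsening $Q_1$ and splitting according to whether $\trdeg (FQ_1|KQ_1)$ is $0$ or $1$, while the paper avoids induction by decomposing $P=P_1P_2P_3$ in one stroke ($P_1$ the coarsening at which the residue field extension still has transcendence degree $1$, $P_2$ the rank-one piece across which it drops to $0$), applying Proposition~\ref{,hr1} once to $(FP_1|KP_1,P_2)$ and lifting once. Your Case 1 is correct, and the skeleton of your Case 2 (induction hypothesis downstairs, lifting $\bar x$ to a separating element $y$ via Lemma~\ref{xysep}, checking that $F^{h(P)}|K(y)^{h(P)}$ is finite, separable and immediate) parallels the paper's lifting step. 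Two minor loose ends: you must show $FQ_1|KQ_1$ is finitely generated (the paper gets this from \cite[Corollary~2.3]{[K7]}), and its separability comes from perfectness of the tame field $(KQ_1,Q_2)$, not from your choice of separating elements.

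However, at the step you yourself flag as the crux there is a genuine gap. You want $K(y)^{h(P)}$ to be separably defectless and propose to get this from ``the stability of separably tame fields'' applied to the function field $K(y)$ over $(K,v)$, i.e., with respect to the full place $P$. The Generalized Stability Theorem (\cite[Theorem~1]{[K8]}) has an indispensable hypothesis: the valued function field must have no transcendence defect (Abhyankar equality). With respect to $P$ the extension $(K(y)|K,P)$ is immediate (as $K(y)\subseteq F$ and $(F|K,P)$ is immediate), so this hypothesis fails as badly as possible and the theorem does not apply. As a sanity check: were your unrestricted version true, then for an arbitrary separating element $x$ of $F|K$ the finite separable immediate extension $F^{h}|K(x)^{h}$ would always be trivial, so Theorem~\ref{stt3'} would hold for every choice of generator and all of Section~\ref{sectGalext} would be superfluous; this is false precisely because of defect. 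The repair is available inside your own setup: apply stability with respect to $Q_1$, for which $(K(y)|K,Q_1)$ has no transcendence defect since $yQ_1=\bar x$ is transcendental over $KQ_1$. This yields that $(K(y),Q_1)$ is separably defectless; then $(K(y)^{h(Q_1)},Q_1)$ is separably defectless by \cite[Theorem~(18.2)]{[End]}, the extension $(K(y)^{h(P)}|K(y)^{h(Q_1)},Q_1)$ is tame by part c) of Lemma~\ref{dec}, hence $(K(y)^{h(P)},Q_1)$ is separably defectless by \cite[Proposition~2.12]{[K7]}. Since $F^{h(P)}|K(y)^{h(P)}$ is finite, separable and $Q_1$-immediate, and $(K(y)^{h(P)},Q_1)$ is $Q_1$-henselian by part b) of Lemma~\ref{dec}, the extension is trivial. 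This is exactly the paper's concluding argument with $Q_1$ in place of $P_1$; with it, your induction goes through.
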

\begin{proof}
Let $P$ denote the place associated with $v$.
Since $F$ has finite rank and $F|K$ is an immediate extension of transcendence
degree 1, there exist places $P_1$, $P_2$, $P_3$ where $P_1$ and $P_3$
may be trivial and $P_2$ has rank 1, such that $P = P_1P_2P_3$ and
\begin{eqnarray*}
&& \trdeg (FP_1|KP_1) = 1\;,\\
&& \trdeg (FP_1P_2|KP_1P_2) = 0\;.
\end{eqnarray*}
By \cite[Lemma~3.14]{[K7]}, the hypothesis that $(K,P)$ is a separably tame field yields
\sn
1) $(K,P_1)$ and $(KP_1,P_2)$ are separably tame fields,
\n
2) if $P_1$ is nontrivial, then $(KP_1,P_2)$ is a tame field,
\n
3) since $P_2$ is nontrivial, the same holds for $P_1P_2$ and $(KP_1P_2,P_3)$ is a tame field.
\pars
Since $(F|K,P)$ is immediate, it follows from Lemma~\ref{dec} that $v_{P_1} F =v_{P_1}K$,
$v_{P_2}(FP_1)=v_{P_2}(KP_1)$, and that the algebraic extension
$(FP_1P_2|KP_1P_2,P_3)$ is immediate. Since the tame field $(KP_1P_2,P_3)$ is defectless, the latter
extension must be trivial. This yields that also
\begin{equation}                      \label{FP1KP1}
(FP_1|KP_1,P_2)
\end{equation}
is an immediate extension. Since $\trdeg F|K=1=\trdeg FP_1|KP_2\,$, \cite[Corollary~2.3]{[K7]} can be applied to
$(F|K,P_1)$ to deduce that $FP_1|KP_1$ is finitely generated. We have shown that (\ref{FP1KP1}) is an immediate
function field of transcendence degree 1 and rank 1.

Now we distinguish two cases. If $P_1$ is nontrivial, then $(KP_1,P_2)$ is a tame field and hence perfect. It
follows that $FP_1|KP_1$ is separable. If $P_1$ is trivial, then since $F|K$ is separable by assumption,
also $FP_1|KP_1$ is separable. In both cases, we can now apply Proposition~\ref{,hr1} to obtain that
(\ref{FP1KP1}) is henselian rational. So we may write
\[
FP_1^{h(P_2)} \>=\> KP_1(xP_1)^{h(P_2)}
\]
for a suitable
$x\in F$ which is consequently transcendental over $K$ with $xP_1$ transcendental over $KP_1\,$. Applying
Lemma~\ref{xysep} with $Q=P_1$ we see that $x$ can be chosen to be a separating element for $F|K$, so that
$F|K(x)$ is a finite separable extension. Our goal is to show that $F^{h(P)}=K(x)^{h(P)}$.

\pars
We know that $F^{h(P_1P_2)}P_1P_2=FP_1P_2=KP_1P_2$ and that $K(x)^{h(P_1P_2)}P_1P_2=$ $K(x)P_1P_2=KP_1P_2$ (the
latter holds since $KP_1P_2\subseteq K(x)P_1P_2\subseteq FP_1P_2 = KP_1P_2$). Since $(KP_1P_2,P_3)$ is a tame field,
it is henselian. By part b) of Lemma~\ref{dec} it follows that $(F^{h(P_1P_2)},P)$ is henselian, so it must contain
$F^{h(P)}$. It also follows that $(F^{h(P)}, P_1P_2)$ is henselian, so it must contain $F^{h(P_1P_2)}$. Hence the
two fields are equal. In the same way one shows that $K(x)^{h(P)}=K(x)^{h(P_1P_2)}$. Using this together with
part c) of Lemma~\ref{dec} we obtain that $F^{h(P)}P_1=F^{h(P_1P_2)}P_1=FP_1^{h(P_2)}$ and
$K(x)^{h(P)}P_1=K(x)^{h(P_1P_2)}P_1=K(x)P_1^{h(P_2)}\supseteq KP_1(xP_1)^{h(P_2)}$. Altogether,
\begin{equation}                   \label{samerf}
K(x)^{h(P)}P_1 \>\subseteq\> F^{h(P)}P_1 \>=\> FP_1^{h(P_2)} \>=\> KP_1(xP_1)^{h(P_2)}
\>\subseteq\> K(x)^{h(P)}P_1\>,
\end{equation}
hence equality holds everywhere. If $P_1$ is trivial, then this implies that $F^{h(P)}=K(x)^{h(P)}$ and we are
done. We wish to show the same in case that $P_1$ is nontrivial.

\pars
Since the extensions $(F|K,P)$ and $(F^{h(P)}|F,P)$ are immediate, so is $(F^{h(P)}|K,P)$.
By part a) of Lemma~\ref{dec} this yields that $v_{P_1}F^{h(P)}= v_{P_1}K$. Thus $v_{P_1}K\subseteq v_{P_1}
K(x)^{h(P)}\subseteq v_{P_1}F^{h(P)}= v_{P_1}K$, so equality holds everywhere. Together with the equality
following from equation
(\ref{samerf}), this proves that the extension
\[
(F^{h(P)}|K(x)^{h(P)},P_1)
\]
is immediate. We wish to show that this extension is trivial.

We have already noted that $(K,P_1)$ is a separably tame and hence separably defectless field.
Since $xP_1$ is transcendental over $KP_1\,$, we can apply \cite[Theorem~1]{[K8]} to find that
$(K(x),P_1)$ is a separably defectless field. By \cite[Theorem~(18.2)]{[End]},
also $(K(x)^{h(P_1)},P_1)$ is a separably defectless field. We infer from part c) of Lemma~\ref{dec}
that $(K(x)^{h(P)}|K(x)^{h(P_1)},P_1)$ is a tame extension. Hence by \cite[Proposition~2.12]{[K7]}
also $(K(x)^{h(P)},P_1)$ is a separably defectless field.

Since the extension $F|K(x)$ is finite and separable, Lemma~\ref{FhFEh} shows that the same is true for the
extension $(F^{h(P)}|K(x)^{h(P)},P_1)$. As this extension is also immediate and $(K(x)^{h(P)},P_1)$ is a
henselian separably defectless field, it follows that the extension must be trivial, as desired.
\end{proof}

%
%
\subsection{Separably tame base fields of arbitrary rank}                              
Now we are able to prove Theorem~\ref{stt3'} for the case of transcendence degree 1.
\begin{proposition}                          \label{sttarb}
Every immediate separable function field $(F|K,v)$ of transcendence degree 1 over a separably tame
field $(K,v)$ of arbitrary rank is henselian rational.
\end{proposition}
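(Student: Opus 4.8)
The goal is to remove the finite-rank restriction from Proposition~\ref{stt2} by a reduction-to-finite-rank argument. The plan is to reduce the arbitrary-rank case to the finite-rank case already proven, using a compactness/finiteness principle: although $vK$ may have arbitrary (even non-discrete) rank, the henselian rationality statement ``$F\subseteq K(x)^h$ for some transcendental $x$'' is a statement about finitely many elements, and the relevant valuation-theoretic data that obstruct it only involve finitely many convex subgroups of $vK$.

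First I would choose a separating element $x$ of $F|K$, so that $F|K(x)$ is a finite separable extension, and by Lemma~\ref{FhFEh} the extension $(F^{h}|K(x)^{h},v)$ is finite, separable and immediate. The task is to show this extension is trivial. Suppose not; then there is a nontrivial finite immediate separable subextension, and by the Lemma of Ostrowski its degree is a power of $p$. The key reduction is to find a convex subgroup $H$ of $vK$ such that the associated coarsening $\bar v$ (with value group $vK/H$) has finite rank, $(K,\bar v)$ is still separably tame, and the obstruction to triviality already appears over $(K,\bar v)$. Concretely, since any nontriviality of $(F^{h}|K(x)^{h},v)$ is witnessed by finitely many elements (generators of the extension and finitely many coefficients appearing in the defect computation), and each such witness involves only finitely many convex subgroups, one can choose $H$ small enough that the coarsened valuation $\bar v$ has finite rank while still detecting the same failure.

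The main structural tool is the decomposition from Lemma~\ref{dec}: writing $P = \bar v$-part followed by the residual $H$-valuation, I would transfer the problem through the place decomposition. The separably tame hypothesis is preserved under the relevant coarsenings and residue maps by \cite[Lemma~3.14]{[K7]} (as already exploited in the proof of Proposition~\ref{stt2}), so the finite-rank result Proposition~\ref{stt2} applies to the coarsened situation, yielding $F^{\bar h} = K(x)^{\bar h}$ for the coarsened henselizations with a suitable separating $x$. One then lifts this back: using part c) of Lemma~\ref{dec} to control residue fields and part a) to control the value-group part, the triviality over the coarsening forces triviality of $(F^{h}|K(x)^{h},v)$ for the full valuation, since the remaining residual extension is again immediate separable over a henselian separably defectless field and hence trivial.

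The hard part will be the finite-rank reduction itself: making precise the claim that ``only finitely many convex subgroups matter'' and producing the convex subgroup $H$ that simultaneously yields finite rank, preserves separable tameness, and detects the obstruction. Unlike the discretely many convex subgroups of a finite-rank group, an arbitrary value group may have a densely ordered chain of convex subgroups, so the argument must rely on the finiteness of the \emph{data} (finitely many field elements and their values) rather than on finiteness of the rank of $vK$. I expect this to be handled by observing that the value $v(F^{h}:K(x)^{h})$-defect and the finitely many values $v(y-c)$ governing the normal-form constructions all lie in a finitely generated subgroup, whose convex hull in $vK$ has finite rank; coarsening by the complementary convex subgroup then gives the required finite-rank model without losing separable tameness.
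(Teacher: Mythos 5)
Your reduction rests on a finiteness claim that is false, and in fact points in the wrong direction. For the coarsening $\bar v$ associated with a convex subgroup $H$ of $vK$ to have finite rank, there must be only finitely many convex subgroups of $vK$ \emph{above} $H$, and in general no proper $H$ with this property exists: take $vK$ to be the Hahn product over the index set $\Q$ (such groups do occur as value groups of separably tame fields, e.g.\ of maximal Kaplansky fields $k((G))$ with $k$ algebraically closed). Its convex subgroups correspond to the final segments of $\Q$, so every nontrivial coarsening again has infinite, densely ordered rank. Your fallback claim --- that the finitely many relevant values lie in a finitely generated subgroup ``whose convex hull in $vK$ has finite rank'' --- fails in the same example: the convex hull of even a cyclic subgroup of that Hahn product contains a densely ordered chain of convex subgroups. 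The finiteness principle that actually works is field-theoretic rather than group-theoretic: any subfield of $K$ that is algebraic over a finitely generated subfield has value group of finite rational rank (Abhyankar), hence of finite rank. This is what the paper exploits: by \cite[Corollaries~3.8 and~3.16]{[K7]} one descends to a finite-rank separably tame \emph{subfield} $K_0\subseteq K$ together with a function field $F_0|K_0$ such that $F=F_0.K$, $K_0v=Kv$, $vK/vK_0$ is torsion free and $vK_0$ is cofinal in $vF_0$; one proves $F_0^h=K_0(x)^h$ for suitable $x\in F_0$, and then goes back up through the compositum using Lemma~\ref{FhFEh}: $F^h=(F_0^h.K)^h=(K_0(x).K)^h=K(x)^h$.

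There is a second gap: even if a finite-rank coarsening $\bar v$ ``seeing'' the relevant data existed, Proposition~\ref{stt2} would not apply to $(F|K,\bar v)$, because that extension need not be immediate. Part a) of Lemma~\ref{dec} only yields $\bar vF=\bar vK$ together with $w$-immediacy of $F\bar v|K\bar v$ for the residual valuation $w$; the extension $F\bar v|K\bar v$ itself may be transcendental (immediate extensions can be transcendental --- that is the very situation studied in this paper, and it is exactly what the place $P_2$ isolates in the proof of Proposition~\ref{stt2}). You would then be forced to prove henselian rationality, or at least suitable defectlessness statements, for the residual function field $(F\bar v|K\bar v,w)$, whose rank is again arbitrary, so the reduction is circular. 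Note finally that the paper's subfield descent must confront an issue your outline never meets: $(F_0|K_0,v)$ need not be immediate (only $vF_0/vK_0$ torsion free is guaranteed). In the non-immediate case the paper shows $vF_0=vK_0\oplus\Z vx$ for a separating element $x$ and invokes the Generalized Stability Theorem \cite[Theorem~1]{[K8]} to conclude that $(K_0(x),v)$ is separably defectless, which forces $F_0^h=K_0(x)^h$; nothing in your proposal supplies a substitute for this step.
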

\begin{proof}
According to \cite[Corollaries~3.8 and~3.16]{[K7]} there exists a separably tame
subfield $K_0$ of $K$ of finite rank and a function field $F_0$ of transcendence degree 1 over $K_0$ with
$K_0v=Kv$ and $vK/vK_0$ torsion free, such that $F=F_0.K$ and that $vK_0$ is cofinal
in $vF_0$; since $F|K$ is assumed to be separable, we may also
assume $F_0|K_0$ to be separable. If we are able to show that
\begin{equation}                                      \label{as}
F_0^h \>=\> K_0(x)^h
\end{equation}
for some $x\in F_0\subseteq F$, then it will follow that
\[
F^h \>=\> (F_0.K)^h \>=\> (F_0^h.K)^h \>=\> (K_0(x)^h.K)^h \>=\> (K_0(x).K)^h \>=\> K(x)^h\>,
\]
and our proposition will be proved.

\pars
If $(F_0|K_0,v)$ is immediate, then the existence of $x\in F_0$ satisfying (\ref{as}) follows
from Proposition~\ref{stt2}. 
Let us assume now that $(F_0|K_0,v)$ is not immediate.

We have:
\[
K_0v \>\subseteq\> F_0v \>\subseteq\> Fv \>=\> Kv \>=\> K_0v\>,
\]
so equality holds everywhere. In particular we have that $K_0v=F_0v$ and thus $vK_0\ne vF_0$ by our assumption.
Since $vK/vK_0$ is
torsion free and $vF_0\subseteq vF=vK$, also $vF_0/vK_0$ is torsion free. Therefore, $\trdeg F_0|K_0 =1$ is equal
to the rational rank of $vF_0/vK_0$ and we can employ \cite[Corollary~2.3]{[K7]} to obtain that the torsion free
group $vF_0/vK_0$ is finitely generated. It follows that
\[
vF_0 \>=\> vK_0 \oplus \Z vx \>=\> vK_0(x)
\]
for a suitable $x\in F_0\,$. By Lemma~\ref{xysep} applied with $v_Q=v$, we may choose $x$ to
be a separating element of $F_0|K_0\,$. As $K_0v\subseteq K_0(x)v\subseteq F_0v \subseteq K_0v$, equality holds
everywhere. Therefore, $(F_0|K_0(x),v)$ is a finite immediate and separable extension. By Lemma~\ref{FhFEh},
the same holds for $F_0^h|K_0(x)^h$. According to \cite[Theorem 1]{[K8]}, $(K_0(x),v)$ is a separably
defectless field, and the same is true for $(K_0(x)^h,v)$ by \cite[Theorem~(18.2)]{[End]}. Hence the
extension $F_0^h|K_0(x)^h$ must be trivial, so (\ref{as}) holds.
\end{proof}

%
%
\subsection{The case of transcendence degree $>1$}                              
It remains to deduce the second assertion of Theorem~\ref{stt3'} from the first. We will need the following result.
\begin{proposition}                               \label{hrdown}
Take a valued function field $(F|K,v)$ of arbitrary transcendence degree. If for some algebraic extension $K'|K$
the valued function field $(F.K'|K',v)$ is henselian rational, then there is a finite subextension $L|K$ of $K'|K$
such that $(F.L|L,v)$ is henselian rational. The same holds if $L$ is replaced by any of its algebraic extensions.
\end{proposition}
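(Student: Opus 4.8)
The plan is to exploit the finitary character of henselization together with the finite generation of everything in sight. Fix once and for all the extension of $v$ to $\tilde F$, so that all henselizations below are taken inside $(\tilde F,v)$, where each subfield has a unique henselization. Let $\cT=\{t_1,\dots,t_d\}\subset F.K'$ be a transcendence basis witnessing the henselian rationality of $(F.K'|K',v)$, so that $F.K'\subseteq K'(\cT)^h$. Here $d=\trdeg F|K$ is finite, since $F|K$ is a function field, and it is left unchanged by the algebraic base extensions $K'|K$, $L|K$ (an algebraic extension of the base preserves both transcendence degree and algebraic independence of $\cT$).

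The key step, which I expect to carry the real content, is the identity
\[
K'(\cT)^h \>=\> \bigcup_{L} L(\cT)^h\>,
\]
the union ranging over all finite subextensions $L|K$ of $K'|K$. The inclusion $\supseteq$ is immediate: each $L(\cT)\subseteq K'(\cT)$ forces $L(\cT)^h\subseteq K'(\cT)^h$. For $\subseteq$ I would argue that the right-hand side $U$ is itself a henselian field. Indeed $U$ is a field (a directed union, since $L\subseteq L'$ gives $L(\cT)^h\subseteq L'(\cT)^h$), and it is henselian because Hensel's Lemma concerns a polynomial with only finitely many coefficients, each of which already lies in some $L(\cT)^h$ by directedness, so the relevant root may be produced there. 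As $U$ is a henselian subfield of $(\tilde F,v)$ containing $K'(\cT)=\bigcup_L L(\cT)$, and $K'(\cT)^h$ is by minimality the smallest such subfield, the reverse inclusion follows.

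With this identity in hand the conclusion is a compactness argument. Write $F=K(a_1,\dots,a_m)$. Then each $a_i$ lies in $F.K'\subseteq K'(\cT)^h=\bigcup_L L(\cT)^h$, while each $t_j$ lies in $F.K'=\bigcup_L F.L$. Since these are finitely many elements and both unions are directed, I can choose a single finite subextension $L|K$ of $K'|K$ simultaneously satisfying $a_1,\dots,a_m\in L(\cT)^h$ and $\cT\subseteq F.L$. From the former, $F.L=L(a_1,\dots,a_m)\subseteq L(\cT)^h$, since $L\subseteq L(\cT)^h$ as well. Finally, $\cT\subseteq F.L$ is algebraically independent over $K'\supseteq L$ and has cardinality $d=\trdeg F.L|L$, so it is a transcendence basis of $F.L|L$. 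Hence $(F.L|L,v)$ is henselian rational, as required. Apart from the directed-union identity for henselizations, every step is routine bookkeeping with finitely generated data.
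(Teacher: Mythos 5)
Your proof is correct, and it takes a genuinely different route from the paper's. The paper does not use your directed-union identity; instead it appeals to the henselian element theorem of \cite{[KN]}: since $F.K'|K'(\cT)$ is a finite extension inside $K'(\cT)^h$, it is generated by a henselian element $\eta$, whose minimal polynomial $h$ over $K'(\cT)$ has coefficients in the valuation ring and satisfies $vh'(\eta)=0$; the finitely many coefficients of $h$ descend to a finite subextension $K_2|K$ of $K'|K$, and Hensel's Lemma (uniqueness of the root with residue $\eta v$) then forces $\eta\in K_2(\cT)^h$, after which a bookkeeping step much like yours produces a finite $L$ with $F.L=L(\cT)(\eta)\subseteq L(\cT)^h$. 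You replace all of this by the observation that henselization commutes with directed unions: $\bigcup_L L(\cT)^h$ is henselian because any instance of Hensel's Lemma involves only finitely many field elements, and it contains $K'(\cT)$, hence equals $K'(\cT)^h$ by minimality of henselizations inside $(\tilde F,v)$; you then descend the finitely many generators $a_1,\ldots,a_m$ of $F|K$ individually instead of a single primitive element. Both arguments are sound. Yours is more elementary and self-contained, avoiding the citation to \cite{[KN]} entirely; the paper's yields more explicit information, namely that $F.L$ is generated over $L(\cT)$ by one henselian element whose minimal polynomial is a concrete Hensel's-Lemma witness for the inclusion $F.L\subseteq L(\cT)^h$, in the spirit of the ``henselian elements'' machinery. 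One step you should spell out, though it is standard and is used freely in the paper (for instance in the proofs of Lemma~\ref{FhFEh} and Lemma~\ref{dec}): the minimality property that every henselian subfield $M$ of $(\tilde F,v)$ containing a subfield $E$ contains $E^h$. This holds because the image of the canonical valuation-preserving embedding $E^h\to M$ is again a henselization of $(E,v)$ lying inside $(\tilde F,v)$, hence coincides with $E^h$ by the uniqueness of the henselization within $(\tilde F,v)$; both inclusions of your identity, as well as the final conclusion that $F.L$ lies in $L(\cT)^h$, rest on this fact.
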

\begin{proof}
Take a transcendence basis $\cT$ of $F.K'|K'$ such that $F.K'\subset K'(\cT)^h$. Since $F|K$ is a function field,
$F$ is generated over $K$ by a finite set $S$ of elements in $F$. As $K'$ is the union over finite extensions $L$
of $K$ contained in $K'$, we also know that $K'(\cT)^h=K'.K(\cT)^h$ is the union over all $L.K(\cT)^h=L(\cT)^h$.
Hence there is a finite subextension $L|K$ of $K'|K$ such that $\cT\subset F.L$ and $S\subset L(\cT)^h$. The
latter implies that $F.L=K(S).L=L(S)\subset L(\cT)^h$. Hence $(F.L|L,v)$ is henselian rational. This
remains true if $L$ is replaced by any of its algebraic extensions, as also the assertions $\cT\subset F.L$ and $S\subset L(\cT)^h$ remain true.
\end{proof}

Note that for $L$ as in the assertion of the proposition, $(F.L|L,v)$ is not necessarily immediate. However, if
$(vF:vK)$ is finite and not divisible by the residue characteristic, and $Fv|Kv$ is finite and separable, then
Hensel's Lemma can be used to show that $L$ can be chosen so that in addition, $(F.L|L,v)$ is immediate.

\parm
Our proof of the second assertion of Theorem~\ref{stt3'} will be done by
induction on the transcendence degree of $F|K$. Assume that $\trdeg F|K=n+1$ with $n\geq 1$ and that the
assertion is proved for every transcendence degree $\leq n$. Take a separating transcendence basis
$\{t_1,\ldots ,t_n,t\}$ of $F|K$.

Assume that $(N,v)$ is a separable immediate extension of $(F,v)$ which is a separably tame field (with
$N|F$ not necessarily being algebraic). Note that $N$ is separable over $K(t_1,\ldots ,t_n,t)$ and hence
also over $K(t_1,\ldots ,t_n)$.
%
%
We take $N'$ to be the relative algebraic closure of $K(t_1,\ldots ,t_n)$ within $N$. As a subextension of
$N|K(t_1,\ldots ,t_n)$, also $N'|K(t_1,\ldots ,t_n)$ is separable, and the same holds for $N'|K$.

Since $(N|K,v)$ and thus also $(N|K(t_1,\ldots ,t_n),v)$ are immediate, it follows from
\cite[Lemma~3.15]{[K7]} that $(N',v)$ is a separably tame field. Further, $(F.N'|N',v)$
is a separable immediate function field of transcendence degree 1. By Proposition~\ref{sttarb},
$(F.N'|N',v)$ is henselian rational.

By Proposition~\ref{hrdown}, there exists a finite extension $L$ of $K(t_1,\ldots ,t_n)$ within $N'$ such that
$(F.L|L,v)$ is henselian rational and the same is true if $L$ is replaced by any of its algebraic extensions.
As the algebraic extension $N'|K(t_1,\ldots ,t_n)$
is separable, the same holds for $N'|L\,$. As a subextension of $N'|K$, also $L|K$ is separable.
Therefore, $(L|K,v)$ is a separable immediate function field of transcendence degree $n$, and
by induction hypothesis it admits a finite extension $L_1$ of $L$ within $N'$ such that
$L_1^h = K(x_1,\ldots ,x_n)^h$ for suitable elements $x_1,\ldots ,x_n\in L_1$. Since also
$(F.L_1|L_1,v)$ is henselian rational, we can write $(F.L_1)^h=L_1(x)^h$ for some $x\in F.L_1\,$.
For $F_1= F.L_1$ it follows that $x_1,\ldots ,x_n,x\in F_1$ and
\[
F_1 = F.L_1 \subset L_1(x)^h = (L_1^h (x))^h = (K(x_1,\ldots ,x_n)^h (x))^h = K(x_1,\ldots ,x_n,x)^h\>,
\]
which shows that $(F_1|K,v)$ is henselian rational.

On the other hand, since $L_1|K(t_1,\ldots ,t_n)$ is finite, $F_1|F$ is a finite subextension of the
immediate separable extension $N|F$. Hence $F_1|F$ is also
separable and immediate. This completes our proof of Theorem~\ref{stt3'}.

%
%
\subsection{Proof of Theorem~\ref{stt3}}           \label{sectprstt3}
Since a tame field is always perfect, the first assertion of Theorem~\ref{stt3'} implies the first assertion of
Theorem~\ref{stt3}. For the proof of the second assertion, we need a slight improvement of Lemma~3.15 of
\cite{[K7]}. We take the occasion to prove a bit more than we will need.
\begin{lemma}
Let $(N,v)$ be a separably tame field and $L\subset N$ a
relatively separable-algebraically closed subfield of $N$. If the residue field
extension $Nv|Lv$ is algebraic, then $(L,v)$ is also a separably
tame field and moreover, $vN/vL$ is torsion free and $Nv = Lv$.
\end{lemma}
\begin{proof}
Denote by $N_0$ the relative algebraic closure of $L$ in $N$. Then also the residue field
extension $Nv|N_0v$ is algebraic. Hence by Lemma~3.15 of \cite{[K7]}, $(N_0,v)$ is a
separably tame field, $vN/vN_0$ is torsion free, and $Nv=N_0v$. By Lemma~3.13 of \cite{[K7]} it follows that
$(N_0^{1/p^{\infty}},v)$ is a tame field. Since the algebraic extension $N_0|L$ is purely inseparable, we have that
$N_0^{1/p^{\infty}}=L^{1/p^{\infty}}$. Using Lemma~3.13 of \cite{[K7]} again, we deduce that $(L,v)$ is a
separably tame field. From the same lemma we also obtain that $(L,v)$ is dense in $(L^{1/p^{\infty}},v)$
and hence also in $(N_0,v)$. Therefore, $vN_0=vL$ and $N_0v=Lv$, showing that $vN/vL$ is torsion free and $Nv=Lv$.
\end{proof}

Now take any immediate function field $(F,v)$ over the tame
field $(K,v)$, and an immediate extension $(N,v)$ of $(F,v)$ which is a tame field. Then in particular, $(N,v)$ is
a separably tame field. Denote by $L$ the relative separable-algebraic closure of $F$ in $N$. Since $(N,v)$ is
immediate over $(F,v)$, it is also immediate over $(L,v)$, and it follows from the above lemma that $(L,v)$ is
a separably tame field. By construction, $(L,v)$ is a separable immediate extension of $(F,v)$, so
Theorem~\ref{stt3'} proves the existence of a finite immediate extension $(F_1,v)$ of $(F,v)$ within $L$ and hence also within $N$ such that $(F_1|K,v)$ is henselian rational.

%
%
\subsection{Proof of Proposition~\ref{N}}
If $\chara Kv=0$, then also $\chara F^h v=0$ and $(F^h,v)$ is a tame field. So it remains to treat the case of
$\chara F^h v=p>0$.
\pars
Assume first that $(K,v)$ is a tame field and $(F|K,v)$ is an immediate extension. By \cite[Theorem~3.2]{[K7]},
$vK = vF$ is $p$-divisible and $Kv = Fv$ is perfect. Let $(N,v)$ be a maximal immediate algebraic extension of
$(F,v)$. Then $(N,v)$ is algebraically maximal, $vN$ is $p$-divisible and $Nv$ is perfect.
Again from \cite[Theorem~3.2]{[K7]} it follows that $(N,v)$ is a tame field.
\pars
Assume now that $(K,v)$ is a separably tame field. In view of what we have proved already, we may assume that
$(K,v)$ is not a tame field. This implies that $\chara K =p>0$.
Assume that $(F|K,v)$ is an immediate separable extension. If $v$ is
trivial on $K$, then $F=K$ since $(F|K,v)$ is immediate. In this case we can just set $N=K$.
So we may assume that $v$ is nontrivial on $K$. Then by \cite[Theorem~3.10]{[K7]}, $vK = vF$ is
$p$-divisible and $Kv = Fv$ is perfect. Let $(N,v)$ be a maximal immediate separable algebraic extension of
$(F,v)$. Then $(N,v)$ is separable-algebraically maximal, $vN$ is $p$-divisible and $Nv$ is perfect.
Again from \cite[Theorem~3.10]{[K7]} it follows that $(N,v)$ is a separably tame field.
\qed

%
%
\section{Proof of Theorem~\ref{stt4}}                \label{sectvalg}
Take a valued function field $(F|K,v)$ such that $vF/vK$ is a torsion
group and $Fv|Kv$ is algebraic. We extend $v$ to the algebraic closure of $F$. The value group $v\tilde{F}$ is
the divisible hull of $vF$ and $v\tilde{K}$ is the divisible hull of $vK$, so they must be equal. Likewise,
$\tilde{F}v$ is the algebraic closure of $Fv$ and $\tilde{K}v$ is the algebraic closure of $Kv$, so they too must
be equal. Therefore, $v\tilde{K}\subseteq v(F.\tilde{K})\subseteq v\tilde{F}=v\tilde{K}$ and
$\tilde{K}v\subseteq v(F.\tilde{K})v\subseteq \tilde{F}v=\tilde{K}v$, so equality holds everywhere.
This shows that $(F.\tilde{K}|\tilde{K},v)$ is an immediate function field and we can apply Theorem~\ref{stt3}.
In particular, if $\trdeg F|K=1$, then $(F.\tilde{K}|\tilde{K},v)$ is henselian rational.
Now an application of Proposition~\ref{hrdown} completes the proof of Theorem~\ref{stt4}.

\end{document}